\documentclass[reqno]{amsart}
\usepackage{fullpage}
\usepackage{array}
\usepackage{mathrsfs}
\usepackage{multirow}
\usepackage{hyperref}
\usepackage{graphicx,color}
\usepackage{float}
\usepackage{ytableau}

\newcommand{\varied}

\newcommand{\red}{\color{red}}

\usepackage{comment}

\newcommand{\Id}{\textnormal{Id}}

\usepackage{longtable}

\newtheorem{theorem}{Theorem}[section]
\newtheorem{example}[theorem]{Example}

\newtheorem{lemma}[theorem]{Lemma}
\newtheorem{corollary}[theorem]{Corollary}

\newtheorem{proposition}[theorem]{Proposition}

\newcolumntype{C}[1]{>{\centering\let\newline\\\arraybackslash\hspace{0pt}}m{#1}}

\usepackage[utf8]{inputenc}
\usepackage{amsfonts}
\usepackage{amssymb}
\usepackage{verbatim}
\usepackage{booktabs}
\usepackage{comment}

\numberwithin{equation}{section}

\begin{document}
\title{Algebras with Bounded Multiplicities of the \\ Central Cocharacters}

\author{Wesley Q. Cota$^{1,a,*}$, Luiz H. S. Matos$^{2,b}$ and Breno O. Souza$^{3,b}$}

\thanks{%
  \rightskip=0pt plus 1fil\relax
  \textit{E-mail addresses:}
  wesleyqc@ime.usp.br~(Cota),
  luizmatos@ufmg.br~(Matos), breno55@ufmg.br~(Souza).
}

\thanks{\footnotesize $^{1}$ Supported by FAPESP, grant nos.~2024/14914-9, 2025/05699-0 and 2026/06588-0}

\thanks{\footnotesize $^{2}$ Partially supported by FAPEMIG}

\thanks{\footnotesize $^3$ Partially supported by CAPES}

\thanks{\footnotesize $^{*}$ Corresponding author}

\subjclass[2020]{Primary 16R10; Secondary 20C30}

\keywords{%
  \rightskip=0pt plus 1fil\relax
  Polynomial identities, central polynomials,
  central cocharacters, bounded multiplicities.
}

\dedicatory{$^{a}$ IME, USP, Rua do Matão 1010, 05508-090, São Paulo, Brazil \\ $^b$ ICEx, UFMG, Avenida Antonio Carlos 6627, 31123-970, Belo Horizonte, Brazil}

\begin{abstract} Let $A$ be an associative PI-algebra over a field of characteristic zero. The central polynomials of $A$ give rise to a sequence $\{\chi_n^z(A)\}_{n\geq 1}$ of $S_n$-characters, called its central cocharacters. A natural problem is to understand which algebraic properties of $A$ are imposed by bounds on the multiplicities occurring in the decompositions of these cocharacters into irreducible characters. We prove that these multiplicities are bounded by one if and only if there exist scalars $(\alpha,\beta)\neq(0,0)$ such that $\alpha[x_1,x_2]x_1+\beta x_1[x_1,x_2]$ takes values in the center of $A$. This provides a central analogue of the Ananin-Kemer theorem for ordinary cocharacters. We also characterize the algebras whose central cocharacter multiplicities are uniformly bounded.

\end{abstract}

\maketitle

\section{Introduction}

The symmetric group acts on multilinear polynomials by permuting their variables and this action induces an action on the quotients modulo polynomial identities or central polynomials of an associative algebra. This construction connects the study of these polynomials
with the representation theory of symmetric groups. Whereas
codimensions record only the dimensions of the resulting quotient
spaces, cocharacters describe their irreducible constituents and
the multiplicities with which they occur. The cocharacter sequence
thus provides a finer invariant, capturing the representation-theoretic
structure of the multilinear quotients. Bounds on multiplicities
impose restrictions on this structure that dimensions alone do not
determine. Of particular interest is the case in which each irreducible constituent appears at most once in each degree. A natural question is whether a polynomial relation of low
degree can force this condition in every degree.

We work over a field $F$ of characteristic zero and consider
associative $F$-algebras, not necessarily with an identity element.
For an algebra $A$, we denote by $\Id(A)$ the set of its polynomial
identities and by $\Id^z(A)$ the set of polynomials whose evaluations
always lie in the center $Z(A)$ of $A$. In particular,
$\Id(A)\subseteq\Id^z(A)$. In characteristic zero, multilinearization
shows that these spaces are generated by their multilinear elements
as a $T$-ideal and a $T$-space, respectively. Their multilinear
components therefore determine all polynomial identities and central
polynomials of $A$, making the associated representations fundamental
to the study of both.

Let $P_n$ be the space of multilinear polynomials in
$x_1,\ldots,x_n$. The corresponding quotient $S_n$-modules are
\[
P_n(A)=\frac{P_n}{P_n\cap\Id(A)}
\qquad\text{and}\qquad
P_n^z(A)=\frac{P_n}{P_n\cap\Id^z(A)}.
\]
Their characters, called the ordinary and central cocharacters,
respectively, decompose as
\[
\chi_n(A)=\sum_{\lambda\vdash n}m_\lambda(A)\chi_\lambda,
\qquad
\chi_n^z(A)=\sum_{\lambda\vdash n}m_\lambda^z(A)\chi_\lambda,
\]
where $\chi_\lambda$ is the irreducible $S_n$-character indexed by
the partition $\lambda$ of $n$ and $m_\lambda(A)$ and $m_\lambda^z(A)$
are the ordinary and central multiplicities. We use analogous
notation for a variety $\mathcal V$. A family of multiplicities
is uniformly bounded if it admits a bound independent of both
$n$ and $\lambda\vdash n$.

A fundamental result in this setting was established by Ananin
and Kemer \cite{AnaninKemer1976} in 1976. They proved that all
ordinary cocharacter multiplicities of a variety $\mathcal V$
are bounded by one if and only if
\[
\alpha[x_1,x_2]x_1+\beta x_1[x_1,x_2]\in\Id(\mathcal V)
\]
for some $\alpha,\beta\in F$, not both zero. Their theorem also
identifies this condition with the distributivity of the lattice
of subvarieties of $\mathcal V$. It thus connects this identity with both a representation-theoretic restriction in every degree
and a structural property of the subvariety lattice.

For arbitrary uniform bounds, Mishchenko, Regev and Zaicev
\cite{MishchenkoRegevZaicev1999} proved that the ordinary
multiplicities of $\mathcal V$ are bounded if and only if
$UT_2\notin\mathcal V$, where $UT_2$ denotes the algebra of
$2\times2$ upper triangular matrices over $F$. More recently,
Giambruno, Quintino and Vieira
\cite{GiambrunoQuintinoVieira2026} classified, up to PI-equivalence,
the algebras whose ordinary cocharacter multiplicities are bounded
by one.

The passage to central polynomials introduces a substantial
obstacle. Whereas $\Id(A)$ is a $T$-ideal, $\Id^z(A)$ is generally
only a $T$-space: it is stable under endomorphisms of the free
algebra, but need not be stable under multiplication. In particular,
a central polynomial $f$ need not remain central after multiplication
by an additional variable. Arguments that derive higher-degree
consequences by multiplying identities therefore do not carry over
directly. Extending the Ananin--Kemer theorem to central cocharacters
requires a different way to obtain sufficient control in all degrees
from a cubic centrality condition.

We first characterize varieties with uniformly bounded central
multiplicities. We prove that these multiplicities are uniformly bounded
if and only if $UT_2\notin\mathcal V$. Combined with the theorem
of Mishchenko, Regev and Zaicev \cite{MishchenkoRegevZaicev1999}, this shows that central
multiplicities are uniformly bounded exactly when ordinary
multiplicities are uniformly bounded. We also obtain equivalent
characterizations in terms of the partitions occurring in the
central cocharacters and the existence of specific central
polynomials.

Our main result establishes the analogue of the Ananin--Kemer theorem in the central polynomials setting.

\begin{theorem}
Let $\mathcal V$ be a variety. Then
$m_\lambda^z(\mathcal V)\leq1$ for every $n\geq1$ and every
$\lambda\vdash n$ if and only if
$\alpha[x_1,x_2]x_1+\beta x_1[x_1,x_2]\in\Id^z(\mathcal V)$ for some $\alpha,\beta\in F$, not both zero.
\end{theorem}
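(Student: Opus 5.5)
For the necessity direction, I would look at degree $3$ and the partition $\lambda=(2,1)$. In the free algebra, $\chi_{(2,1)}$ occurs in $P_3$ with multiplicity $2$. The corresponding highest weight vectors are the linearizations of the two polynomials $[x_1,x_2]x_1$ and $x_1[x_1,x_2]$; these span the $(2,1)$ highest weight space of $P_3$ modulo the symmetric parts.

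More precisely, the highest weight vectors for $(2,1)$ in the free algebra $F\langle x_1,x_2\rangle$ of bidegree $(2,1)$ form the span of $x_1^2x_2-x_1x_2x_1$ and $x_1x_2x_1-x_2x_1^2$, equivalently of $x_1[x_1,x_2]$ and $[x_1,x_2]x_1$. If $m^z_{(2,1)}(\mathcal V)\le 1$, then modulo $\Id^z(\mathcal V)$ this two-dimensional space has dimension at most $1$. Hence a nontrivial combination $\alpha[x_1,x_2]x_1+\beta x_1[x_1,x_2]$ lies in $\Id^z(\mathcal V)$. Here I use the standard fact, valid for $T$-spaces over characteristic zero, that $m^z_\lambda$ equals the dimension of the space of highest weight vectors of weight $\lambda$ modulo $\Id^z$.

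For sufficiency, suppose $f=\alpha[x_1,x_2]x_1+\beta x_1[x_1,x_2]\in\Id^z(\mathcal V)$. By the highest weight vector description, it suffices to show that for every $\lambda\vdash n$ with columns of heights $h_1,\dots,h_{\lambda_1}$, all polynomials $\prod_j \mathrm{St}_{h_j}(x_1,\dots,x_{h_j})$, with the column factors in arbitrary orders, are proportional modulo $\Id^z(\mathcal V)$. These are the standard alternating products, and they span the $\lambda$-highest weight space after symmetrization over the positions of the variables.

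I would split into cases.

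In the first case, if $\mathcal V$ has a nonzero ordinary multiplicity bound issue, I first dispose of the subcase where $f$ or its consequences give the Ananin–Kemer condition. Any polynomial identity of the given form reduces to their theorem, because $m^z_\lambda\le m_\lambda$; this inequality holds since $P_n^z$ is a quotient of $P_n(A)$.

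In the general case, $f$ is only central. I would then derive consequences that are honest identities. Since $f$ is central, $[f,x_3]\in\Id(\mathcal V)$. Also, for central $c$ and any $g$, the commutators $[g,c]$ vanish. Substituting $x_1\mapsto x_1+x_3$ and similar linearizations of $f$, and combining these with the resulting commutator identities, I expect to reach one of two conclusions. Either $\mathcal V$ satisfies an identity forcing its ordinary multiplicities to be $\le 1$ in degrees $\ge 4$, or the central image of $f$ absorbs the $\lambda$-alternating products, giving a uniform rewriting rule. Concretely, $f$ lets one move a letter past a commutator modulo central polynomials: $x_1[x_1,x_2]\equiv -\tfrac{\alpha}{\beta}[x_1,x_2]x_1$ when $\beta\ne0$. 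Iterating this, one should be able to normalize any highest weight product of shape $\lambda$, modulo $\Id^z$, to a single canonical ordering.

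The main obstacle is that $\Id^z$ is not closed under multiplication, so I cannot multiply $f$ by further variables. To get around this, I would aim to show, for $n\ge4$, that $x_0 f$ lies in $\Id^z(\mathcal V)$ modulo $\Id(\mathcal V)$-terms. I would try to obtain this from the identity $[f,x_0]\in\Id$ together with the Jacobi-type expansion $[x_1,x_2]x_1x_0=[x_1,x_2][x_1,x_0]+[x_1,x_2]x_0x_1$. The aim is to show that higher-degree products of shape $\lambda$ reduce either to genuine identities, handled by the Ananin–Kemer argument, or to central evaluations proportional to a single one. The boundary cases $\lambda=(n)$ and $\lambda=(1^n)$ are trivial, since in those cases $m_\lambda\le1$ already in the free algebra.
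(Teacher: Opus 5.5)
Your necessity direction is correct; it is the paper's degree-three argument written in terms of highest weight vectors. The sufficiency direction has a genuine gap. You give no mechanism that controls $\Id^z(\mathcal V)$ in degrees $n\geq4$, and the one concrete step you propose is false. You aim to show $x_0f\in\Id^z(\mathcal V)$; adding ``modulo $\Id(\mathcal V)$-terms'' changes nothing, since $\Id(\mathcal V)\subseteq\Id^z(\mathcal V)$. Take $\mathcal V=\operatorname{var}(\mathcal G)$. Every $f=\alpha[x_1,x_2]x_1+\beta x_1[x_1,x_2]$ is central there, because $f\equiv\frac{\alpha+\beta}{2}[x_1^2,x_2]$ modulo $\langle[x_1,x_2,x_3]\rangle_T$. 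But $x_0f$ evaluated at $x_0=e_3$, $x_1=1+e_1$, $x_2=e_2$ equals $2(\alpha+\beta)e_3e_1e_2$, which is odd and hence not central when $\alpha+\beta\neq0$. So the rewriting rule $x_1[x_1,x_2]\equiv-\frac{\alpha}{\beta}[x_1,x_2]x_1$ cannot be applied inside longer products. That is exactly the obstacle you named, and the proposed ``normalization to a canonical ordering'' is unsupported.

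The other ingredients do not fill the hole either. The Ananin--Kemer subcase only covers varieties that already satisfy an identity of that form. The first branch of your dichotomy cannot be the general mechanism: $\mathcal G\oplus A_1^*$ has $x_1[x_1,x_2]$ central, yet $m_{(n-1,1)}(\mathcal G\oplus A_1^*)=2$ for all $n\geq3$. Finally, reducing to the products $\prod_j\mathrm{St}_{h_j}$ with permuted columns is not a valid reduction, because these do not span the highest weight space. For $\lambda=(2,2)$ the only such product is $[x_1,x_2]^2$, while $\chi_{(2,2)}$ occurs twice in $P_4$.

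The missing idea is to extract a few central consequences of degree at most four. You then compare $\mathcal V$ with an explicit algebra whose cocharacters are multiplicity-free, so that no central polynomial ever has to be multiplied. The paper splits by $(\alpha,\beta)$. It works with substitutions of $f$ and with the honest identity $[f,x_4]$, inside whose $T$-ideal products are allowed. It obtains the following:
\begin{itemize}
\item $[x_1,x_2][x_3,x_4]$ central when $\alpha=0$, and symmetrically when $\beta=0$;
\item $[x_1,x_2x_3x_4]$ and $x_1[x_2,x_3,x_4]$ central when $\alpha=\beta$;
\item $[x_1,x_2,x_3]$ central when $\alpha=-\beta$, whence $T^{(3)}\subseteq\Id^z$ via $T^{(2)}T^{(3)}+T^{(3)}T^{(2)}\subseteq T^{(4)}$;
\item $[x_1,x_2x_3x_4]$ and $[x_1,x_2,x_3]x_4$ central in the generic case. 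This comes from the $FS_4$-module equality $FS_4\,q_t(x_1,x_2x_3,x_4)=FS_4[x_1,x_2x_3x_4]$, which needs $t\notin\{0,\pm1\}$.
\end{itemize}
These give $P_n\cap\Id^z(B)\subseteq P_n\cap\Id^z(\mathcal V)$, or $P_n\cap\Id(\mathcal G)\subseteq P_n\cap\Id^z(\mathcal V)$, in the relevant degrees. Here $B\in\{\mathcal G,\mathcal G\oplus A_1^*,\mathcal G\oplus A_1\}$ has explicitly computed multiplicity-free central (or, for $\mathcal G$, ordinary) cocharacters, so $m^z_\lambda(\mathcal V)\leq1$. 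Without some such degree-four control and a reference algebra, your sketch does not reach degrees $n\geq4$.
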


This condition is equivalent to
$m_{(2,1)}^z(\mathcal V)\leq1$. Thus a restriction in degree three already forces the multiplicity-one condition in every degree. The proof treats separately the possible relations between $\alpha$ and $\beta$. The infinite-dimensional Grassmann algebra, either alone or in suitable direct sums, provides the comparison cocharacters needed to establish the multiplicity bounds.

The paper is organized as follows. Section~2 recalls the background
on ordinary and central codimensions and cocharacters. Section~3
gives equivalent characterizations of varieties with uniformly
bounded central multiplicities. Section~4 studies the algebras used
in the proofs and determines their relevant central $T$-spaces
and cocharacters. Section~5 proves the main theorem on central
multiplicities bounded by one.

\section{Preliminaries}

Throughout the paper, $F$ denotes a field of characteristic zero and all algebras are associative $F$-algebras, not necessarily with an identity element. Let $X=\{x_1,x_2,\ldots\}$ be a countable
set of variables. We denote by $F\langle X\rangle$ the free associative algebra without identity over $F$ generated by $X$.

A polynomial
$f=f(x_1,\ldots,x_n)\in F\langle X\rangle$ is called a \emph{central polynomial} of $A$ if $f(a_1,\ldots,a_n)\in Z(A)$ for all $a_1,\ldots,a_n\in A$, where $Z(A)$ is the center of $A$. If $f(a_1,\ldots,a_n)=0$ for all
$a_1,\ldots,a_n\in A$, then $f$ is a polynomial identity of $A$.
A central polynomial is called a \emph{proper central polynomial} if it is
not a polynomial identity. An algebra satisfying a nonzero polynomial
identity is called a \emph{PI-algebra}.

The sets of central polynomials and polynomial identities of $A$ are
denoted, respectively, by $\Id^z(A)$ and $\Id(A)$.
The space $\Id^z(A)$ is a $T$-space, that is, a vector subspace of
$F\langle X\rangle$ invariant under all endomorphisms of the free
algebra. Moreover, $\Id(A)$ is a $T$-ideal, meaning that it is both
a $T$-space and a two-sided ideal of $F\langle X\rangle$.
For a set $S\subseteq F\langle X\rangle$, we denote by
$\langle S\rangle_T$ the $T$-ideal generated by $S$ and $\langle S\rangle^T$ the $T$-space generated by $S$. A polynomial $f$
is called a consequence of $S$ in the $T$-ideal sense
if
$f\in\langle S\rangle_T$ and in the $T$-space sense if
$f\in\langle S\rangle^T$.

For every $f=f(x_1,\ldots,x_n)\in F\langle X\rangle$, we have $f\in\operatorname{Id}^z(A)$ if and only if $[f,x_{n+1}]\in\operatorname{Id}(A).$ In particular, the $T$-space of central polynomials is determined by the $T$-ideal of polynomial identities.

Kemer \cite{Kemer1988} proved that every $T$-ideal over a field of
characteristic zero is finitely generated as a $T$-ideal. Later,
Shchigolev \cite{Shchigolev2001} established the corresponding finite
basis property for $T$-spaces. Furthermore, since $F$ has characteristic
zero, the multilinearization process shows that both $\Id(A)$
and $\Id^z(A)$ are generated by their multilinear elements.

For each $n\geq1$, let $P_n$ be the space of multilinear polynomials
in the variables $x_1,\ldots,x_n$, namely,
\[
P_n=\operatorname{span}_F
\{x_{\sigma(1)}x_{\sigma(2)}\cdots x_{\sigma(n)}
\mid \sigma\in S_n\}.
\]

We write $[x_1,x_2]=x_1x_2-x_2x_1$ and define, for $k\geq 3$, $[x_1,\ldots,x_k]=[[x_1,\ldots,x_{k-1}],x_k].$ Moreover, the standard polynomial of degree $n$ is defined by $$St_n=St_n(x_1, \ldots, x_n)=\sum_{\sigma \in S_n} (\mbox{sgn }\sigma )x_{\sigma(1)}\ldots x_{\sigma(n)}.$$

To investigate the asymptotic behavior of the polynomial identities,
central polynomials and proper central polynomials of $A$,
Regev \cite{Regev1972,Regev2016} introduced the following quotient spaces:
\[
P_n(A)=\frac{P_n}{P_n\cap\Id(A)},\qquad
P_n^z(A)=\frac{P_n}{P_n\cap\Id^z(A)},\qquad
P_n^\delta(A)=
\frac{P_n\cap\Id^z(A)}{P_n\cap\Id(A)}.
\]
Their dimensions are denoted by $c_n(A)$, $c_n^z(A)$,
and $c_n^\delta(A)$ and are called the \emph{$n$th codimension},
\emph{central codimension} and \emph{proper central codimension} of $A$,
respectively. In particular,
\[
c_n(A)=c_n^z(A)+c_n^\delta(A)
\qquad\text{for every }n\geq1.
\]

We denote by $\mathcal V=\operatorname{var}(A)$ the \emph{variety} generated
by $A$, that is, the class of all $F$-algebras satisfying every
polynomial identity of $A$. We have $\Id(\mathcal V)=\Id(A)$ and
$\Id^z(\mathcal V)=\Id^z(A)$ and define the codimensions of
$\mathcal V$ by $c_n(\mathcal V)=c_n(A)$,
$c_n^z(\mathcal V)=c_n^z(A)$ and
$c_n^\delta(\mathcal V)=c_n^\delta(A)$.

The symmetric group $S_n$ acts naturally on $P_n$ by permuting the
variables. Under this action, $P_n$ is isomorphic to the left
$S_n$-module $FS_n$ via the linear map
\begin{equation}\label{eq:regular-module}
FS_n\rightarrow P_n,
\qquad
\sigma\mapsto x_{\sigma(1)}\cdots x_{\sigma(n)}.
\end{equation}
Since $\Id(A)$ and $\Id^z(A)$ are $T$-spaces, their intersections with
$P_n$ are invariant under this action. Consequently, the quotients
$P_n(A)$ and $P_n^z(A)$ inherit structures of left $S_n$-modules.
Their characters are denoted by $\chi_n(A)$ and $\chi_n^z(A)$ and
are called the \emph{$n$th cocharacter} and \emph{central cocharacter} of $A$,
respectively.
Since $F$ has characteristic zero,  by complete reducibility, we can write
\begin{equation}\label{character}
\chi_n(A)=\sum_{\lambda\vdash n}m_\lambda(A)\chi_\lambda
\qquad\text{and}\qquad
\chi_n^z(A)=\sum_{\lambda\vdash n}m_\lambda^z(A)\chi_\lambda,
\end{equation}
where $\chi_\lambda$ is the irreducible $S_n$-character associated to the partition $\lambda$ of $n$ and $m_\lambda(A)$
and $m_\lambda^z(A)$ are the corresponding multiplicities.

Under the identification in~\eqref{eq:regular-module}, minimal left
ideals of $FS_n$ correspond to irreducible $S_n$-submodules of $P_n$.
We recall their construction using essential idempotents. For a Young
tableau $T_\lambda$ of shape $\lambda\vdash n$, the essential
idempotent $e_{T_\lambda}\in FS_n$ is defined by
\begin{equation}\label{eq:idempotent}
e_{T_\lambda}
=
\sum_{\sigma\in R_{T_\lambda}}
\sum_{\tau\in C_{T_\lambda}}
(\operatorname{sgn}\tau)\sigma\tau,
\end{equation}
where $R_{T_\lambda}$ and $C_{T_\lambda}$ denote the row and column stabilizers of $T_\lambda$, respectively.

\begin{lemma} \cite[Lemma 2.4.1]{GZ}\label{lem:idempotent_generation}
Let $M$ be an irreducible left $S_n$-module with character
$\chi_\lambda$ and let $0\neq f\in M$. Then there exists a Young
tableau $T_\lambda$ of shape $\lambda$ such that
$e_{T_\lambda}f\neq0$. In particular,
$M=FS_n(e_{T_\lambda}f)$.
\end{lemma}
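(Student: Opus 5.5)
The plan is to reduce to the standard structure theory of $FS_n$ in characteristic zero. First I will use Wedderburn's theorem: $FS_n=\bigoplus_{\mu\vdash n}I_\mu$, where $I_\mu$ is the minimal two-sided ideal. Each $I_\mu$ is a simple algebra, isomorphic to a full matrix algebra over $F$, since $F$ is a splitting field for $S_n$. Moreover, $I_\mu$ acts as zero on every irreducible module whose character is different from $\chi_\mu$.

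Next I will recall the basic facts about $e_{T_\lambda}$ from \eqref{eq:idempotent}. It is a quasi-idempotent: $e_{T_\lambda}^2=\gamma e_{T_\lambda}$ with $\gamma=n!/\chi_\lambda(1)\neq0$. The left ideal $FS_n e_{T_\lambda}$ is minimal with character $\chi_\lambda$, and in particular $e_{T_\lambda}\in I_\lambda$. As $T_\lambda$ ranges over all tableaux of shape $\lambda$, the elements $e_{T_\lambda}=\sigma e_{T^0_\lambda}\sigma^{-1}$ are all conjugate to one another. Together they span the two-sided ideal generated by $e_{T^0_\lambda}$, and since $I_\lambda$ is simple this ideal is all of $I_\lambda$.

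The argument then runs as follows. Take $0\neq f\in M$. The identity $1$ of $FS_n$ decomposes as $1=\sum_\mu \varepsilon_\mu$, where $\varepsilon_\mu$ is the central idempotent of $I_\mu$. For $\mu\neq\lambda$, the element $\varepsilon_\mu$ lies in $I_\mu$ and therefore acts as zero on $M$, so $\varepsilon_\lambda f=f\neq0$. Now suppose, for contradiction, that $e_{T_\lambda}f=0$ for every tableau $T_\lambda$. Since these elements span $I_\lambda$, the whole ideal $I_\lambda$ would annihilate $f$. In particular $\varepsilon_\lambda f=0$, which is a contradiction. Hence some $T_\lambda$ satisfies $e_{T_\lambda}f\neq0$.

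Finally, $FS_n(e_{T_\lambda}f)$ is a nonzero submodule of the irreducible module $M$, so it equals $M$. The only step needing real care is the spanning claim: the conjugates of $e_{T^0_\lambda}$ span $I_\lambda$. I would justify it using the fact that a simple algebra has no proper nonzero two-sided ideals. I would also note explicitly that the span of all conjugates $\sigma e\sigma^{-1}$ is the two-sided ideal generated by $e$. This holds because $\sigma e\tau=(\sigma e\sigma^{-1})\sigma\tau$, and a product of this form is again a linear combination of conjugates once we use that $I_\lambda$ is spanned by the set $\{a e b\}$. To avoid that subtlety, I would instead argue directly. If $e_{T_\lambda}f=0$ for all $T_\lambda$, then $e_{T^0_\lambda}\sigma^{-1}f=0$ for all $\sigma$, so $e_{T^0_\lambda}$ annihilates $FS_n f=M$. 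But $M\cong FS_ne_{T^0_\lambda}$, and there $e_{T^0_\lambda}\cdot e_{T^0_\lambda}=\gamma e_{T^0_\lambda}\neq0$, which is a contradiction.
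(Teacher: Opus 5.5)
The paper gives no proof of this lemma; it quotes it from \cite{GZ}, so there is no internal argument to match. Your proof is correct, and your final, direct version is self-contained. If $e_{T_\lambda}f=0$ for every tableau, then $e_{\sigma T^0_\lambda}=\sigma e_{T^0_\lambda}\sigma^{-1}$ shows that $e_{T^0_\lambda}$ kills $\operatorname{span}\{\sigma^{-1}f\}=FS_nf=M$. On the other hand, $e_{T^0_\lambda}$ acts nontrivially on the isomorphic module $FS_ne_{T^0_\lambda}$, because $e_{T^0_\lambda}^2=\gamma e_{T^0_\lambda}\neq0$.

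The usual textbook argument instead writes $f=\varepsilon_\lambda f$ and uses that $I_\lambda$ is the sum of the minimal left ideals $FS_ne_{T_\lambda}$ over standard $T_\lambda$. Then $I_\lambda f\neq0$ forces $e_{T_\lambda}f\neq0$ for some $T_\lambda$. That route needs the decomposition theorem, but it shows $T_\lambda$ can even be chosen standard. Your route needs only conjugation-equivariance and quasi-idempotence of $e_{T_\lambda}$. It produces an arbitrary tableau, which is all the statement requires.

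One correction to your first pass. You assert that the linear span of the conjugates $\sigma e_{T^0_\lambda}\sigma^{-1}$ is all of $I_\lambda$, but this is not justified: the justification you sketch presupposes that $I_\lambda$ is spanned by such conjugates. The claim is also unnecessary. The annihilator of $f$ is a left ideal. If it contains every $\sigma e_{T^0_\lambda}\sigma^{-1}$, it contains $\sum_{\sigma}FS_n e_{T^0_\lambda}\sigma^{-1}=FS_ne_{T^0_\lambda}FS_n=I_\lambda$, which already contradicts $\varepsilon_\lambda f=f$. Either delete that sentence or replace it with this observation.
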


Let $S_\lambda$ denote the irreducible $S_n$-module corresponding to a partition $\lambda\vdash n$. We write $d_\lambda=\chi_\lambda(1)=\dim_F S_\lambda$.  For every finite-dimensional $FS_n$-module $M$ and every Young tableau $T_\lambda$ of shape $\lambda$, the multiplicity
of $S_\lambda$ in $M$ is
\[
m_\lambda(M)=\dim_F e_{T_\lambda}M.
\] Since $P_n$ is isomorphic to $FS_n$ as an $FS_n$-module, the decompositions in the first three degrees are
\[
P_1\cong S_{(1)},\quad
P_2\cong S_{(2)}\oplus S_{(1,1)}
\quad\text{and}\quad
P_3\cong S_{(3)}\oplus 2S_{(2,1)}\oplus S_{(1,1,1)}.
\]
Thus, $P_1$ and $P_2$, and hence their quotients, have multiplicities bounded by one. In degree three, the only multiplicity that can
exceed one is that corresponding to $\lambda=(2,1)$.

For this partition, consider the essential idempotent
$e_{T_\lambda}=(1+(13))(1-(12))$. The space $e_{T_\lambda}P_3$ has a basis consisting of the polynomials
\[
f=[x_1,x_2]x_3+[x_3,x_2]x_1,
\qquad
g=x_1[x_3,x_2]+x_3[x_1,x_2].
\]

Setting $x_3=x_1$ yields $f(x_1,x_2,x_1)=2\tilde{f}$ and
$g(x_1,x_2,x_1)=2\tilde{g}$, where
\begin{equation*}
\tilde{f}=[x_1,x_2]x_1
\quad \mbox{ and }\quad 
\tilde{g}=x_1[x_1,x_2].
\end{equation*}
Conversely, $f$ and $g$ are the linearizations of $\tilde{f}$
and $\tilde{g}$ with respect to $x_1$.

The multiplicity $m_{(2,1)}(A)$ is the dimension of the span of
the images of $f$ and $g$ in $P_3(A)$. Hence $m_{(2,1)}(A)\leq1$
if and only if $\alpha f+\beta g\in\Id(A)$ for some
$\alpha,\beta\in F$ not both zero. By specialization and
linearization, this is equivalent to
$\alpha\tilde{f}+\beta\tilde{g}\in\Id(A)$.
Such an identity is therefore necessary for all ordinary cocharacter multiplicities to be bounded by one. Ananin and Kemer \cite{AnaninKemer1976} proved that this condition is also sufficient to control the multiplicities in every degree.

\begin{theorem}[{\cite{AnaninKemer1976}}]
\label{thm:ananin-kemer}
Let $\mathcal V$ be a variety generated by a PI-algebra.
Then $m_\lambda(\mathcal V)\leq1$ for every $n\geq1$ and every
partition $\lambda\vdash n$ if and only if there exist
$\alpha,\beta\in F$, not both zero, such that
\[
\alpha[x_1,x_2]x_1+\beta x_1[x_1,x_2]\in\Id(\mathcal V).
\]
\end{theorem}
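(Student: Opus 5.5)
The necessity of the condition was already established in the discussion preceding the statement. If $m_{(2,1)}(\mathcal V)\le 1$, then the images of $f$ and $g$ in $P_3(\mathcal V)$ are linearly dependent. Specializing $x_3=x_1$ in the resulting relation $\alpha f+\beta g\in\Id(\mathcal V)$ gives $\alpha[x_1,x_2]x_1+\beta x_1[x_1,x_2]\in\Id(\mathcal V)$. So the whole work lies in the sufficiency: a single cubic identity must force $m_\lambda(\mathcal V)\le 1$ in every degree. I would pass to the $GL_k$-language. For $\lambda=(\lambda_1,\dots,\lambda_k)\vdash n$, the multiplicity $m_\lambda(\mathcal V)$ equals the dimension of the space of highest weight vectors of weight $\lambda$ in the relatively free algebra $F\langle x_1,\dots,x_k\rangle/(\Id(\mathcal V)\cap F\langle x_1,\dots,x_k\rangle)$. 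Modulo the symmetrizations, this space is spanned by the images of products of standard polynomials
\[
St_{\lambda'_{1}}(x_1,\dots,x_{\lambda'_1})\,St_{\lambda'_{2}}(x_1,\dots,x_{\lambda'_2})\cdots
\]
taken with all possible interleavings of their factors, where $\lambda'$ is the conjugate partition. The goal is to show that all these generators are proportional modulo $\Id(\mathcal V)$.

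I would split according to the pair $(\alpha,\beta)$.

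\textbf{Case $\beta=-\alpha$.} Here the identity is $[x_1,x_2,x_1]=0$. In characteristic zero (indeed in characteristic $\neq 3$), linearizing and using the Jacobi identity yields $[x_1,x_2,x_3]\in\Id(\mathcal V)$. Hence $\mathcal V\subseteq\operatorname{var}(E)$, where $E$ is the infinite-dimensional Grassmann algebra. Since $\chi_n(E)=\sum_{j}\chi_{(n-j,1^j)}$ (Krakowski--Regev), every multiplicity of $\mathcal V$ is at most that of $E$, namely at most one.

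\textbf{Case $\alpha+\beta\neq0$.} After rescaling, the identity reads $x_1[x_1,x_2]\equiv\gamma[x_1,x_2]x_1$ with $\gamma\neq1$; the sub-cases $\alpha=0$ and $\beta=0$ are included with $\gamma=0$ or $\gamma=\infty$. Linearizing gives a rule for moving a variable past a commutator at the cost of a scalar and terms of the same shape. In particular, $x_1[x_1,x_2]$ and $[x_1,x_2]x_1$ both become multiples of $[x_1,x_2,x_1]$. From this I would derive that, modulo $\Id(\mathcal V)$, $P_n$ is spanned by the polynomials
\[
x_{i_1}\cdots x_{i_r}[x_{j_1},x_{j_2},\dots,x_{j_s}],\qquad i_1<\dots<i_r,
\]
possibly together with products of two commutators, which should vanish or reduce further. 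Evaluating the highest weight vectors in this normal form, I expect the following. Every $\lambda$ with $\lambda_3>0$, or with $\lambda_2\geq 2$, should contribute zero or a single vector. This is because the symmetrization over each row collapses the choices of where the letters $x_1$ and $x_2$ sit inside the commutator to one element up to scalar, the factor $\gamma\neq1$ being exactly what prevents a cancellation producing a second independent vector.

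The main obstacle is this last reordering step. One must show that two different interleavings of the standard-polynomial factors are proportional modulo the $T$-ideal. Such an interleaving is, for example, $x_1 St_2(x_1,x_2)$ versus $St_2(x_1,x_2)x_1$, generalized to higher degree with many factors. I would first handle the two-row partitions $\lambda=(p,q)$ by induction on $q$, using the cubic identity to push each $x_1$ to the left of all commutators. I would then show that for $\lambda_3>0$ the products of three or more standard factors lie in $\Id(\mathcal V)$ up to a single survivor. This argument is again an induction on $n$, using that consequences of the identity are closed under multiplication, since we are working with a $T$-ideal.
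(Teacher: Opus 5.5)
The paper gives no proof of this statement. It is quoted from Ananin and Kemer, and only necessity is checked in the preceding discussion, by exactly your argument. Your case $\beta=-\alpha$ is also correct: it is the $T$-ideal version of Lemma~\ref{conseqcomutadortriplo} followed by comparison with $\mathcal G$, as in Theorem~\ref{central-triple-multiplicities}.

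The gap is the case $\alpha+\beta\neq0$, which carries the entire content of the theorem. There you offer expectations rather than arguments (``should vanish or reduce further'', ``I expect'', ``I would then show'').

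\emph{The normal form proves nothing by itself.} Your spanning set $x_{i_1}\cdots x_{i_r}[x_{j_1},\ldots,x_{j_s}]$ with $i_1<\cdots<i_r$ already spans $P_n$ modulo $\Id(UT_2)=\langle[x_1,x_2][x_3,x_4]\rangle_T$, yet $m_{(n-1,1)}(UT_2)=n-1$. So everything rests on the claim that, for each $\lambda$, all interleavings of the standard-polynomial factors are proportional modulo $\Id(\mathcal V)$, and you supply no mechanism for this.

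\emph{The proposed mechanism does not go beyond one column.} The relation $x_1[x_1,x_2]=\gamma[x_1,x_2]x_1$ only moves a letter past a commutator containing that same letter. This does settle $\lambda=(n-1,1)$: for $\beta\neq0$ it gives $x_1^a[x_1,x_2]x_1^b=\gamma^a[x_1,x_2]x_1^{a+b}$. But already for $\lambda=(2,1,1)$ you must compare $x_1St_3(x_1,x_2,x_3)$, $St_3(x_1,x_2,x_3)x_1$ and the alternations with $x_1$ inserted in the middle. There the linearized relation $x_3[x_1,x_2]=\gamma[x_1,x_2]x_3+\gamma[x_3,x_2]x_1-x_1[x_3,x_2]$ merely exchanges which variable stands outside the commutator. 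It is not a terminating rewriting rule, so neither of your inductions (on $q$, or on $n$) is actually set up. The remark that $\gamma\neq1$ ``prevents a cancellation producing a second independent vector'' is not an argument.

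Moreover, a single uniform treatment of all $\gamma\neq1$ is unlikely to succeed, because the consequences of the cubic identity differ qualitatively among the sub-cases.
For $\alpha=\beta$ and for $\alpha\beta(\alpha^2-\beta^2)\neq0$, the identity forces $[x_1,x_2x_3x_4]\in\Id(\mathcal V)$. This follows from the computation in Lemma~\ref{lem:block-commutators-central} and the module equality $FS_4\tilde{q_t}=FS_4[x_1,x_2x_3x_4]$ in the proof of Lemma~\ref{lem:generic-long-commutator}; both are $T$-space statements and apply verbatim to $\Id(\mathcal V)$.
For $\alpha=0$, however, $A_1^*$ satisfies $x_1[x_1,x_2]=0$ while $[e_{12},e_{22}e_{22}e_{22}]=e_{12}\neq0$; likewise $A_1$ for $\beta=0$.
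This is why even the paper's central analogue splits $\alpha+\beta\neq0$ into the sub-cases $\alpha=0$, $\beta=0$, $\alpha=\beta$ and $\alpha\beta(\alpha^2-\beta^2)\neq0$, each with its own argument.

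What is missing is, in each sub-case, an explicit family of consequences of the cubic identity strong enough to bound $P_n(\mathcal V)$, for every $n$, by an $S_n$-module with multiplicity-free character. Without it the sufficiency direction remains unproved.
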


Since $\Id^z(A)$ is a $T$-space, the same argument applies to
$P_3^z(A)$. Thus $m_{(2,1)}^z(A)\leq1$ if and only if there
exist $\alpha,\beta\in F$, not both zero, such that
\begin{equation}\label{eq:degree-three-central}
\alpha[x_1,x_2]x_1+\beta x_1[x_1,x_2]\in\Id^z(A).
\end{equation}
Therefore, this is a necessary condition for all central cocharacter multiplicities to be bounded by one. One of the main results of this paper shows that this condition is also sufficient.

\section{When the central multiplicities are bounded by a constant}

The boundedness of multiplicities in ordinary cocharacters was characterized by Mishchenko, Regev and Zaicev \cite[Theorem~1]{MishchenkoRegevZaicev1999} in terms of the exclusion of $UT_2$ from the variety. Since
$$
m_\lambda^z(\mathcal V)\leq m_\lambda(\mathcal V),
$$
bounded ordinary multiplicities always imply bounded central multiplicities. The converse is not immediate, since passing from the ordinary to the central cocharacter may decrease multiplicities. The purpose of this section is to prove that, nevertheless, boundedness of the central multiplicities is equivalent to boundedness of the ordinary ones and to obtain the corresponding central analogues of the characterizations in \cite[Theorem~1]{MishchenkoRegevZaicev1999}. The same algebra is naturally relevant in the central setting. Indeed,
$UT_2$ has no proper central polynomial, see \cite[Lemma 1]{GiambrunoZaicev2018}. Consequently,
$$
\chi_n^z(UT_2)=\chi_n(UT_2)
$$
for every $n\geq1$. We recall below the precise information on its
cocharacters that will be used throughout this section.

\begin{lemma} \cite{Malcev1971,MishchenkoRegevZaicev1999}\label{lem:UT2-central}
For the algebra $UT_2$ we have
$
\operatorname{Id}^z(UT_2)
=
\operatorname{Id}(UT_2)
=
\left\langle [x_1,x_2][x_3,x_4]\right\rangle_T
$ and
\begin{align*}
\chi_n^z(UT_2)=
\chi_n(UT_2)
=
\chi_{(n)}+\sum_{i=1}^{\lfloor n/2\rfloor}
(n-2i+1)\chi_{(n-i,i)}+
\sum_{i=1}^{\lfloor (n-1)/2\rfloor}
(n-2i)\chi_{(n-i-1,i,1)}.
\end{align*}
\end{lemma}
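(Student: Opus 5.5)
This lemma is cited from Mal'cev and Mishchenko--Regev--Zaicev. The plan has three parts: the $T$-ideal, the equality $\operatorname{Id}^z=\operatorname{Id}$, and the cocharacter computation.

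\textbf{The $T$-ideal.} I will use the classical argument. First, $[x_1,x_2][x_3,x_4]\in\operatorname{Id}(UT_2)$, because commutators in $UT_2$ are strictly upper triangular and the product of two strictly upper triangular $2\times2$ matrices is zero. For the reverse inclusion I work modulo $I=\langle [x_1,x_2][x_3,x_4]\rangle_T$. Every multilinear polynomial in $P_n$ is congruent to a combination of the elements
\[
x_{i_1}\cdots x_{i_k}[x_j,x_{l_1},\ldots,x_{l_{m}}],\qquad i_1<\dots<i_k,\ j>l_1<\dots<l_m .
\]
Here one uses that modulo $I$ the variables outside a single commutator can be reordered, and that variables inside a long commutator can also be reordered. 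The latter holds because $[y,a,b]-[y,b,a]=[y,[a,b]]$ lies in $I$ up to products of commutators. I will then show these elements are linearly independent modulo $\operatorname{Id}(UT_2)$ by evaluating on generic elements $e_{11},e_{22},e_{12}$ and their combinations. Counting the elements gives $c_n=(n-2)2^{n-1}+2$, which matches $c_n(UT_2)$, so $\operatorname{Id}(UT_2)=I$.

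\textbf{Central polynomials.} The center of $UT_2$ is $F\cdot 1$. Suppose $f$ is central and not an identity. If $f$ is multilinear with $f(a)=c\cdot1$ and $c\neq0$, substitute diagonal matrices. The diagonal part of $UT_2$ is commutative, isomorphic to $F\oplus F$, so $f$ acts there as $\lambda\, x_1\cdots x_n$ with $\lambda$ the sum of the coefficients of $f$. Taking all $x_i=e_{11}$ gives $\lambda e_{11}$, which is central only if $\lambda=0$. So $f$ vanishes on diagonal matrices. Then an evaluation involving some $e_{12}$ lands in $Fe_{12}$, and its central part is zero. Hence $f(a)$ is strictly upper triangular for every evaluation, and it is central, so $f(a)=0$. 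By multilinearity this shows every multilinear central polynomial is an identity, and multilinearization extends this to all of $\operatorname{Id}^z$. This is \cite[Lemma 1]{GiambrunoZaicev2018}.

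\textbf{Cocharacter.} Using the basis above, $P_n(UT_2)$ decomposes as follows. The part with no commutator gives $\chi_{(n)}$. For fixed commutator length $m+1$ with $k$ outside variables, the span is isomorphic to the induced product $\chi_{(k)}\otimes(\chi_{(m,1)})$ restricted appropriately. Summing via the Young rule and the Littlewood--Richardson rule gives shapes with at most two rows plus a possible box in row three. Equivalently, I would compute $m_\lambda=\dim e_{T_\lambda}P_n(UT_2)$ by finding the highest-weight vectors in the relatively free algebra in three variables. For $\lambda=(n-i,i)$ these are
\[
x_1^{a}[x_2,x_1]x_1^{b}x_2^{i-1}\quad\text{(suitably symmetrized)},\qquad a+b=n-2i,
\]
which yields $n-2i+1$ independent choices. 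For $(n-i-1,i,1)$ the analogous vectors built from the standard polynomial $St_3$ give $n-2i$. The case $\lambda=(n)$ has multiplicity $1$, and any shape with more than three rows, or with a third row longer than $1$, lies in $I$.

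The main obstacle is proving linear independence of these highest-weight vectors modulo $\operatorname{Id}(UT_2)$. I would handle it by an explicit evaluation, with $x_1=e_{11}+te_{22}$, $x_2$ generic and $x_3$ generic, and then read off distinct polynomial coefficients in $t$.
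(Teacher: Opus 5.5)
The paper does not prove this lemma. It quotes the $T$-ideal from Mal'cev, the cocharacter from Mishchenko--Regev--Zaicev, and $\operatorname{Id}^z(UT_2)=\operatorname{Id}(UT_2)$ from Giambruno--Zaicev. Your reconstructions of the $T$-ideal and of the absence of proper central polynomials are sound, with one repair. The relation $[u,a,b]-[u,b,a]=[u,[a,b]]$ lies in $I$ only when $u$ is itself a commutator, so modulo $I$ you may permute only the entries from the third position on; it is the Jacobi identity that moves the smallest index into the second position.

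The cocharacter part, however, has a genuine gap. The multiplicities $n-2i+1$ and $n-2i$ appear only in your highest-weight-vector paragraph, and the vectors written there have the wrong weight. The element $x_1^{a}[x_2,x_1]x_1^{b}x_2^{i-1}$ with $a+b=n-2i$ has multidegree $(n-2i+1,i)$. So for $i\geq 2$ it is not even of degree $n$, and it is not annihilated by the derivation $x_2\mapsto x_1$. Since products of two commutators vanish modulo $I$, the remaining $i-1$ columns of height two must be realized as skew-symmetrized pairs $(x_1,x_2)$, with one letter in front of the commutator and the other in its tail. With $X_k=y_ke_{11}+z_ke_{12}+w_ke_{22}$ and $d_k=y_k-w_k$, the correct vectors have $(1,2)$-entries $(z_1d_2-z_2d_1)\,y_1^{a}d_1^{b}\,(y_1d_2-y_2d_1)^{i-1}$. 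Even with the right vectors, that route gives only lower bounds for $m_\lambda$. The shape $(n-i-1,i,1)$ is only gestured at.

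The route you started with and then set aside closes the gap without any highest-weight vectors. For $k\leq n-2$, let $W_k$ be the span of your basis elements with exactly $k$ letters in front of the commutator.
\begin{enumerate}
\item $W_k$ is $S_n$-stable, because the outer letters commute modulo $I$ (since $[x_1,x_2]x_3[x_4,x_5]\in I$).
\item The image of $x_1\cdots x_n$ spans a trivial quotient.
\item Comparing dimensions gives $W_k\cong\operatorname{Ind}_{S_k\times S_r}^{S_n}(\chi_{(k)}\otimes\xi_r)$ with $r=n-k$, where $\xi_r$ is the character of the multilinear commutators of degree $r$ modulo $I$.
\end{enumerate}
You assert $\xi_r=\chi_{(r-1,1)}$ without proof, and $\dim\xi_r=r-1$ alone does not force it (for $r=4$ it could a priori be $\chi_{(2,1,1)}$). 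It follows from
\[
[X_j,X_l,X_{k_1},\ldots,X_{k_{r-2}}]=(-1)^r d_1\cdots d_r\,(u_l-u_j)\,e_{12},\qquad u_k=z_k/d_k,
\]
which identifies this module with the augmentation submodule of the permutation module on $u_1,\ldots,u_r$.

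Pieri's rule then says that $\lambda$ occurs in $W_k$, with multiplicity one, exactly when $\lambda_1\geq r-1\geq\lambda_2\geq 1\geq\lambda_3$ and $\lambda_4=0$. For $\lambda=(n-i,i)$ this means $i+1\leq r\leq n-i+1$, giving $n-2i+1$ values of $r$. For $\lambda=(n-i-1,i,1)$ it means $i+1\leq r\leq n-i$, giving $n-2i$ values. No shape other than these and $(n)$ occurs.
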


In particular, the multiplicities of the central cocharacters of $UT_2$ are not bounded by a constant. We now combine Lemma~\ref{lem:UT2-central} with the theorem of Mishchenko, Regev and Zaicev to obtain the corresponding characterization
for central cocharacters.


\begin{theorem} 
Let $\mathcal{V}$ be a variety of algebras and let
$\chi_n^z(\mathcal{V})=\sum_{\lambda \vdash n} m_\lambda ^z\chi_\lambda$ be its $n$th central cocharacter. Then the following conditions are equivalent: 
\begin{enumerate}
    \item $m_\lambda ^z\leq k$ for some constant $k$, for all $n$ and for every $\lambda \vdash n$.
\item $U T_2 \notin \mathcal{V}$.
\item There exists a constant $M$ such that for all $n$ and for every $\lambda \vdash n$, the inequality
$$
n-\lambda_1-\lambda_1^{\prime} \leq M
$$
holds whenever $m_\lambda^z \neq 0$, where $\lambda^{\prime}$ is the conjugate partition of $\lambda$.
\item $\mathcal{V}$ admits a nonzero central polynomial of the form
$$
g(x,y)=\sum_{i=0}^m \alpha_i y^i x y^{m-i}, \quad \text{for some  } m\geq0.
$$
\end{enumerate}
\end{theorem}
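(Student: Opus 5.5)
The plan is to prove the cycle $(1)\Rightarrow(2)\Rightarrow(4)\Rightarrow(3)\Rightarrow(2)\Rightarrow(1)$, relying throughout on \cite[Theorem~1]{MishchenkoRegevZaicev1999} and Lemma~\ref{lem:UT2-central}.

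For $(1)\Rightarrow(2)$, argue by contrapositive. If $UT_2\in\mathcal V$, then $\Id^z(\mathcal V)\subseteq\Id^z(UT_2)$. Hence $P_n^z(UT_2)$ is a quotient of $P_n^z(\mathcal V)$, and so $m^z_\lambda(\mathcal V)\ge m^z_\lambda(UT_2)$. By Lemma~\ref{lem:UT2-central}, $m^z_{(n-1,1)}(UT_2)=n-1$, which is unbounded.

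For $(2)\Rightarrow(1)$, use $m_\lambda^z(\mathcal V)\le m_\lambda(\mathcal V)$ together with the Mishchenko--Regev--Zaicev theorem: if $UT_2\notin\mathcal V$, the ordinary multiplicities are uniformly bounded, hence so are the central ones.

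For $(2)\Rightarrow(4)$, again apply \cite[Theorem~1]{MishchenkoRegevZaicev1999}. When $UT_2\notin\mathcal V$, it supplies a nonzero identity of the form $h(x,y)=\sum_{i=0}^m\alpha_iy^ixy^{m-i}\in\Id(\mathcal V)\subseteq\Id^z(\mathcal V)$, and such an $h$ is trivially central. The only point to check is the exact form of the identity in that theorem. If it is stated in a different normalization (for instance $y^a x y^b$-type or with commutators), convert it: multiplying a nonzero identity of that shape by $y$ on either side stays in the $T$-ideal and keeps the required form.

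For $(4)\Rightarrow(3)$, the aim is to show directly that $g$ kills all $\lambda$ with large $n-\lambda_1-\lambda_1'$. Since $g$ is central, $[g,z]\in\Id(\mathcal V)$, and $[g(x,y),z]$ is a nonzero polynomial. One option is to deduce (3) from the ordinary version of (3) via $m^z_\lambda\le m_\lambda$. But then one needs $UT_2\notin\mathcal V$, and that is the real content. So I would instead prove $(4)\Rightarrow(2)$: assume $UT_2\in\mathcal V$. Then $g$ is central for $UT_2$, hence, by Lemma~\ref{lem:UT2-central}, an identity of $UT_2$, i.e.\ $g\in\langle[x_1,x_2][x_3,x_4]\rangle_T$. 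Evaluating at $x=e_{12}$, $y=\mu e_{11}+\nu e_{22}$ gives $\sum_i\alpha_i\mu^i\nu^{m-i}e_{12}=0$ for all $\mu,\nu$, forcing every $\alpha_i=0$, a contradiction. Hence $(4)\Rightarrow(2)$.

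It remains to place (3). For $(2)\Rightarrow(3)$, the ordinary version of (3) from \cite{MishchenkoRegevZaicev1999} gives a bound $M$ whenever $m_\lambda\ne0$, and $m_\lambda^z\ne0$ implies $m_\lambda\ne0$. For $(3)\Rightarrow(2)$, if $UT_2\in\mathcal V$, then $m^z_{(n-i-1,i,1)}(\mathcal V)\ge m^z_{(n-i-1,i,1)}(UT_2)=n-2i\ge1$ for $i=\lfloor (n-1)/2\rfloor$. But $n-\lambda_1-\lambda_1'=n-(n-i-1)-3=i-2$, which is unbounded, contradicting (3).

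Assembled, the implications are $(1)\Leftrightarrow(2)\Leftrightarrow(3)$ and $(2)\Leftrightarrow(4)$. The main obstacle is matching the exact statements of \cite[Theorem~1]{MishchenkoRegevZaicev1999} (the form of the identity in its part (4)). Everything else reduces to comparing with $UT_2$ via Lemma~\ref{lem:UT2-central} and the inclusion $\Id\subseteq\Id^z$.
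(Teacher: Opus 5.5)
Your proposal is correct and follows essentially the same route as the paper. You derive (1), (3) and (4) from (2) via \cite[Theorem~1]{MishchenkoRegevZaicev1999} together with $m_\lambda^z\le m_\lambda$ and $\Id\subseteq\Id^z$, and you show that each of (1), (3), (4) fails when $UT_2\in\mathcal V$ by comparison with Lemma~\ref{lem:UT2-central} and the evaluation at $x=e_{12}$ with diagonal $y$. The differences are only cosmetic: you use the partitions $(n-i-1,i,1)$ instead of $(r,r)$ to violate (3) and a two-parameter diagonal $y$ instead of $te_{11}+e_{22}$, and the paper takes the identity in the form of (4) directly from the cited theorem, so your normalization hedge is not needed.
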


\begin{proof}
Suppose first that $UT_2\notin\mathcal V$. By \cite[Theorem~1]{MishchenkoRegevZaicev1999},
the ordinary cocharacter multiplicities of $\mathcal V$ are uniformly bounded, the partitions with nonzero ordinary multiplicity satisfy the bound in~(3) and $\mathcal V$ satisfies a nonzero polynomial identity of the form given in~(4). Since
$m_\lambda^z(\mathcal V)\le m_\lambda(\mathcal V)$ and $\operatorname{Id}(\mathcal V)\subseteq
\operatorname{Id}^z(\mathcal V),$ conditions~(1), (3) and~(4) follow.

To prove the converse implications, suppose that $UT_2\in\mathcal V$. Then $\operatorname{Id}^z(\mathcal V)
\subseteq \operatorname{Id}^z(UT_2),$ and the induced surjection of $FS_n$-modules $P_n^z(\mathcal V)\rightarrow P_n^z(UT_2)$ yields
$m_\lambda^z(\mathcal V)\ge m_\lambda^z(UT_2)$ for every $n\ge1$ and $\lambda\vdash n.$ In particular, Lemma~\ref{lem:UT2-central} gives
\[
m_{(n-1,1)}^z(\mathcal V)\ge n-1,
\] for all $n\geq 2$, so condition~(1) fails. The same lemma gives $m_{(r,r)}^z(\mathcal V)\ge1$ for every $r\ge1$.
For $\lambda=(r,r)\vdash 2r$, we have
$2r-\lambda_1-\lambda'_1=r-2,$ which is unbounded as $r$ increases. Thus condition~(3) also fails. Finally, suppose that condition~(4) holds and let
\[
g(x,y)=\sum_{i=0}^{m}\alpha_i y^i x y^{m-i}
\]
be a nonzero central polynomial of $\mathcal V$. By Lemma~\ref{lem:UT2-central},
\[
g\in\operatorname{Id}^z(\mathcal V)
\subseteq\operatorname{Id}^z(UT_2)
=\operatorname{Id}(UT_2).
\]
Evaluating at $x=e_{12}$ and $y=t e_{11}+e_{22}$, for an arbitrary $t\in F$, gives
\[
0=g(e_{12},t e_{11}+e_{22})
  =\left(\sum_{i=0}^{m}\alpha_i t^i\right)e_{12}.
\]
Since $F$ is infinite, all coefficients $\alpha_i$ must vanish, contrary to $g\ne0$. Hence condition~(4) fails as well.

Therefore, each of conditions~(1), (3) and~(4) implies~(2), completing the proof.
\end{proof}

As a consequence, we have the following result.

\begin{corollary}
Let $A$ be a PI-algebra over a field of characteristic zero. Then the multiplicities of the central cocharacters of $A$ are bounded by a constant if and only if the multiplicities of the ordinary cocharacters of $A$ are bounded by a constant.
\end{corollary}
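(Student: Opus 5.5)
The corollary follows from the preceding theorem together with the theorem of Mishchenko, Regev and Zaicev \cite[Theorem~1]{MishchenkoRegevZaicev1999}. Let $\mathcal V=\operatorname{var}(A)$. We have $\Id(\mathcal V)=\Id(A)$ and $\Id^z(\mathcal V)=\Id^z(A)$. Hence $m_\lambda(A)=m_\lambda(\mathcal V)$ and $m_\lambda^z(A)=m_\lambda^z(\mathcal V)$ for every $n$ and every $\lambda\vdash n$, so it suffices to argue at the level of the variety.

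For one direction, suppose the ordinary multiplicities of $A$ are bounded by a constant $k$. For every $n$ we have $P_n\cap\Id(A)\subseteq P_n\cap\Id^z(A)$. This gives a surjection of $FS_n$-modules $P_n(A)\to P_n^z(A)$, and therefore $m_\lambda^z(A)\le m_\lambda(A)\le k$. This is the inequality already recorded at the start of the section.

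For the converse, suppose the central multiplicities are uniformly bounded. By the equivalence of conditions (1) and (2) in the preceding theorem, $UT_2\notin\mathcal V$. By \cite[Theorem~1]{MishchenkoRegevZaicev1999}, excluding $UT_2$ from $\mathcal V$ is equivalent to the ordinary multiplicities of $\mathcal V$ being uniformly bounded. Hence the ordinary multiplicities of $A$ are bounded.

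There is no real obstacle here. The only point needing care is that the hypothesis that $A$ is a PI-algebra guarantees $\mathcal V$ is a proper variety, so that the cited theorem applies. The substantive step, that bounded central multiplicities force $UT_2\notin\mathcal V$, was already handled in the preceding theorem via Lemma~\ref{lem:UT2-central}, which shows $m_{(n-1,1)}^z(UT_2)=n-1$.
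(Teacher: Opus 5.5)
Your proposal is correct and follows essentially the same route as the paper. The easy direction uses $m_\lambda^z(A)\le m_\lambda(A)$. For the converse, you combine the implication (1)$\Rightarrow$(2) of the preceding theorem, which gives $UT_2\notin\operatorname{var}(A)$, with the Mishchenko--Regev--Zaicev theorem to bound the ordinary multiplicities.
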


\begin{proof}
Suppose first that $m_\lambda (A) \leq k$ for all $n\geq 1$ and every $\lambda\vdash n$. Since $m_\lambda^z(A) \leq m_\lambda(A)$, it immediately follows that the multiplicities of the central cocharacters of $A$ are also bounded by a constant.

Conversely, assume that $m_\lambda^z(A)\leq k$ for all $n\geq 1$ and every $\lambda\vdash n$. By the previous theorem, $UT_2\notin\operatorname{var}(A).$ Hence, \cite[Theorem~1]{MishchenkoRegevZaicev1999} implies that the ordinary multiplicities $m_\lambda(A)$ are bounded by a constant, completing the proof.\end{proof}

\section{Some algebras with bounded central multiplicities}

In this section, we describe the central polynomials and cocharacters of some algebras needed for the main result of this paper. An important tool is the Leibniz rule for commutators,
$$
[x_1x_2,x_3]=x_1[x_2,x_3]+[x_1,x_3]x_2.
$$

We begin with the infinite-dimensional Grassmann algebra
\[
\mathcal G
=
\langle 1,e_1,e_2,\ldots
\mid e_i e_j=-e_j e_i\rangle.
\]
We recall the following facts concerning its identities and cocharacters.

\begin{lemma}[{\cite{Brandao2010,KrakowskiRegev1973,Regev2016}}]
\label{lem:grassmann-central}
For the infinite-dimensional Grassmann algebra, we have $\operatorname{Id}(\mathcal G)=\left\langle [x_1,x_2,x_3]\right\rangle_T$ and $\operatorname{Id}^z(\mathcal G)
=
\left\langle
[x_1,x_2],\operatorname{Id}(\mathcal G)
\right\rangle^T.$ Moreover, for every $n\geq2$,
\[
\chi_n(\mathcal G)
=
\sum_{k=1}^n\chi_{(k,1^{n-k})}
\qquad\text{and}\qquad
\chi_n^z(\mathcal G)
=
\sum_{j=0}^{\lfloor (n-1)/2\rfloor}
\chi_{(n-2j,1^{2j})}.
\]
\end{lemma}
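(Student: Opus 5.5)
The lemma has two parts. The description of $\operatorname{Id}(\mathcal G)$ and the formula for $\chi_n(\mathcal G)$ are the classical Krakowski--Regev results, so I would quote them. The real work is to identify $\operatorname{Id}^z(\mathcal G)$ and then compute $\chi_n^z(\mathcal G)$ by subtracting the proper central part from $\chi_n(\mathcal G)$. Throughout I use that $Z(\mathcal G)=\mathcal G^{(0)}$, the even part, which is spanned by $1$ and the monomials of even length. Write $\mathcal U=\langle [x_1,x_2],\operatorname{Id}(\mathcal G)\rangle^T$.

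\textbf{Step 1: $\mathcal U\subseteq\operatorname{Id}^z(\mathcal G)$.} Any commutator $[a,b]$ in $\mathcal G$ lies in the even part. This is because odd--odd products anticommute, giving $[a,b]=2ab$ with $ab$ even, while every other commutator of homogeneous elements vanishes. Hence $[a,b]$ is central, and since $\operatorname{Id}^z(\mathcal G)$ is a $T$-space, $\mathcal U\subseteq\operatorname{Id}^z(\mathcal G)$.

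\textbf{Step 2: the reverse inclusion, through multilinear components.} Modulo $\operatorname{Id}(\mathcal G)$ the commutators are central and $[x_1,x_2][x_1,x_3]\equiv0$. So $P_n(\mathcal G)$ has the standard basis
\[
x_{i_1}\cdots x_{i_r}[x_{j_1},x_{j_2}]\cdots[x_{j_{2s-1}},x_{j_{2s}}],\qquad i_1<\cdots<i_r,\ j_1<\cdots<j_{2s}.
\]
Here $r+2s=n$, which gives $c_n(\mathcal G)=2^{n-1}$. Every basis element with $s\ge1$ is a $T$-space consequence of $[x_1,x_2]$: substitute $x_1\mapsto x_{i_1}\cdots x_{i_r}[x_{j_1},x_{j_2}]\cdots$ and $x_2\mapsto x_{j_{2s-1}}$, then use the Leibniz rule together with centrality of commutators modulo $\operatorname{Id}(\mathcal G)$. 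The resulting expression is a multiple of the basis element. It therefore suffices to show that a combination $\gamma\,x_1\cdots x_n+h$, with $h$ in the span of the $s\ge1$ elements, is central only if $\gamma=0$. For $n\ge1$, evaluate all $x_i$ at distinct generators $e_i$. Every commutator evaluates to $0$, and $e_1\cdots e_n$ is central only if $n$ is even. For odd $n$ this forces $\gamma=0$. For even $n$, replace $x_1$ by $e_1e_{n+1}$ (an even element) and the remaining variables by $e_2,\dots,e_n$; the value is then $\gamma\,e_1e_{n+1}e_2\cdots e_n$, which has odd length, so again $\gamma=0$. This proves $\operatorname{Id}^z(\mathcal G)=\mathcal U$ and $c_n^z(\mathcal G)=1$, which is inconsistent with the stated character. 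I therefore expect a subtlety at this point.

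Indeed, the stated cocharacter $\sum_j\chi_{(n-2j,1^{2j})}$ is strictly larger than $\chi_{(n)}$. The catch is that the $T$-space substitution in Step 2 is only available for $r\ge1$. The substitution $x_1\mapsto$ (product) requires a leading factor to substitute into, and when $r=0$ the element $[x_{j_1},x_{j_2}]\cdots[x_{j_{2s-1}},x_{j_{2s}}]$ is a product of commutators, which need not lie in the $T$-space generated by a single commutator. The clean way to proceed is to determine precisely which basis elements lie in $\mathcal U$. The element $x_{i_1}\cdots x_{i_r}\prod[\ ,\ ]$ lies in $\mathcal U$ exactly when it can be rewritten as a single commutator $[u,v]$ modulo $\operatorname{Id}(\mathcal G)$. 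I would use Regev's description that the proper central part is spanned modulo $\operatorname{Id}(\mathcal G)$ by these rewritings. In terms of characters, the quotient $P_n^z(\mathcal G)$ is spanned by the elements $x_{i_1}\cdots x_{i_r}$ with $r=n$, together with the pure products of commutators with $r=0$ that are not commutators themselves. I would then compute the character by recognizing the span with a fixed $r$ as $\chi_{(r)}$ induced against a sign-type module on the $2s$ commutator variables. This produces the hooks $(r+1,1^{2s-1})$ and $(r,1^{2s})$, since the sequence $\chi_n(\mathcal G)=\sum_k\chi_{(k,1^{n-k})}$ is assembled from pieces of exactly this kind. Collecting, the central quotient keeps one hook for each of $\chi_{(n-2j,1^{2j})}$, which matches the stated formula.

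The main obstacle is precisely this bookkeeping: showing that the proper central elements account for exactly the hooks $(n-2j-1,1^{2j+1})$, so that $\chi_n^z=\chi_n-\chi_n^\delta$. Since this is cited from Brandão and Regev, I would defer to those references for the exact cocharacter, after verifying the small cases $n=2,3$ by hand.
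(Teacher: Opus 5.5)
\textbf{There is a genuine gap.} Quoting $\operatorname{Id}(\mathcal G)$ and $\chi_n(\mathcal G)$ is fine, and Step~1 is correct. Step~2, however, rests on a false claim. It is not true that every basis element $x_{i_1}\cdots x_{i_r}[x_{j_1},x_{j_2}]\cdots[x_{j_{2s-1}},x_{j_{2s}}]$ with $s\ge1$ lies in $\mathcal U$. Already $x_1[x_2,x_3]$ takes the value $2e_1e_2e_3$ at $(e_1,e_2,e_3)$. This value is odd, hence not central, so by your own Step~1 the element is not in $\mathcal U\subseteq\operatorname{Id}^z(\mathcal G)$. The Leibniz expansion does not return a multiple of a single basis element; for instance $[x_1x_3,x_2]\equiv -x_1[x_2,x_3]+x_3[x_1,x_2]\pmod{\operatorname{Id}(\mathcal G)}$. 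Your evaluation is also wrong: $[e_i,e_j]=2e_ie_j\neq0$ for distinct generators, so to kill commutators you must send all but one variable to $1$. Thus $c_n^z(\mathcal G)=1$ was never derived.

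Your proposed fix is backwards. Pure products of commutators \emph{do} lie in $\mathcal U$, since $[x_1w,x_2]\equiv[x_1,x_2]w\pmod{\operatorname{Id}(\mathcal G)}$ for $w=[x_3,x_4]\cdots[x_{2s-1},x_{2s}]$. They are also central, because their values are even. So they vanish in $P_n^z(\mathcal G)$, and your spanning set for $P_n^z(\mathcal G)$ is wrong. For $n=3$ it is one-dimensional. Yet the classes of $\sum_{\sigma\in S_3}x_{\sigma(1)}x_{\sigma(2)}x_{\sigma(3)}$ (value $6e_1$ at $x_1=e_1$, $x_2=x_3=1$) and of $\operatorname{St}_3$ (value $6e_1e_2e_3$) are both non-central and generate non-isomorphic submodules, so $c_3^z(\mathcal G)\ge2$. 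Your closing ``collecting'' sentence only restates the formula, so neither $\operatorname{Id}^z(\mathcal G)\subseteq\mathcal U$ nor $\chi_n^z(\mathcal G)$ is established.

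\textbf{What is missing} is a correct centrality test and a genuine count.

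First, the test. Since $Z(\mathcal G)=\mathcal G^{(0)}$, and a multilinear polynomial evaluated on homogeneous elements is homogeneous of the total parity, a central $f$ must vanish on every homogeneous substitution of odd total parity.

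Second, the count. Write $f\equiv\sum_J\alpha_J\,x_{\{1,\dots,n\}\setminus J}[x_J]$ in your basis, where $J$ runs over even subsets and $[x_J]$ is the corresponding product of commutators. Substitute $x_i\mapsto e_i$ for $i\in O$ and $x_i\mapsto1$ otherwise. Centrality then forces $\sum_{J\subseteq O}\pm2^{|J|/2}\alpha_J=0$ for every $O$ of odd size. Restricted to the columns with $J\subseteq\{1,\dots,n-1\}$, the rows $O=J\cup\{n\}$ form a triangular matrix with nonzero diagonal. This gives $c_n^z(\mathcal G)\ge2^{n-2}$, so the central classes occupy at most $2^{n-2}$ dimensions of $P_n(\mathcal G)$.

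Third, the identification. Since $P_n(\mathcal G)$ is multiplicity-free with hook constituents, $(P_n\cap\mathcal U)/(P_n\cap\operatorname{Id}(\mathcal G))$ is a sum of hook components. It remains to show that every component $(n-2j-1,1^{2j+1})$ lies in $\mathcal U$. These components have total dimension $\sum_j\binom{n-1}{2j+1}=2^{n-2}$, so equality holds throughout. This yields $\operatorname{Id}^z(\mathcal G)=\mathcal U$ in every degree and $\chi_n^z(\mathcal G)=\sum_j\chi_{(n-2j,1^{2j})}$ simultaneously.

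The paper itself does not prove the lemma; it only cites Brand\~{a}o et al., Krakowski--Regev and Regev. Deferring to these references is therefore legitimate, but the erroneous intermediate steps should be removed rather than presented as a proof.
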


We shall also use the following well-known subalgebras of $UT_2$
\[
A_1=Fe_{11}+Fe_{12}
\qquad\text{and}\qquad
A_1^*=Fe_{22}+Fe_{12}.
\]

Their centers are zero and hence their central polynomials coincide with their polynomial identities, whose generators are given in the following lemma.

\begin{lemma}[{\cite{GLM2005}}]
\label{lem:A1_and_A1star}
For the algebras $A_1$ and $A_1^*$, we have
$\operatorname{Id}^z(A_1)
=\operatorname{Id}(A_1)
=\langle [x_1,x_2]x_3\rangle_T$  and $\operatorname{Id}^z(A_1^*)
=\operatorname{Id}(A_1^*)
=\langle x_1[x_2,x_3]\rangle_T.$ Moreover, for every $n\geq2$, $$\chi_n^z(A_1)=\chi_n^z(A_1^*)
=\chi_{(n)}+\chi_{(n-1,1)}.$$
\end{lemma}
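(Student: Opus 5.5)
The plan is to treat $A_1$ in detail and obtain $A_1^*$ by passing to the opposite algebra. There are three steps: compute the center, find the identities, and identify $P_n(A_1)$ as a permutation module.

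\emph{Step 1: the center is zero.} Let $z=ae_{11}+be_{12}\in Z(A_1)$. Then $[z,e_{11}]=-be_{12}$ and $[z,e_{12}]=ae_{12}$. Both must vanish, so $a=b=0$. Hence $Z(A_1)=0$, so a central polynomial of $A_1$ takes only the value $0$, and $\operatorname{Id}^z(A_1)=\operatorname{Id}(A_1)$. The same computation applies to $A_1^*$.

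\emph{Step 2: the identities of $A_1$.} Let $J=Fe_{12}$. Every commutator of elements of $A_1$ lies in $J$, and $JA_1=0$ because $e_{12}e_{11}=e_{12}e_{12}=0$. Therefore $[x_1,x_2]x_3\in\operatorname{Id}(A_1)$.

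For the reverse inclusion, put $I=\langle [x_1,x_2]x_3\rangle_T$. Modulo $I$, any two adjacent variables in a monomial may be swapped as long as they are not in the last position. Hence every multilinear monomial of degree $n$ is congruent modulo $I$ to one of
\[
m_j=x_{i_1}\cdots x_{i_{n-1}}x_j, \qquad i_1<\cdots<i_{n-1},\ \{i_1,\dots,i_{n-1}\}=\{1,\dots,n\}\setminus\{j\},
\]
for $j=1,\dots,n$. So $\dim P_n/(P_n\cap I)\le n$.

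To show the $m_j$ are independent modulo $\operatorname{Id}(A_1)$, fix $j$ and substitute $x_j=e_{12}$ and $x_k=e_{11}$ for $k\neq j$.
\begin{itemize}
\item $m_j$ evaluates to $e_{11}\cdots e_{11}e_{12}=e_{12}$.
\item For $k\ne j$, the monomial $m_k$ contains $e_{12}$ followed later by some $e_{11}$, so it evaluates to $0$.
\end{itemize}
Thus any linear relation $\sum_k c_k m_k\in \operatorname{Id}(A_1)$ forces $c_j=0$ for each $j$. It follows that $P_n\cap I=P_n\cap\operatorname{Id}(A_1)$ for every $n$. Since $T$-ideals in characteristic zero are generated by their multilinear parts, $\operatorname{Id}(A_1)=I$. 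This also gives $c_n(A_1)=n$.

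\emph{Step 3: the cocharacter.} Modulo $I$, the action of $\sigma\in S_n$ sends $m_j$ to $m_{\sigma(j)}$: after permuting, the prefix can be reordered increasingly, and only the last variable matters. Hence $P_n(A_1)=P_n^z(A_1)$ is isomorphic to the natural permutation module of $S_n$ on $n$ points. Its character is $\chi_{(n)}+\chi_{(n-1,1)}$ for $n\ge2$.

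\emph{The algebra $A_1^*$.} The map $X\mapsto X^{t}$ followed by conjugation by the antidiagonal permutation matrix is an anti-isomorphism $A_1\to A_1^*$ (it sends $e_{11}\mapsto e_{22}$ and $e_{12}\mapsto e_{12}$). A polynomial $f$ is an identity of $A_1^*$ exactly when its reversal is an identity of $A_1$. Reversing $[x_1,x_2]x_3$ gives $x_3[x_2,x_1]$, so $\operatorname{Id}(A_1^*)=\langle x_1[x_2,x_3]\rangle_T$. Reversal commutes with the $S_n$-action up to the automorphism induced by the longest permutation, which preserves characters. So the cocharacter of $A_1^*$ is the same as that of $A_1$.

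The main point needing care is the independence argument in Step 2, since it is what pins down $\operatorname{Id}(A_1)$ exactly. The substitution above handles it directly, so I expect no real obstacle.
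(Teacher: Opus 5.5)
Your proof is correct. It also goes further than the paper, which does not prove this lemma. The paper only remarks, as in your Step~1, that $Z(A_1)=Z(A_1^*)=0$, so central polynomials of these algebras are exactly their identities. It then quotes the generators of $\operatorname{Id}(A_1)$ and $\operatorname{Id}(A_1^*)$, and the cocharacter $\chi_{(n)}+\chi_{(n-1,1)}$, from the cited reference.

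Your Steps~2 and~3 replace that citation with the standard self-contained argument:
\begin{enumerate}
\item Modulo $I=\langle[x_1,x_2]x_3\rangle_T$, the first $n-1$ letters of a multilinear monomial commute, so $P_n/(P_n\cap I)$ is spanned by the $n$ monomials $m_j$, indexed by the last variable.
\item The evaluation $x_j=e_{12}$, $x_k=e_{11}$ ($k\neq j$) separates the $m_j$. This gives $P_n\cap I=P_n\cap\operatorname{Id}(A_1)$, and hence $\operatorname{Id}(A_1)=I$ by multilinear generation in characteristic zero.
\item The induced action $\sigma\bar m_j=\bar m_{\sigma(j)}$ identifies $P_n^z(A_1)=P_n(A_1)$ with the natural permutation module of $S_n$.
\end{enumerate}
What your route buys is independence from the reference, an explicit basis, and $c_n^z(A_1)=c_n(A_1)=n$. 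The paper's route buys only brevity.

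One small correction on the $A_1^*$ step concerns the $S_n$-action. With the substitution action $x_i\mapsto x_{\sigma(i)}$ used in the paper, reversal of monomials commutes with the $S_n$-action exactly. Under $P_n\cong FS_n$ it is right multiplication by the longest permutation $w_0$, which commutes with the left action. So reversal maps $P_n\cap\operatorname{Id}(A_1)$ onto $P_n\cap\operatorname{Id}(A_1^*)$ and gives $P_n(A_1)\cong P_n(A_1^*)$ as $S_n$-modules outright. The twist by $w_0$ you mention appears only for the place-permutation action, and since it is inner it would not change characters anyway.

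You should also say explicitly that reversal $\rho$ carries $\langle f\rangle_T$ onto $\langle\rho(f)\rangle_T$. This holds because $\rho\varphi\rho$ is an endomorphism whenever $\varphi$ is. Alternatively, the mirror-image argument handles $A_1^*$ directly: modulo $x_1[x_2,x_3]$, a multilinear monomial is determined by its first variable.
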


The following result compares the central cocharacter multiplicities of a direct sum with those of its summands.

\begin{proposition}\label{prop:central-cocharacters-direct-sum}
For two algebras $A$ and $B$,  write

\[
\begin{aligned}
\chi_n^z(A)
&=\sum_{\lambda\vdash n}m_\lambda^z(A)\chi_\lambda,\quad \chi_n^z(B) =\sum_{\lambda\vdash n}m_\lambda^z(B)\chi_\lambda \quad \text{and} \quad 
\chi_n^z(A\oplus B) =\sum_{\lambda\vdash n}m_\lambda^z(A\oplus B)\chi_\lambda.
\end{aligned}
\] Then, for every $n\geq1$ and every partition $\lambda\vdash n$, $$m_\lambda^z(A),m_\lambda^z(B)
\leq
m_\lambda^z(A\oplus B)
\leq
m_\lambda^z(A)+m_\lambda^z(B).$$
\end{proposition}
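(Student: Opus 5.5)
The plan is to identify the central $T$-space of the direct sum, $\Id^z(A\oplus B)=\Id^z(A)\cap\Id^z(B)$, and then turn this equality into statements about $S_n$-modules. For the equality, note that $Z(A\oplus B)=Z(A)\oplus Z(B)$, since an element $(a,b)$ commutes with all of $A\oplus B$ exactly when $a\in Z(A)$ and $b\in Z(B)$. Evaluations of a polynomial on $A\oplus B$ are computed componentwise:
\[
f((a_1,b_1),\ldots,(a_n,b_n))=(f(a_1,\ldots,a_n),f(b_1,\ldots,b_n)).
\]
Hence $f$ is central on $A\oplus B$ if and only if it is central on both $A$ and $B$. For the direction from $A\oplus B$ to $A$, take all $b_i=0$ (or any fixed $b_i$), and symmetrically for $B$.

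Intersecting with $P_n$, set $U=P_n\cap\Id^z(A)$ and $W=P_n\cap\Id^z(B)$, which are $S_n$-submodules of $P_n$. Then $P_n\cap\Id^z(A\oplus B)=U\cap W$.

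For the lower bounds, use the inclusion $U\cap W\subseteq U$. It induces a surjection of $S_n$-modules
\[
P_n/(U\cap W)\to P_n/U,
\]
that is, $P_n^z(A\oplus B)\to P_n^z(A)$. By complete reducibility in characteristic zero, $P_n^z(A)$ is then isomorphic to a direct summand of $P_n^z(A\oplus B)$, which gives $m_\lambda^z(A)\le m_\lambda^z(A\oplus B)$. The same argument applied to $W$ gives the bound for $B$.

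For the upper bound, consider the diagonal map
\[
P_n\to P_n/U\oplus P_n/W,\qquad f\mapsto (f+U,f+W).
\]
It is an $S_n$-homomorphism with kernel exactly $U\cap W$. Therefore $P_n^z(A\oplus B)$ embeds as an $S_n$-submodule of $P_n^z(A)\oplus P_n^z(B)$. Complete reducibility again shows that the multiplicity of $S_\lambda$ in a submodule is at most its multiplicity in the ambient module. Equivalently, one can apply $e_{T_\lambda}$ and compare dimensions via $m_\lambda(M)=\dim e_{T_\lambda}M$, which is additive over direct sums. This yields $m_\lambda^z(A\oplus B)\le m_\lambda^z(A)+m_\lambda^z(B)$.

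The only point needing care is the equality of central $T$-spaces, specifically the description of the center of the direct sum. This is immediate for algebras that need not have an identity, because the computation is purely componentwise. All remaining steps are routine.
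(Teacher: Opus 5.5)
Your proposal is correct and follows essentially the same route as the paper. Both arguments use $\Id^z(A\oplus B)=\Id^z(A)\cap\Id^z(B)$, obtained from $Z(A\oplus B)=Z(A)\oplus Z(B)$, then the induced surjections onto $P_n^z(A)$ and $P_n^z(B)$ for the lower bounds and the diagonal embedding into $P_n^z(A)\oplus P_n^z(B)$ for the upper bound. You also make explicit the componentwise evaluation and the complete-reducibility step, which the paper leaves implicit.
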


\begin{proof}
Since $Z(A\oplus B)=Z(A)\oplus Z(B)$, we have $\operatorname{Id}^z(A\oplus B)= \operatorname{Id}^z(A)\cap\operatorname{Id}^z(B).$
Therefore, the natural map
$$P_n^z(A\oplus B) \rightarrow P_n^z(A)\oplus P_n^z(B)$$
is an injective homomorphism of $S_n$-modules, which provides $m_\lambda^z(A\oplus B) \leq m_\lambda^z(A)+m_\lambda^z(B).$ On the other hand, the inclusions $\operatorname{Id}^z(A\oplus B)\subseteq\operatorname{Id}^z(A) $ and $
\operatorname{Id}^z(A\oplus B)\subseteq\operatorname{Id}^z(B)$ induce surjective homomorphisms
\[
P_n^z(A\oplus B)\rightarrow P_n^z(A)
\quad\text{and}\quad
P_n^z(A\oplus B)\rightarrow P_n^z(B),
\]
which yield $m_\lambda^z(A), m_\lambda^z(B) \leq m_\lambda^z(A\oplus B)$.

\end{proof}

Making use of the previous result, we next describe the central polynomials and cocharacters of an algebra that will play a role in the next section.

\begin{lemma}\label{prop:central-polynomials-G-A1star}
Let $B=\mathcal G\oplus A_1^*$. Then
$\operatorname{Id}^z(B)
=
\left\langle
x_1[x_2,x_1],
\,
[x_1,x_2][x_3,x_4]
\right\rangle^T.$
Moreover, for every $n\geq2$,
\[
\chi_n^z(B)
=
\chi_{(n-1,1)}
+
\chi_n^z(\mathcal G).
\]
\end{lemma}

\begin{proof}
Recall that $m_{(n)}^z \leq m_{(n)}\leq d_{(n)}= 1$. Thus, Proposition \ref{prop:central-cocharacters-direct-sum} and Lemmas \ref{lem:grassmann-central} and \ref{lem:A1_and_A1star} yield
$$
\chi_n^z(B)
=
\chi_{(n-1,1)}
+
\sum_{j=0}^{\lfloor (n-1)/2\rfloor}
\chi_{(n-2j,1^{2j})}.
$$

Now let $\mathcal{C}
=
\left\langle
x_1[x_2,x_1],
\,
[x_1,x_2][x_3,x_4]
\right\rangle^T.$
Clearly, $\mathcal C\subseteq\operatorname{Id}^z(B)$ and so $$\dim_F\left(\frac{P_n}{P_n\cap \mathcal{C}}\right)\geq c_n^z(B)=(n-1)+c_n^z(\mathcal{G}).$$
For the reverse inclusion, we first observe that
$$\dim_F \frac{P_n}{P_n \cap \mathcal{C}}= \dim_F \frac{P_n \cap \operatorname{Id}^z(\mathcal{G})}{P_n \cap \mathcal{C}}+c_n ^z(\mathcal{G}).$$

The proof will be complete once we prove that the right-hand side is at most $(n-1)+c_n^z(\mathcal{G})$. We first observe that $x_4[x_1,x_2,x_3] \in \mathcal{C}$. Indeed, the linearization of $x_1[x_2,x_1]$ is $x_1[x_2,x_3]-x_3[x_1,x_2]$. Applying the endomorphism $x_1\mapsto[x_1,x_2]$, $x_2 \mapsto x_3$ and $x_3 \mapsto x_4$, we obtain $[x_1,x_2][x_3,x_4]-x_4[x_1,x_2,x_3] \in \mathcal{C}$, which implies that $x_4[x_1,x_2,x_3] \in \mathcal{C}$.

Now, we note that $[\mathcal{C}, x] \subseteq\operatorname{Id}^z(B)$, since $\mathcal{C} \subseteq\operatorname{Id}^z(B)$. In fact, we claim that $[\mathcal{C}, x] \subseteq\mathcal{C}$. To see this, it suffices to note that
$$[[x_1,x_2][x_3,x_4],x_5] =[x_1,x_2][x_3,x_4,x_5]+[x_1,x_2,x_5][x_3,x_4]\quad \mbox{ and }$$
$$[x_1[x_2,x_1],x_3]=x_1[x_2,x_1,x_3]+[x_1,x_3][x_2,x_1].$$
The claim follows since $[x_1,x_2][x_3,x_4], x_4[x_1,x_2,x_3] \in \mathcal{C}$.

Now, let $\ell(x_1,x_2,x_3)=x_1[x_2,x_3]+x_3[x_2,x_1]$ be the linearization of $x_1[x_2,x_1]$. A direct computation gives
\[
[x_1x_2,x_3]-[x_1,x_3,x_2]
=
\ell(x_1,x_2,x_3)+\ell(x_2,x_1,x_3)
\quad \mbox{
and}\quad  [x_1,x_2x_3]-[x_1,x_2,x_3]=\ell(x_2,x_1,x_3),
\]
consequently,
\begin{equation}\label{eq_consequence_linearization}
    [x_1x_2,x_3]\equiv[x_1,x_3,x_2] \quad\textnormal{and} \quad [x_1,x_2x_3]\equiv[x_1,x_2,x_3] \mod\mathcal{C}.
\end{equation}
Since $[\mathcal{C}, x] \subseteq\mathcal{C}$, these congruences may also be applied inside longer commutators.

Now, we recall that $
\operatorname{Id}^z(\mathcal{G})
=
\left\langle
[x_1,x_2],\operatorname{Id}(\mathcal{G})
\right\rangle^T$ and $\operatorname{Id}(\mathcal{G})
=\left\langle[x_1,x_2,x_3]\right\rangle_T.
$ Thus, every element of
$P_n\cap\operatorname{Id}^z(\mathcal G)$ is a linear combination of multilinear components of endomorphic
images of the polynomials $[x_1,x_2],\,
x_1[x_2,x_3,x_4],\,
[x_2,x_3,x_4]x_5$ and $x_1[x_2,x_3,x_4]x_5.$

 By \eqref{eq_consequence_linearization}, every multilinear component of an endomorphic image
of $[x_1,x_2]$ is congruent modulo $C$ to a linear combination of left-normed multilinear commutators.

For the other types, recall that $x_4[x_1,x_2,x_3] \in \mathcal{C}$, which implies that
$$x_1[x_2,x_3,x_4]x_5=x_1[x_2,x_3,x_4,x_5]+x_1x_5[x_2,x_3,x_4] \in \mathcal{C}\quad \mbox{ and }$$
$$
[x_1,x_2,x_3]x_4=[x_1,x_2,x_3,x_4]+x_4[x_1,x_2,x_3] \equiv [x_1,x_2,x_3,x_4] \mod \mathcal{C}.
$$

It follows that $\frac{P_n \cap \operatorname{Id}^z(\mathcal{G})}{P_n \cap \mathcal{C}}$ is generated by commutators of the form $[x_{i_1}, x_{i_2}, \ldots, x_{i_n}]$. The Jacobi identity yields
$$[x_{1},[x_{2},x_3]]=[x_1,x_2,x_3]-[x_1,x_3,x_2].$$
Since $[x_1,x_2,[x_3,x_4]] \in \mathcal{C}$ and $[[x_1,x_2],[x_3,x_4]]=[x_1,x_2][x_3,x_4]-[x_3,x_4][x_1,x_2]$, we obtain
$$
[[x_1,x_2],x_3,x_4] \equiv [[x_1,x_2],x_4,x_3] \mod \mathcal{C}.
$$

Recall that the space of multilinear commutators of degree $n$
is spanned by
\[
[x_i,x_n,x_{j_1},\ldots,x_{j_{n-2}}],
\qquad 1\leq i\leq n-1,
\]
where $(j_1,\ldots,j_{n-2})$ runs through all orderings of
$\{1,\ldots,n-1\}\setminus\{i\}$. Modulo $C$, the variables from the third position onward can be
permuted freely. Hence, for each fixed $i$, all these commutators are congruent modulo $C$. This implies that the quotient space $\frac{P_n \cap \operatorname{Id}^z(\mathcal{G})}{P_n \cap \mathcal{C}}$ is spanned by the $n-1$ elements
$$ [x_i, x_n, x_1, \ldots, \widehat{x_i}, \ldots, x_{n-1}], \quad 1 \leq i \leq n-1,
$$
where $\widehat{x_i}$ indicates that the variable $x_i$ is omitted. Consequently, 
$$
\dim_F \frac{P_n \cap \operatorname{Id}^z(\mathcal{G})}{P_n \cap \mathcal{C}} \leq n-1,
$$
which completes the proof.

\end{proof}

In a similar way, we can prove the following result.

\begin{lemma}\label{prop:central-polynomials-G-A1}
Let $B=\mathcal G\oplus A_1$. Then $\operatorname{Id}^z(B)
=
\left\langle
[x_1,x_2]x_1,
\,
[x_1,x_2][x_3,x_4]
\right\rangle^T.$
Moreover, for every $n\geq2$,
\[
\chi_n^z(B)
=
\chi_{(n-1,1)}+\chi_n^z(\mathcal G).
\]
\end{lemma}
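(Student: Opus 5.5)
The argument mirrors the proof of Lemma~\ref{prop:central-polynomials-G-A1star}, with left and right multiplication interchanged. First, the cocharacter is computed from Proposition~\ref{prop:central-cocharacters-direct-sum} together with Lemmas~\ref{lem:grassmann-central} and~\ref{lem:A1_and_A1star}. Since $m_{(n)}^z\le 1$, and the only constituent of $\chi_n^z(A_1)$ not already in $\chi_n^z(\mathcal G)$ is $\chi_{(n-1,1)}$ (for $n\ge 3$, $(n-1,1)$ is not of the form $(n-2j,1^{2j})$; for $n=2$, $\chi_2^z(\mathcal G)=\chi_{(2)}$), the upper and lower bounds coincide. This gives $\chi_n^z(B)=\chi_{(n-1,1)}+\chi_n^z(\mathcal G)$ and hence $c_n^z(B)=(n-1)+c_n^z(\mathcal G)$.

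Next, let $\mathcal C=\langle [x_1,x_2]x_1,[x_1,x_2][x_3,x_4]\rangle^T$. The inclusion $\mathcal C\subseteq \operatorname{Id}^z(B)$ is checked directly. In $\mathcal G$, $[x_1,x_2]$ is central, so $[x_1,x_2]x_1$ is a product of a central element with $x_1$; more precisely, $[[x_1,x_2]x_1,x_3]=[x_1,x_2][x_1,x_3]$. Since $[x_1,x_2][x_1,x_3]$ is an identity of $\mathcal G$ (it is the specialization $x_4\mapsto x_1$ of $[x_1,x_2][x_3,x_4]+[x_1,x_3][x_2,x_4]\in\operatorname{Id}(\mathcal G)$), we get $[x_1,x_2]x_1\in\operatorname{Id}^z(\mathcal G)$. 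In $A_1$ both generators are identities by Lemma~\ref{lem:A1_and_A1star}. As in the previous lemma, it then suffices to show
\[
\dim_F\frac{P_n\cap\operatorname{Id}^z(\mathcal G)}{P_n\cap\mathcal C}\le n-1 .
\]

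For this bound I would establish the mirror-image facts.
\begin{enumerate}
\item Linearizing $[x_1,x_2]x_1$ gives $[x_1,x_2]x_3+[x_3,x_2]x_1\in\mathcal C$. Substituting $x_3\mapsto[x_3,x_4]$ and using $[x_1,x_2][x_3,x_4]\in\mathcal C$ yields $[x_3,x_4,x_2]x_1\in\mathcal C$, that is, $[x_1,x_2,x_3]x_4\in\mathcal C$ up to sign.
\item $[\mathcal C,x]\subseteq\mathcal C$, by the Leibniz expansions of $[[x_1,x_2]x_1,x_3]$ and $[[x_1,x_2][x_3,x_4],x_5]$ and the previous fact.
\item The congruences~\eqref{eq_consequence_linearization} hold modulo $\mathcal C$. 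They come from expressing $[x_1x_2,x_3]-[x_1,x_3,x_2]$ and $[x_1,x_2x_3]-[x_1,x_2,x_3]$ through the linearization $\ell'(x_1,x_2,x_3)=[x_1,x_2]x_3+[x_3,x_2]x_1$. For example, $[x_1x_2,x_3]-[x_1,x_3,x_2]=x_1[x_2,x_3]+x_2[x_1,x_3]$, and the right-hand side is congruent modulo $[x_1,x_2][x_3,x_4]$-type terms to $[x_2,x_3]x_1+[x_1,x_3]x_2=-\ell'(x_2,x_3,x_1)$ up to sign.
\end{enumerate}
The generators $x_1[x_2,x_3,x_4]$ and $x_1[x_2,x_3,x_4]x_5$ then reduce to commutators modulo $\mathcal C$ via $x_1c=cx_1-[c,x_1]$, the identity $[x_1,x_2,x_3]x_4\in\mathcal C$, and $[\mathcal C,x]\subseteq\mathcal C$. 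The generator $[x_2,x_3,x_4]x_5$ lies in $\mathcal C$ outright.

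The remainder is identical to the previous proof. By Jacobi and $[x_1,x_2,[x_3,x_4]]\in\mathcal C$, variables from the third position on commute modulo $\mathcal C$, so the quotient is spanned by the $n-1$ commutators $[x_i,x_n,x_1,\ldots,\widehat{x_i},\ldots,x_{n-1}]$. The main obstacle is step 3: the sign bookkeeping needed to confirm that the correct combinations of the linearization really produce~\eqref{eq_consequence_linearization}. I would verify these two congruences by direct expansion first, and if a sign fails, I would replace them by the symmetric versions $[x_1x_2,x_3]\equiv[x_2,x_3,x_1]$, which serve equally well for reducing to left-normed commutators.
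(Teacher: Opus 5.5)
Your strategy is the one the paper intends (it only says this lemma is proved ``in a similar way'' to the $\mathcal G\oplus A_1^*$ case), and most of your mirrored steps are correct. That includes the cocharacter computation, $\mathcal C\subseteq\operatorname{Id}^z(B)$, $[x_1,x_2,x_3]x_4\in\mathcal C$, $[\mathcal C,x]\subseteq\mathcal C$, the handling of $[x_2,x_3,x_4]x_5$, $x_1[x_2,x_3,x_4]$ and $x_1[x_2,x_3,x_4]x_5$, and the final count of $n-1$ commutators.

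\textbf{Step~3 is false as stated.} Since $\mathcal C\subseteq\operatorname{Id}^z(A_1)=\operatorname{Id}(A_1)$, the congruences \eqref{eq_consequence_linearization} would have to be identities of $A_1$. But:
\begin{itemize}
\item at $x_1=x_2=e_{11}$, $x_3=e_{12}$ you get $[x_1x_2,x_3]-[x_1,x_3,x_2]=2e_{12}$;
\item at $x_1=e_{12}$, $x_2=x_3=e_{11}$ you get $[x_1,x_2x_3]-[x_1,x_2,x_3]=-2e_{12}$.
\end{itemize}
Your derivation breaks where you move $x_1$ across $[x_2,x_3]$. That move costs the triple commutator $-[x_2,x_3,x_1]$, not a product of two commutators, and triple commutators are not in $\mathcal C$. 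Your fallback $[x_1x_2,x_3]\equiv[x_2,x_3,x_1]$ also fails at the first evaluation above, because it is off by a sign.

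\textbf{The correct mirrored congruences.} They come from moving the factor on the \emph{right} of the product, reflecting that $[x_1x_2,x_3]=x_1[x_2,x_3]$ in $A_1$. Using $\ell'(x_1,x_3,x_2)\in\mathcal C$,
\[
[x_1x_2,x_3]=x_1[x_2,x_3]+[x_1,x_3]x_2\equiv x_1[x_2,x_3]-[x_2,x_3]x_1=[x_3,x_2,x_1]\pmod{\mathcal C},
\]
and using $\ell'(x_3,x_1,x_2)\in\mathcal C$ one gets $[x_1,x_2x_3]\equiv-[x_1,x_3,x_2]$. The spanning argument only needs each expression to be congruent to $\pm$ a left-normed commutator, so with these your remaining steps go through.

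\textbf{A cleaner route.} You can avoid all the sign bookkeeping by applying the reversal anti-automorphism $f\mapsto f^*$ of $F\langle X\rangle$ to the previous lemma. This map:
\begin{itemize}
\item sends $\operatorname{Id}^z(R)$ onto $\operatorname{Id}^z(R^{\mathrm{op}})$;
\item sends $x_1[x_2,x_1]\mapsto[x_1,x_2]x_1$ and $[x_1,x_2][x_3,x_4]\mapsto[x_4,x_3][x_2,x_1]$;
\item acts on $P_n$ as right multiplication by the reversing permutation, so it is an $S_n$-module isomorphism.
\end{itemize}
Since $(\mathcal G\oplus A_1^*)^{\mathrm{op}}\cong\mathcal G\oplus A_1$, the lemma follows directly.
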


Note that the algebras presented above have central multiplicities bounded by one.

\section{Characterizing varieties with central multiplicities bounded by 1}

In this section, we prove that all central cocharacter multiplicities
of $A$ are bounded by one if and only if there exist
$\alpha,\beta\in F$, not both zero, such that
\[
\alpha[x_1,x_2]x_1+\beta x_1[x_1,x_2]\in\Id^z(A).
\]
The necessity follows from the criterion
in~\eqref{eq:degree-three-central}. To prove sufficiency, we consider
the possible relations between $\alpha$ and $\beta$ and use the
algebras described in Section~4.

Throughout this section, for $k\geq2$, we write
\[
T^{(k)}=\left\langle[x_1,\ldots,x_k]\right\rangle_T,
\]
where commutators are left-normed.

\subsection{The cases $\alpha=0$ or $\beta=0$}

For $\alpha=0$, we compare $A$ with $\mathcal G\oplus A_1^*$.
The next lemma supplies the second generator of the central
$T$-space described in
Lemma~\ref{prop:central-polynomials-G-A1star}.

For the calculation below, recall that the alternation of a
polynomial $f(x_1,\ldots,x_n,y_1,\ldots,y_m)$, multilinear in
$x_1,\ldots,x_n$, is defined by
\[
\mathcal A_{x_1,\ldots,x_n}f
=
\sum_{\sigma\in S_n}\operatorname{sgn}(\sigma)
f(x_{\sigma(1)},\ldots,x_{\sigma(n)},y_1,\ldots,y_m).
\]

\begin{lemma}\label{lem:product-commutators-central}
Let $A$ be an algebra such that
$x_1[x_2,x_1]\in\operatorname{Id}^z(A)$. Then
$[x_1,x_2][x_3,x_4]\in\operatorname{Id}^z(A)$.
\end{lemma}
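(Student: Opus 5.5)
The plan is to work modulo $\operatorname{Id}^z(A)$. I write $f\equiv g$ when $f-g\in\operatorname{Id}^z(A)$, and use freely that $\operatorname{Id}^z(A)$ is a $T$-space, so congruences survive substituting arbitrary polynomials for the variables. Set $\varphi(u,v,w)=u[v,w]$. By hypothesis the linearization $\ell(x_1,x_2,x_3)=x_1[x_2,x_3]+x_3[x_2,x_1]$ of $x_1[x_2,x_1]$ lies in $\operatorname{Id}^z(A)$, as in the proof of Lemma~\ref{prop:central-polynomials-G-A1star}. This means $\varphi(u,v,w)\equiv-\varphi(w,v,u)$. Trivially also $\varphi(u,v,w)=-\varphi(u,w,v)$.

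\emph{Step 1 (total alternation).} Since the transpositions $(13)$ and $(23)$ generate $S_3$, the two relations above give $\varphi(u_{\sigma(1)},u_{\sigma(2)},u_{\sigma(3)})\equiv\operatorname{sgn}(\sigma)\varphi(u_1,u_2,u_3)$. Applying the alternation $\mathcal A_{u,v,w}$ therefore yields
\[
6\varphi(u,v,w)\equiv\mathcal A_{u,v,w}\bigl(u[v,w]\bigr)=2St_3(u,v,w),
\]
so $u[v,w]\equiv\tfrac13 St_3(u,v,w)$. This holds for arbitrary polynomials $u,v,w$.

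\emph{Step 2 (reduction).} Substituting $x_1\mapsto[x_1,x_2]$, $x_2\mapsto x_3$, $x_3\mapsto x_4$ in $\ell$ gives $[x_1,x_2][x_3,x_4]\equiv x_4[x_1,x_2,x_3]$, exactly as in the proof of Lemma~\ref{prop:central-polynomials-G-A1star}. By Step 1, both sides are congruent to $\tfrac13St_3$ evaluated with one commutator entry. So it suffices to show that one of these, say $x_4[[x_1,x_2],x_3]$, lies in $\operatorname{Id}^z(A)$. Equivalently, it suffices to control polynomials of the form $[a,b]c$, since $x_4[x_1,x_2,x_3]=x_4[x_1,x_2]x_3-x_4x_3[x_1,x_2]$.

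\emph{Step 3 (products in one slot).} To produce terms with the commutator on the left, I will put a product $x_ix_j$ into one argument of $\varphi$ and compare the resulting congruences. For example, $\varphi(x_1,x_2x_3,x_4)\equiv-\varphi(x_2x_3,x_1,x_4)$ expands by the Leibniz rule to
\[
x_1x_2[x_3,x_4]+x_1[x_2,x_4]x_3\equiv-x_2x_3[x_1,x_4].
\]
Similarly, $\varphi(x_1x_2,x_3,x_4)\equiv-\varphi(x_3,x_1x_2,x_4)$ gives
\[
x_1x_2[x_3,x_4]\equiv-x_3x_1[x_2,x_4]-x_3[x_1,x_4]x_2.
\]
I will then permute variables and subtract so that all terms $y z[\cdot,\cdot]$ are rewritten using Step 1, applied to the entries $y$ and $z$ separately. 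The goal is an expression of $x_1[x_2,x_4]x_3$, a term of the form $u[a,b]c$, in terms of elements of the shape $y\,z[\cdot,\cdot]$. Finally, substituting commutators for the variables and using Step 2 should show $x_4[x_1,x_2,x_3]\equiv\lambda\,[x_1,x_2][x_3,x_4]$ for some scalar $\lambda\neq1$. Combined with $[x_1,x_2][x_3,x_4]\equiv x_4[x_1,x_2,x_3]$, this gives $(1-\lambda)[x_1,x_2][x_3,x_4]\equiv0$, and so $[x_1,x_2][x_3,x_4]\in\operatorname{Id}^z(A)$.

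The main obstacle is Step 3. Because $\operatorname{Id}^z(A)$ is not an ideal, I cannot multiply congruences by variables. Every relation must come from substitution into $\ell$, so one must find the right combination of substituted instances whose bookkeeping produces a coefficient $\lambda\neq1$ in characteristic zero. If this direct elimination is unwieldy, I would instead substitute $u=[x_1,x_2]$ into the identity $u[v,w]\equiv\tfrac13St_3(u,v,w)$ and into its instance with $v=[x_1,x_2]$. I would then expand $St_3$ in its equivalent form $[v,w]u+[w,u]v+[u,v]w$ and compare the two resulting expressions to isolate $[x_1,x_2][x_3,x_4]$.
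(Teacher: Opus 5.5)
\textbf{There is a genuine gap, and Step 3 cannot be completed by any choice of substitutions.} Every congruence you use is obtained by substituting polynomials into $\ell$ and taking linear combinations. This covers the alternation in Step 1, the reduction in Step 2, the Leibniz expansions in Step 3, and the fallback with $St_3$. So everything you derive holds modulo the $T$-space $\langle x_1[x_2,x_1]\rangle^T$, and your argument can only succeed if $[x_1,x_2][x_3,x_4]$ lies in that $T$-space.

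It does not. Look at the component of bidegree $(2,2)$ in two variables $x,y$. It is spanned by the instances $\ell(m_1,m_2,m_3)$ with $m_i$ monomials in $x,y$, one of degree two. Examples are $\ell(xx,y,y)=yyxx-yxxy$, $\ell(xy,x,y)=xyxy-xyyx+yxxy-yxyx$, $\ell(x,xy,y)=xxyy-xyxy+yxyx-yxxy$ and $\ell(x,yx,y)=xyxy-xyyx+yyxx-yxyx$. Going through the short list, the functional ``coefficient of $xyxy$ plus coefficient of $yxyx$'' vanishes on every such instance. It takes the value $2$ on $[x,y]^2=xyxy-xyyx-yxxy+yxyx$, which is a substitution instance of $[x_1,x_2][x_3,x_4]$.

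Hence no combination in Step 3 can give $x_4[x_1,x_2,x_3]\equiv\lambda[x_1,x_2][x_3,x_4]$ with $\lambda\neq1$. Together with your (correct) Step 2, that would place $[x_1,x_2][x_3,x_4]$ in $\langle x_1[x_2,x_1]\rangle^T$. This is also why Lemma~\ref{prop:central-polynomials-G-A1star} lists $[x_1,x_2][x_3,x_4]$ as a separate generator of the $T$-space.

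The missing idea is to use more of $\operatorname{Id}^z(A)$ than its being a $T$-space, namely that $f\in\operatorname{Id}^z(A)$ if and only if $[f,x_{n+1}]\in\operatorname{Id}(A)$. The paper passes to $g=[x_1[x_2,x_3]-x_3[x_1,x_2],x_4]\in\operatorname{Id}(A)$ and works in degree five. There the outer slot can also be substituted, as in $g(x_1x_2,x_3,x_5,x_4)$ or $g(x_2,x_5,x_4,x_1x_3)$. These instances express that values of $\ell$ commute with every element, including products and variables other than $x_5$, which is information the degree-four $T$-space does not see. An explicit combination of such instances, alternated over $\{x_1,x_2\}$ and $\{x_3,x_4\}$, equals $6[[x_1,x_2][x_3,x_4],x_5]$, and this gives centrality directly. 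Your Steps 1 and 2 are valid, but they only exchange $[x_1,x_2][x_3,x_4]$ for another element, $x_4[x_1,x_2,x_3]$, that also lies outside $\langle x_1[x_2,x_1]\rangle^T$.
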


\begin{proof}
Linearizing $x_1[x_2,x_1]$ and taking the commutator with $x_4$,
we obtain the polynomial identity
\[
g(x_1,x_2,x_3,x_4)
=
[x_1[x_2,x_3]-x_3[x_1,x_2],x_4]
\in\operatorname{Id}(A).
\]
We use the following equality
\begin{align*}
6\bigl[[x_1,x_2][x_3,x_4],x_5\bigr]
&=
\mathcal{A}_{x_1,x_2}\mathcal{A}_{x_3,x_4}
\Bigl(
g(x_1x_2,x_3,x_4,x_5)
+3g(x_1x_2,x_3,x_5,x_4)\\
&
+g(x_1x_2,x_5,x_3,x_4)
-g(x_1x_3,x_4,x_2,x_5)
-4g(x_1x_3,x_5,x_4,x_2)\\
&
+2g(x_3x_1,x_2,x_5,x_4)
+g(x_3x_1,x_4,x_5,x_2)
-3g(x_3x_5,x_1,x_4,x_2)\\
&
-2g(x_5x_1,x_2,x_3,x_4)
+2g(x_1,x_5x_3,x_4,x_2)+g(x_2,x_5,x_4,x_1x_3)\\
&
+g(x_2,x_4,x_5,x_3x_1)
-g(x_1,x_2,x_4,x_3x_5)
\Bigr).
\end{align*}
The right-hand side is a linear combination of consequences of $g$ and therefore belongs to $\operatorname{Id}(A)$.
Since $\operatorname{char}F=0$, it follows that
$[[x_1,x_2][x_3,x_4],x_5]\in\operatorname{Id}(A)$.
Hence $[x_1,x_2][x_3,x_4]\in\operatorname{Id}^z(A)$.

\end{proof}

Together with Lemma~\ref{prop:central-polynomials-G-A1star},
this gives the required bound.

\begin{theorem}\label{thm:alpha-zero-central}
Let $A$ be an associative algebra over a field $F$ of
characteristic zero. If
$x_1[x_2,x_1]\in\operatorname{Id}^z(A)$, then
$m_\lambda^z(A)\leq1$ for every $n\geq1$ and every
partition $\lambda\vdash n$.
\end{theorem}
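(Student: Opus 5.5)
The plan is to compare $A$ with the algebra $B=\mathcal G\oplus A_1^*$ of Lemma~\ref{prop:central-polynomials-G-A1star} and transfer its multiplicity bound to $A$.

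\emph{Step 1: inclusion of $T$-spaces.} By hypothesis $x_1[x_2,x_1]\in\operatorname{Id}^z(A)$. Lemma~\ref{lem:product-commutators-central} then gives $[x_1,x_2][x_3,x_4]\in\operatorname{Id}^z(A)$. Since $\operatorname{Id}^z(A)$ is a $T$-space, it contains the $T$-space generated by these two polynomials. By Lemma~\ref{prop:central-polynomials-G-A1star}, that $T$-space is exactly $\operatorname{Id}^z(B)$. Hence
\[
\operatorname{Id}^z(B)=\left\langle x_1[x_2,x_1],\,[x_1,x_2][x_3,x_4]\right\rangle^T\subseteq \operatorname{Id}^z(A).
\]

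\emph{Step 2: transfer to multiplicities.} The inclusion gives $P_n\cap\operatorname{Id}^z(B)\subseteq P_n\cap\operatorname{Id}^z(A)$. So there is a surjective homomorphism of $S_n$-modules $P_n^z(B)\to P_n^z(A)$, exactly as in the proof of the uniform-bound theorem and of Proposition~\ref{prop:central-cocharacters-direct-sum}. By complete reducibility, $P_n^z(A)$ is then isomorphic to a direct summand of $P_n^z(B)$. Therefore $m_\lambda^z(A)\le m_\lambda^z(B)$ for every $n$ and every $\lambda\vdash n$.

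\emph{Step 3: bound for $B$.} For $n\ge2$, Lemmas~\ref{prop:central-polynomials-G-A1star} and~\ref{lem:grassmann-central} give
\[
\chi_n^z(B)=\chi_{(n-1,1)}+\sum_{j=0}^{\lfloor (n-1)/2\rfloor}\chi_{(n-2j,1^{2j})}.
\]
The hooks $(n-2j,1^{2j})$ have an even number of boxes outside the first row, namely $2j$, so they are pairwise distinct. The partition $(n-1,1)$ has exactly one box outside the first row, so it is not among them. Hence every multiplicity of $B$ is $0$ or $1$. For $n=1$ the bound is trivial, since $P_1$ is irreducible.

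There is essentially no obstacle: the heavy lifting is done by Lemma~\ref{lem:product-commutators-central} and by the identification of $\operatorname{Id}^z(B)$ in Lemma~\ref{prop:central-polynomials-G-A1star}. The only point to check is the distinctness in Step 3, which holds by the parity count above and applies equally for small $n$.
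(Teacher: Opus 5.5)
Your proposal is correct and follows the paper's proof: Lemma~\ref{lem:product-commutators-central} supplies $[x_1,x_2][x_3,x_4]$, so $\operatorname{Id}^z(\mathcal G\oplus A_1^*)\subseteq\operatorname{Id}^z(A)$ by Lemma~\ref{prop:central-polynomials-G-A1star}, and hence $m_\lambda^z(A)\le m_\lambda^z(\mathcal G\oplus A_1^*)\le1$. Your parity check that $\chi_{(n-1,1)}$ is distinct from the even hooks, and your separate treatment of $n=1$, spell out details the paper leaves implicit.
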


\begin{proof}
Let $B=\mathcal G\oplus A_1^*$.
By Lemma~\ref{lem:product-commutators-central},
$[x_1,x_2][x_3,x_4]\in\operatorname{Id}^z(A)$.
This gives
$$
\left\langle
x_1[x_2,x_1],
\,
[x_1,x_2][x_3,x_4]
\right\rangle^T
\subseteq
\operatorname{Id}^z(A).
$$
By Lemma~\ref{prop:central-polynomials-G-A1star},
$\operatorname{Id}^z(B)
=
\left\langle
x_1[x_2,x_1],
\,
[x_1,x_2][x_3,x_4]
\right\rangle^T$,
which provides
$\operatorname{Id}^z(B)\subseteq\operatorname{Id}^z(A)$.
Finally, the same lemma gives $m_\lambda^z(A)\leq m_\lambda^z(B)\leq1$ for all $n\geq1$ and all $\lambda\vdash n$.

\end{proof}

If $\beta=0$, then
$[x_1,x_2]x_1\in\operatorname{Id}^z(A)$.
The same argument applies after reversing the order of the factors in each monomial. In particular,
$[x_1,x_2][x_3,x_4]\in\operatorname{Id}^z(A)$ and
Lemma~\ref{prop:central-polynomials-G-A1} yields
$\operatorname{Id}^z(\mathcal G\oplus A_1)
\subseteq\operatorname{Id}^z(A)$.
We therefore obtain the corresponding result.

\begin{theorem}\label{thm:beta-zero-central}
Let $A$ be an associative algebra over a field $F$ of
characteristic zero. If
$[x_1,x_2]x_1\in\operatorname{Id}^z(A)$, then
$m_\lambda^z(A)\leq1$ for all $n\geq1$ and every
partition $\lambda\vdash n$.
\end{theorem}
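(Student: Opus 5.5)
The plan is to reduce Theorem~\ref{thm:beta-zero-central} to Theorem~\ref{thm:alpha-zero-central} through the reversal anti-automorphism. Let $\ast\colon F\langle X\rangle\to F\langle X\rangle$ be the linear map reversing every monomial, $(x_{i_1}\cdots x_{i_k})^\ast=x_{i_k}\cdots x_{i_1}$. For an algebra $A$, let $A^{op}$ be its opposite algebra. Then $f\in\Id(A)$ if and only if $f^\ast\in\Id(A^{op})$, and since $Z(A^{op})=Z(A)$ as sets, also $f\in\Id^z(A)$ if and only if $f^\ast\in\Id^z(A^{op})$. We compute $([x_1,x_2]x_1)^\ast=x_1(x_2x_1-x_1x_2)=x_1[x_2,x_1]$. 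Hence the hypothesis $[x_1,x_2]x_1\in\Id^z(A)$ gives $x_1[x_2,x_1]\in\Id^z(A^{op})$.

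Applying Theorem~\ref{thm:alpha-zero-central} to $A^{op}$ then yields $m_\lambda^z(A^{op})\le1$ for all $n$ and all $\lambda\vdash n$. It remains to compare the central cocharacters of $A$ and $A^{op}$. Restricted to $P_n$, the map $\ast$ is a linear bijection sending $x_{\sigma(1)}\cdots x_{\sigma(n)}$ to $x_{\sigma(n)}\cdots x_{\sigma(1)}$. Renaming variables commutes with reversal, so $\ast$ commutes with the $S_n$-action $\tau\cdot f(x_1,\dots,x_n)=f(x_{\tau(1)},\dots,x_{\tau(n)})$. Therefore $\ast$ is an $S_n$-module automorphism of $P_n$ mapping $P_n\cap\Id^z(A)$ onto $P_n\cap\Id^z(A^{op})$. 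It induces an isomorphism $P_n^z(A)\cong P_n^z(A^{op})$, so $m_\lambda^z(A)=m_\lambda^z(A^{op})\le1$.

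The one point that needs care is this equivariance check: the $S_n$-action must be by substitution of variables, which commutes with $\ast$, and not by right multiplication in $FS_n$, which would not. Under the identification~\eqref{eq:regular-module}, $\ast$ corresponds to right multiplication by the longest permutation $w_0$. Right multiplication commutes with the left action, so the check is consistent.

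As an alternative, and as the paper's remark suggests, one could argue directly. Applying $\ast$ to the identity in the proof of Lemma~\ref{lem:product-commutators-central} gives $[x_1,x_2][x_3,x_4]\in\Id^z(A)$, since $([x_1,x_2][x_3,x_4])^\ast=[x_4,x_3][x_2,x_1]$ generates the same $T$-space. Lemma~\ref{prop:central-polynomials-G-A1} then gives $\Id^z(\mathcal G\oplus A_1)\subseteq\Id^z(A)$, and hence $m_\lambda^z(A)\le m_\lambda^z(\mathcal G\oplus A_1)\le1$. The opposite-algebra route avoids relying on the unproved Lemma~\ref{prop:central-polynomials-G-A1}, so I would present that one.
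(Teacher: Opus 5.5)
Your argument is correct, and it takes a somewhat different, more self-contained route than the paper.

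The paper mirrors the $\alpha=0$ argument by hand. It reverses the identity in Lemma~\ref{lem:product-commutators-central} to obtain $[x_1,x_2][x_3,x_4]\in\Id^z(A)$. It then compares $A$ with $\mathcal G\oplus A_1$ through Lemma~\ref{prop:central-polynomials-G-A1}, whose proof is only said to be ``similar'' to that of Lemma~\ref{prop:central-polynomials-G-A1star}.

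You instead prove once that reversal induces an $S_n$-isomorphism $P_n^z(A)\cong P_n^z(A^{op})$. Your equivariance check, that $\ast$ becomes right multiplication by $w_0$ under~\eqref{eq:regular-module}, is the right one. As a result, Theorem~\ref{thm:alpha-zero-central} transfers wholesale and nothing about $\mathcal G\oplus A_1$ is needed.

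The paper's route has the merit of naming an explicit comparison algebra, with $\Id^z(\mathcal G\oplus A_1)\subseteq\Id^z(A)$. Your route recovers this as well. Since $\mathcal G\cong\mathcal G^{op}$ and $(A_1^*)^{op}\cong A_1$, the same reversal principle gives $\Id^z(\mathcal G\oplus A_1)=\Id^z(\mathcal G\oplus A_1^*)^\ast$. So it derives Lemma~\ref{prop:central-polynomials-G-A1} from Lemma~\ref{prop:central-polynomials-G-A1star} rather than relying on it.
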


\subsection{The case $\alpha-\beta=0$}

Here the hypothesis becomes
$[x_1^2,x_2]\in\operatorname{Id}^z(A)$.
We shall show that the multilinear identities of $\mathcal G$
are central on $A$ in degrees at least four.
We begin with the following consequence of the hypothesis.

\begin{lemma}\label{lem:block-commutators-central}
Let $A$ be an algebra such that
$[x_1^2,x_2]\in\operatorname{Id}^z(A)$. Then
$[x_1,x_2x_3x_4]\in\operatorname{Id}^z(A)$.
\end{lemma}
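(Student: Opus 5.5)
The plan is to linearize the hypothesis and then write the triple product $x_2x_3x_4$ as a linear combination of Jordan products. Each commutator of $x_1$ with such a Jordan product is an endomorphic image of the linearized hypothesis. Since $\operatorname{Id}^z(A)$ is a $T$-space, every such image is again central.

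First, since $\operatorname{char}F=0$ and $\operatorname{Id}^z(A)$ is a $T$-space, multilinearizing $[x_1^2,x_2]$ in $x_1$ gives
\[
h(x_1,x_2,x_3)=[x_1x_3+x_3x_1,\,x_2]\in\operatorname{Id}^z(A).
\]
Writing $u\circ v=uv+vu$, this says $[u\circ v,w]$ is central for all substitutions. Hence $[w,u\circ v]=-h(u,w,v)$ is also central.

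The key step is the elementary identity, valid in any associative algebra,
\[
2\,x_2x_3x_4
=
x_2\circ(x_3x_4)+(x_2x_3)\circ x_4-(x_4x_2)\circ x_3 .
\]
It is checked by expanding. The right-hand side equals $(x_2x_3x_4+x_3x_4x_2)+(x_2x_3x_4+x_4x_2x_3)-(x_4x_2x_3+x_3x_4x_2)$, and everything cancels except $2x_2x_3x_4$. Taking the commutator with $x_1$ gives
\[
2[x_1,x_2x_3x_4]
=
-h(x_2,x_1,x_3x_4)-h(x_2x_3,x_1,x_4)+h(x_4x_2,x_1,x_3).
\]
Each term on the right is an endomorphic image of $h$, so the right-hand side lies in $\operatorname{Id}^z(A)$. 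Dividing by $2$, which is allowed since $\operatorname{char}F=0$, yields $[x_1,x_2x_3x_4]\in\operatorname{Id}^z(A)$.

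The only real obstacle is finding the decomposition of $x_2x_3x_4$ into Jordan products. Once it is written down, verifying it and substituting into $h$ is routine. Unlike in Lemma~\ref{lem:product-commutators-central}, no multiplication of central polynomials is required, so the lack of ideal structure in $\operatorname{Id}^z(A)$ causes no difficulty here.
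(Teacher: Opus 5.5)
Your proof is correct and is essentially the paper's argument. Your decomposition $2x_2x_3x_4=x_2\circ(x_3x_4)+(x_2x_3)\circ x_4-(x_4x_2)\circ x_3$ is exactly the one behind the paper's identity $2[x_1,x_2x_3x_4]=g(x_1,x_3x_4,x_2)+g(x_1,x_2x_3,x_4)-g(x_1,x_4x_2,x_3)$ with $g=[x_1,x_2x_3+x_3x_2]$; the only difference is cosmetic, since the paper passes to $[x_1,x_2^2]$ before linearizing and so avoids the signs you carry through $h$.
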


\begin{proof}
The hypothesis implies that
$[x_1,x_2^2]\in\operatorname{Id}^z(A)$.
Linearizing with respect to $x_2$, we obtain
\[
g(x_1,x_2,x_3)
=
[x_1,x_2x_3+x_3x_2]
\in\operatorname{Id}^z(A).
\]
Since $\operatorname{Id}^z(A)$ is a $T$-space, we have
\[
\begin{aligned}
2[x_1,x_2x_3x_4]
=g(x_1,x_3x_4,x_2)+g(x_1,x_2x_3,x_4)
-g(x_1,x_4x_2,x_3)
\in\operatorname{Id}^z(A).
\end{aligned}
\]
The result follows since $\operatorname{char}F=0$.

\end{proof}

This extends to commutators of arbitrary total degree at least four.

\begin{corollary}\label{cor:commutators-central}
Let $A$ be an algebra such that
$[x_1^2,x_2]\in\operatorname{Id}^z(A)$. If
$f,g\in F\langle X\rangle$ are multihomogeneous polynomials without
constant terms and $\deg f+\deg g\geq4$, then
$[f,g]\in\operatorname{Id}^z(A)$.
\end{corollary}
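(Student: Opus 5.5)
The plan is to reduce to monomials and use Lemma~\ref{lem:block-commutators-central} together with the $T$-space property. Since $[f,g]$ is bilinear in $(f,g)$ and $\operatorname{Id}^z(A)$ is a vector space, it suffices to treat $f=u$ and $g=v$ monomials (words in the variables) of positive degrees $p=\deg u$, $q=\deg v$ with $p+q\geq4$. Multihomogeneity guarantees that every monomial of $f$ has degree $\deg f$, and likewise for $g$, so every pair $(u,v)$ satisfies $p+q\geq 4$.

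We split into cases according to $(p,q)$. If $q\geq3$, write $v=v_1v_2v_3$ with each $v_i$ a nonempty word. Apply the endomorphism $x_1\mapsto u$, $x_i\mapsto v_{i-1}$ ($i=2,3,4$) to $[x_1,x_2x_3x_4]$; Lemma~\ref{lem:block-commutators-central} and the $T$-space property then give $[u,v]\in\operatorname{Id}^z(A)$. If $p\geq3$, then $[u,v]=-[v,u]$ and we apply the same argument with the roles swapped. The remaining case is $p=q=2$, so $u=ab$ and $v=cd$ with $a,b,c,d$ variables.

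For this case, first derive from the hypothesis that $[x_1,x_2x_3+x_3x_2]$ and, symmetrically, $[x_1x_2+x_2x_1,x_3]$ lie in $\operatorname{Id}^z(A)$. This is the linearization of $[x_1^2,x_2]$ and its counterpart with the factors interchanged. Now consider
\[
[x_1x_2,x_3x_4]=[x_1x_2+x_2x_1,x_3x_4]-[x_2x_1,x_3x_4].
\]
The first term is an endomorphic image of $[y_1y_2+y_2y_1,y_3]$, obtained by substituting $y_3\mapsto x_3x_4$, so it is central. This gives $[x_1x_2,x_3x_4]\equiv-[x_2x_1,x_3x_4]$, and the same argument in the second slot gives $[x_2x_1,x_3x_4]\equiv -[x_2x_1,x_4x_3]$. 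Hence, modulo $\operatorname{Id}^z(A)$, the polynomial $h=[x_1x_2,x_3x_4]$ is antisymmetric in $x_1,x_2$ and antisymmetric in $x_3,x_4$. Swapping the two arguments gives $h(x_1,x_2,x_3,x_4)=-h(x_3,x_4,x_1,x_2)$.

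The main obstacle is to conclude from these symmetries that $h$ itself is central; they alone do not force $h\equiv0$. I would exploit the Leibniz rule:
\[
[x_1x_2,x_3x_4]=[x_1x_2,x_3]x_4+x_3[x_1x_2,x_4].
\]
I would try combining this with the degree-three relation $[x_1,x_2x_3]\equiv-[x_1,x_3x_2]$, substituted with products, e.g.\ $g(x_1x_2,x_3,x_4)$ and $g(x_1,x_2,x_3x_4)$ from Lemma~\ref{lem:block-commutators-central}'s notation. The idea is to write $2h$ as an explicit combination of endomorphic images of the linearized generator, mimicking the identity used in the proof of Lemma~\ref{lem:block-commutators-central}. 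Concretely, I expect an identity of the form
\[
2[x_1x_2,x_3x_4]=\sum \pm\, g(\cdot,\cdot,\cdot),
\]
with arguments chosen among products of two variables and single variables, whose existence can be checked by expanding both sides in $P_4$. If that attempt fails, a fallback is to note that in the multihomogeneous (non-multilinear) setting repeated variables can be linearized. One then verifies the multilinear case by a direct computation in $P_4$ modulo the $S_4$-span of the four-variable consequences of $[x_1^2,x_2]$, which contain $[x_1,x_2x_3x_4]$ and all its permutations.
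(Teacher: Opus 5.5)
Your reduction to monomials and your treatment of the cases $q\geq3$ and $p\geq3$ match the paper's proof. The gap is the remaining case $p=q=2$, which you do not actually settle. The antisymmetry relations you derive are correct, but, as you note yourself, they are not sufficient. What follows them is only an expectation that some identity $2[x_1x_2,x_3x_4]=\sum\pm g(\cdots)$ exists, together with a fallback computation that you never carry out. As written, that case is a plan rather than a proof.

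The missing observation is the elementary identity $[ab,c]=[a,bc]+[b,ca]$, which gives
\[
[x_1x_2,x_3x_4]=[x_1,x_2x_3x_4]+[x_2,x_3x_4x_1].
\]
You can check it by expanding: $x_1x_2x_3x_4-x_2x_3x_4x_1+x_2x_3x_4x_1-x_3x_4x_1x_2$. Both summands are endomorphic images of $[x_1,x_2x_3x_4]$. So Lemma~\ref{lem:block-commutators-central}, the same lemma you already used for $q\geq3$, finishes this case at once. This is exactly the paper's argument.

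This identity also supplies what you were looking for. Combined with the identity in the proof of that lemma, it writes $2[x_1x_2,x_3x_4]$ explicitly as a signed sum of six values of $g$. It is also precisely what your fallback computation in $P_4$, modulo the $S_4$-span of $[x_1,x_2x_3x_4]$, would have found.

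Finally, no linearization is needed to handle repeated variables. For variables $a,b,c,d$, possibly repeated, $[ab,cd]$ is an endomorphic image of $[x_1x_2,x_3x_4]$, and $\operatorname{Id}^z(A)$ is closed under endomorphisms.
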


\begin{proof}
By the bilinearity of the commutator, we may assume that $f$ and $g$
are monomials. If $\deg g\geq3$, write $g=g_1g_2g_3$, where each
$g_i$ is a nonempty word. Then $[f,g]$ is an endomorphic image
of $[x_1,x_2x_3x_4]$ and hence belongs to
$\operatorname{Id}^z(A)$ by Lemma~\ref{lem:block-commutators-central}.
The case $\deg f\geq3$ follows by interchanging $f$ and $g$.

The only remaining case is $\deg f=\deg g=2$. Write $f=f_1f_2$ and
$g=g_1g_2$. Then
$$
[f_1f_2,g_1g_2]
=
[f_1,f_2g_1g_2]+[f_2,g_1g_2f_1].
$$
Both terms on the right-hand side are endomorphic images of
$[x_1,x_2x_3x_4]$. Therefore,
$[f,g]\in\operatorname{Id}^z(A)$.
\end{proof}

We also need another consequence of our assumption.

\begin{lemma}\label{lem:prefix-triple}
Let $A$ be an algebra such that
$[x_1^2,x_2]\in\operatorname{Id}^z(A)$. Then
$x_1[x_2,x_3,x_4]\in\operatorname{Id}^z(A)$.
\end{lemma}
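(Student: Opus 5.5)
The plan is to work modulo the $T$-space $\operatorname{Id}^z(A)$ and write $\equiv$ for congruence modulo it. By Corollary~\ref{cor:commutators-central}, every commutator $[f,g]$ with $\deg f+\deg g\geq4$ is $\equiv0$. Since $\operatorname{Id}^z(A)$ is only a $T$-space, I will never multiply a congruence by a variable. I will only use identities in the free algebra whose remaining terms are such commutators. The goal is to produce enough linear relations among the multilinear polynomials $F(a,b,c,d)=a[b,c,d]$, for $a,b,c,d$ a permutation of $x_1,\dots,x_4$, to force $F\equiv0$.

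First, set $u=[x_2,x_3]$ and expand two degree-four commutators by the Leibniz rule:
\[
[x_1u,x_4]=x_1[u,x_4]+[x_1,x_4]u,\qquad [x_1,x_4u]=[x_1,x_4]u+x_4[x_1,u].
\]
Both left-hand sides are $\equiv0$. Subtracting the two expansions gives
\[
x_1[x_2,x_3,x_4]\equiv x_4[x_1,u]=-x_4[x_2,x_3,x_1],
\]
that is,
\[
F(x_1,x_2,x_3,x_4)\equiv -F(x_4,x_2,x_3,x_1).
\]
So $F$ is antisymmetric modulo $\operatorname{Id}^z(A)$ under swapping its first and last arguments. In addition, $F$ is antisymmetric in its second and third arguments, and it satisfies the Jacobi relation in its last three arguments; both of these hold exactly in $F\langle X\rangle$. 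The same Leibniz computation also gives
\[
[x_1,x_4]u\equiv -x_1[u,x_4]=-F(x_1,x_2,x_3,x_4).
\]
Hence every product of two commutators can be rewritten in terms of $F$. Since $[[x_1,x_4],[x_2,x_3]]\equiv0$, we also get $[x_1,x_4][x_2,x_3]\equiv[x_2,x_3][x_1,x_4]$. Translated through the previous congruence, this becomes
\[
F(x_1,x_2,x_3,x_4)\equiv F(x_2,x_1,x_4,x_3).
\]

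Next, I would combine these $S_4$-relations. I expect this combinatorial step to be the main obstacle. Concretely, apply Jacobi to $F(x_4,x_2,x_3,x_1)$ to move $x_1$ out of the last slot. Then use the first/last swap and the double-transposition relation to return each resulting term to the shape $F(\cdot,\cdot,\cdot,x_4)$. The hope is to reach an equation of the form $cF(x_1,x_2,x_3,x_4)\equiv0$ with $c$ a nonzero integer; since $\operatorname{char}F=0$, this gives the claim.

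If the relations above turn out not to suffice, I will add more relations coming directly from the linearization $[y,ab+ba]$. The first substitutions to try are $y=x_1$, $a=x_2$, $b=[x_3,x_4]$ and $y=[x_1,x_2]$, $a=x_3$, $b=x_4$, expanded by the Leibniz rule. These are degree-four commutators, so they give further congruences among the products $x_i[\cdots]$ and $[\cdots]x_i$. The degree-four commutator $[[x_2,x_3,x_4],x_1]\equiv0$ converts any right-multiplied term $[x_2,x_3,x_4]x_1$ back into $F(x_1,x_2,x_3,x_4)$. If needed, I would settle the computation by projecting onto the isotypic components of $P_4$ that contain $F$, and checking that the span of all relations obtained this way contains $F$.
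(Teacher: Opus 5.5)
There is a genuine gap, and it is structural, not computational: the combinatorial step where you hope to reach $c\,x_1[x_2,x_3,x_4]\equiv0$ cannot succeed. Every congruence you use holds modulo the span of the multilinear degree-four commutators $[f,g]$, that is, modulo $P_4\cap[F\langle X\rangle,F\langle X\rangle]$. This covers Corollary~\ref{cor:commutators-central}, the Leibniz expansions, Jacobi, and also your fallback relations coming from $[y,ab+ba]$ and $[[x_2,x_3,x_4],x_1]$. But $x_1[x_2,x_3,x_4]$ is not in this subspace. Every commutator evaluates in $M_2(F)$ to a traceless matrix, whereas $x_1=e_{12}$, $x_2=e_{21}$, $x_3=x_4=e_{11}$ gives $x_1[x_2,x_3,x_4]=e_{11}$. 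Equivalently, its four monomials $x_1x_2x_3x_4$, $x_1x_3x_2x_4$, $x_1x_4x_2x_3$, $x_1x_4x_3x_2$ lie in four different cyclic classes. Hence no combination of your relations, and no isotypic projection of their span, can produce the conclusion.

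In fact, $\langle[x_1^2,x_2]\rangle^T$ is spanned by commutators $[f^2,g]$, so the lemma is not a $T$-space consequence of its hypothesis at all. You must use that $\operatorname{Id}^z(A)$ is the $T$-space of central polynomials of an algebra. Concretely, you have to pass to the $T$-ideal $\operatorname{Id}(A)$ in degree five, where you may also substitute into the outer variable of $[g,x_5]$. This is what the paper does: it writes $2[x_1[x_2,x_3,x_4],x_5]$ as a combination of substitutions of $h=[[x_1\circ x_2,x_3],x_4]$ whose outer variables run over all of $x_1,\ldots,x_5$.

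Your computation is still useful, because it isolates exactly what is missing. Write $w=[x_1,x_4]$ and keep $u=[x_2,x_3]$. Your congruences $wu\equiv -x_1[x_2,x_3,x_4]$ and $wu\equiv uw$ give $x_1[x_2,x_3,x_4]\equiv-\tfrac12\,w\circ u$, where $a\circ b=ab+ba$. So it suffices to show that $w\circ u$ is central, and this is the degree-five input. Since $[a\circ b,c]\in\operatorname{Id}^z(A)$, Jacobi gives $[a\circ b,[c,d]]=[[a\circ b,c],d]-[[a\circ b,d],c]\in\operatorname{Id}(A)$. The free-algebra identity
\[
[x_5,w\circ u]=[w\circ x_5,u]+[u\circ x_5,w]
\]
then shows $[w\circ u,x_5]\in\operatorname{Id}(A)$. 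Up to regrouping, this is exactly the paper's explicit identity. Its four terms of the form $h([x_1,x_4],x_5,\cdot,\cdot)$ and $h([x_2,x_3],x_5,\cdot,\cdot)$ sum to $[x_5,w\circ u]$. The remaining three terms have the form $[r,x_5]$ with $r\in\langle[x_1\circ x_2,x_3]\rangle^T$, and $r-w\circ u=2x_1[x_2,x_3,x_4]$.
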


\begin{proof}
Write $x_1\circ x_2=x_1x_2+x_2x_1$.
Linearizing $[[x_1^2,x_3],x_4]\in\operatorname{Id}(A)$
with respect to $x_1$, we obtain
\[
h(x_1,x_2,x_3,x_4)
=
[[x_1\circ x_2,x_3],x_4]
\in\operatorname{Id}(A).
\]
A direct computation gives
\begin{align*}
2[x_1[x_2,x_3,x_4],x_5]
=&h([x_1,x_4],x_5,x_2,x_3)
-h([x_1,x_4],x_5,x_3,x_2)+ h([x_2,x_3],x_1,x_4,x_5) \\
+&h([x_2,x_3],x_5,x_1,x_4)
-h([x_2,x_3],x_5,x_4,x_1)
+h(x_2\circ x_4,x_3,x_1,x_5)\\
-&h(x_3\circ x_4,x_2,x_1,x_5).
\end{align*}
Every term on the right-hand side is a consequence of
$h$ and therefore belongs to $\operatorname{Id}(A)$.
Since $\operatorname{char}F=0$, it follows that
$[x_1[x_2,x_3,x_4],x_5]\in\operatorname{Id}(A)$.
Hence $x_1[x_2,x_3,x_4]\in\operatorname{Id}^z(A)$.

\end{proof}

We now apply these two facts to the multilinear elements of
$T^{(3)}$.

\begin{lemma}\label{lem:T3-central}
Let $A$ be an algebra such that
$[x_1^2,x_2]\in\operatorname{Id}^z(A)$. Then, for every $n\geq4$, we have
\[
P_n\cap T^{(3)}
\subseteq
P_n\cap\operatorname{Id}^z(A).
\]
\end{lemma}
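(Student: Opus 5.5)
The plan is to describe a spanning set of $P_n\cap T^{(3)}$ and check that each spanning element lies in $\operatorname{Id}^z(A)$. Since $T^{(3)}$ is the $T$-ideal generated by $[x_1,x_2,x_3]$, its multilinear part of degree $n$ is spanned by multilinear products $u\,[w_1,w_2,w_3]\,v$. Here $u,v$ are monomials (possibly empty) and $w_1,w_2,w_3$ are nonempty monomials, with all variables distinct and total degree $n$. By multilinearization it is enough to handle these elements, because $\operatorname{Id}^z(A)$ is a $T$-space and hence closed under taking multilinear components of endomorphic images.

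Suppose first that $u$ is nonempty. Then $u[w_1,w_2,w_3]$ is an endomorphic image of $x_1[x_2,x_3,x_4]$, which is central by Lemma~\ref{lem:prefix-triple}. If $v$ is also nonempty, write
\[
u[w_1,w_2,w_3]v=u[w_1,w_2,w_3,v]+uv[w_1,w_2,w_3].
\]
The second term is an image of $x_1[x_2,x_3,x_4]$. For the first term, $[w_1,w_2,w_3,v]=[[w_1,w_2,w_3],v]$, so $u[[w_1,w_2,w_3],v]$ is an image of $x_1[x_2,x_3,x_4]$ after the substitution $x_2\mapsto[w_1,w_2]$, $x_3\mapsto w_3$, $x_4\mapsto v$. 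Thus every element with $u$ nonempty is central.

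Now suppose $u$ is empty. The element is either $[w_1,w_2,w_3]$ or $[w_1,w_2,w_3]v$. In the first case it equals $[[w_1,w_2],w_3]$, where $\deg[w_1,w_2]+\deg w_3\ge 4$ because $n\ge4$. This case is covered by Corollary~\ref{cor:commutators-central}, applied to the multihomogeneous polynomial $f=[w_1,w_2]$ and $g=w_3$; the corollary is stated for multihomogeneous $f,g$, which includes $f=[w_1,w_2]$. In the second case write
\[
[w_1,w_2,w_3]v=[w_1,w_2,w_3,v]+v[w_1,w_2,w_3].
\]
The term $v[w_1,w_2,w_3]$ is an image of $x_1[x_2,x_3,x_4]$. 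The term $[w_1,w_2,w_3,v]=[[w_1,w_2,w_3],v]$ is a commutator whose two arguments have total degree $n\ge4$, so Corollary~\ref{cor:commutators-central} applies to it as well.

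The only delicate point is the degree threshold. For $n=3$ the element $[x_1,x_2,x_3]$ itself need not be central, and the argument above uses $n\ge 4$ exactly when $u$ and $v$ are both empty. I would check carefully that Corollary~\ref{cor:commutators-central} genuinely allows $f$ to be a commutator rather than a monomial. Its proof reduces to monomials by bilinearity, and $[w_1,w_2]$ is a combination of the monomials $w_1w_2$ and $w_2w_1$, both of the same multidegree, so the reduction applies.
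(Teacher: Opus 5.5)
Your proof is correct and is essentially the paper's argument: the same spanning set $u[w_1,w_2,w_3]v$, the same case split on whether the outer factors are present, and the same two tools, Lemma~\ref{lem:prefix-triple} and Corollary~\ref{cor:commutators-central}. The only deviation is when both outer factors are present: you write $ucv=u[c,v]+uvc$ and treat both terms as images of $x_1[x_2,x_3,x_4]$ (via $x_2\mapsto[w_1,w_2]$), whereas the paper writes $pcq=[pc,q]+qpc$ and uses the corollary for $[pc,q]$; both are valid.
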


\begin{proof}
The space $P_n\cap T^{(3)}$ is spanned by polynomials of the form $p[f,g,h]q$, where $f,g,h$ are nonempty multilinear words and $p,q$ are possibly absent. Set $c=[f,g,h]$. 

Suppose first that $p$ is present. By Lemma~\ref{lem:prefix-triple}, $pc\in\operatorname{Id}^z(A)$. If $q$ is absent, there is nothing to prove. Otherwise, $pcq=[pc,q]+qpc$. The first summand is central by Corollary~\ref{cor:commutators-central}, since its total degree is $n\geq4$, while the second is central by Lemma~\ref{lem:prefix-triple}, applied to $(qp)c$. 

Now suppose that $p$ is absent and $q$ is present. In this case, $cq=[c,q]+qc$. The first summand belongs to $\operatorname{Id}^z(A)$ by  Corollary~\ref{cor:commutators-central} and the second belongs to $\operatorname{Id}^z(A)$ by Lemma~\ref{lem:prefix-triple}.

Finally, if both $p$ and $q$ are absent, then $c=[[f,g],h]$. Since its total degree is $n\geq4$, Corollary~\ref{cor:commutators-central} gives $c\in\operatorname{Id}^z(A)$. Therefore, every spanning polynomial of $P_n\cap T^{(3)}$ is central, which proves the desired inclusion.

\end{proof}

Since $T^{(3)}=\Id(\mathcal G)$, this inclusion allows us to compare the central cocharacters of $A$ with the ordinary cocharacters of $\mathcal G$. Together with the criterion in~\eqref{eq:degree-three-central}, it yields the following result.

\begin{theorem}\label{thm:square-central}
Let $A$ be an associative algebra over a field $F$ of characteristic zero. If
$[x_1^2,x_2]\in\operatorname{Id}^z(A)$, then
$m_\lambda^z(A)\leq1$ for all $n\geq1$ and every
partition $\lambda\vdash n$.
\end{theorem}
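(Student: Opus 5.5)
The plan is to handle small degrees and large degrees separately.

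\emph{Degrees $n\leq3$.} For $n\le 3$ the bound follows from the discussion in Section~2. In degrees one and two all multiplicities of $P_n$, and hence of its quotients, are at most one. In degree three the only possibly repeated constituent is $\chi_{(2,1)}$. The hypothesis is exactly \eqref{eq:degree-three-central} with $\alpha=\beta=1$, since $[x_1,x_2]x_1+x_1[x_1,x_2]=[x_1^2,x_2]$. Therefore $m_{(2,1)}^z(A)\le1$.

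\emph{Degrees $n\ge4$.} By Lemma~\ref{lem:T3-central},
\[
P_n\cap\Id(\mathcal G)=P_n\cap T^{(3)}\subseteq P_n\cap\Id^z(A).
\]
This inclusion induces a surjective homomorphism of $S_n$-modules
\[
P_n(\mathcal G)=\frac{P_n}{P_n\cap\Id(\mathcal G)}\longrightarrow \frac{P_n}{P_n\cap\Id^z(A)}=P_n^z(A).
\]
By complete reducibility in characteristic zero, $P_n^z(A)$ is isomorphic to a direct summand of $P_n(\mathcal G)$. Hence $m_\lambda^z(A)\le m_\lambda(\mathcal G)$ for every $\lambda\vdash n$. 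Lemma~\ref{lem:grassmann-central} gives
\[
\chi_n(\mathcal G)=\sum_{k=1}^n\chi_{(k,1^{n-k})},
\]
whose multiplicities are all $0$ or $1$. It follows that $m_\lambda^z(A)\le1$ for all $n\ge 4$ as well.

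There is essentially no obstacle left at this point. The substantive work is contained in Lemmas~\ref{lem:block-commutators-central}, \ref{lem:prefix-triple} and~\ref{lem:T3-central} and Corollary~\ref{cor:commutators-central}. The only point needing care is the restriction $n\ge4$ in Lemma~\ref{lem:T3-central}. It fails in degree three, where $[x_1,x_2,x_3]$ need not be central in $A$. That is why the degree-three case must instead be handled by the criterion~\eqref{eq:degree-three-central}.
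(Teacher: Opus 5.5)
Your proof is correct and follows the paper's argument essentially step for step. It handles $n\le 2$ trivially, uses the criterion \eqref{eq:degree-three-central} with $\alpha=\beta=1$ for $n=3$, and for $n\ge 4$ uses the inclusion $P_n\cap\Id(\mathcal G)=P_n\cap T^{(3)}\subseteq P_n\cap\Id^z(A)$ from Lemma~\ref{lem:T3-central} to conclude $m_\lambda^z(A)\le m_\lambda(\mathcal G)\le 1$.
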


\begin{proof}
For $n\in\{1,2\}$, the conclusion follows since $P_1$ and $P_2$
have multiplicities bounded by one. In degree three, it remains
only to consider the partition $\lambda=(2,1)$. Since
\[
[x_1^2,x_2]=[x_1,x_2]x_1+x_1[x_1,x_2]
\in\operatorname{Id}^z(A),
\]
the criterion in~\eqref{eq:degree-three-central},
with $\alpha=\beta=1$, gives $m_{(2,1)}^z(A)\leq1$.

Now assume that $n\geq4$. By Lemma~\ref{lem:T3-central}
and the equality $\Id(\mathcal G)=T^{(3)}$, we have
$P_n\cap\Id(\mathcal G)\subseteq P_n\cap\Id^z(A)$.
Consequently, $m_\lambda^z(A)\leq m_\lambda(\mathcal G)\leq1$, for every $\lambda\vdash n.$
\end{proof}

\subsection{The case $\alpha+\beta=0$}

We now consider the case $[x_1,x_2,x_1]\in\Id^z(A)$
and establish the following consequence.

\begin{lemma}\label{conseqcomutadortriplo}
Let $A$ be an algebra such that $[x_1,x_2,x_1]\in\Id^z(A)$. Then
$[x_1,x_2,x_3]\in\Id^z(A)$.
\end{lemma}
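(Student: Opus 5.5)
The plan is to work modulo the $T$-space $\Id^z(A)$ and show that, modulo it, the left-normed commutator $c(x_1,x_2,x_3)=[x_1,x_2,x_3]$ is fully alternating in its three variables. The Jacobi identity will then force $3c\in\Id^z(A)$.

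First, since $\Id^z(A)$ is a $T$-space and $\operatorname{char}F=0$, we may linearize the hypothesis $[x_1,x_2,x_1]\in\Id^z(A)$ with respect to $x_1$. Concretely, substitute $x_1\mapsto x_1+x_3$ and subtract the two pure terms. This gives
\[
[x_1,x_2,x_3]+[x_3,x_2,x_1]\in\Id^z(A),
\]
that is, $c(x_1,x_2,x_3)\equiv -c(x_3,x_2,x_1)$ modulo $\Id^z(A)$. So $c$ is skew-symmetric under the transposition $(13)$ of the variables modulo $\Id^z(A)$.

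Next, $c$ is already skew-symmetric under $(12)$ in the free algebra, because $[x_1,x_2]=-[x_2,x_1]$. Since $\Id^z(A)$ is invariant under permutations of variables, the set of permutations $\sigma\in S_3$ for which $\sigma c\equiv \operatorname{sgn}(\sigma)c$ modulo $\Id^z(A)$ is a subgroup. This subgroup contains $(12)$ and $(13)$, and these generate $S_3$. Hence $c$ is alternating modulo $\Id^z(A)$ under all of $S_3$. In particular, it is invariant under the cyclic permutations, which are even.

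Finally, apply the Jacobi identity in the form
\[
[x_1,x_2,x_3]+[x_2,x_3,x_1]+[x_3,x_1,x_2]=0 .
\]
Each of the last two summands is obtained from $c$ by a $3$-cycle, so each is congruent to $c$ modulo $\Id^z(A)$. Therefore $3[x_1,x_2,x_3]\in\Id^z(A)$, and since $\operatorname{char}F=0$ we conclude $[x_1,x_2,x_3]\in\Id^z(A)$.

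There is no real obstacle here. The only point needing care is that the congruences are taken in a $T$-space rather than a $T$-ideal. This is harmless, because only linear combinations and permutations of variables are used, and no multiplication by further variables is needed.
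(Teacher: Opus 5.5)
Your proof is correct and follows essentially the same route as the paper: linearize $[x_1,x_2,x_1]$, deduce that $[x_1,x_2,x_3]$ is congruent to its two cyclic shifts modulo the $T$-space, and conclude $3[x_1,x_2,x_3]\equiv 0$ from the Jacobi identity. The only differences are cosmetic. You obtain cyclic invariance from full alternation via the subgroup generated by $(12)$ and $(13)$, whereas the paper writes out the two congruences directly. The paper also works modulo the smaller $T$-space $\langle [x_1,x_2,x_1]\rangle^T$, and your argument applies there verbatim.
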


\begin{proof}
Observe that the linearization of the polynomial $[x_1,x_2,x_1]$ gives
$[x_1,x_2,x_3]+[x_3,x_2,x_1]$. Consider
$
I=\left\langle [x_1,x_2,x_1]\right\rangle^T.
$
It follows that
$$
[x_1,x_2,x_3]
\equiv-[x_3,x_2,x_1]
=[x_2,x_3,x_1]
\pmod I.
$$
Similarly, applying the linearization after permuting the variables, we obtain
$$
[x_1,x_2,x_3]
=-[x_2,x_1,x_3]
\equiv [x_3,x_1,x_2]
\pmod I.
$$
Therefore, the Jacobi identity gives
$$
0=[x_1,x_2,x_3]+[x_2,x_3,x_1]+[x_3,x_1,x_2]
\equiv 3[x_1,x_2,x_3]
\pmod I.
$$
Since $\operatorname{char}F=0$, we conclude that
$[x_1,x_2,x_3]\in I$.

\end{proof}

We shall also need the following inclusion involving commutator ideals.

\begin{lemma}\label{lem:T2T3}
$T^{(2)}T^{(3)}+T^{(3)}T^{(2)}
\subseteq T^{(4)}.$
\end{lemma}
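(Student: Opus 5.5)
We must show $T^{(2)}T^{(3)}+T^{(3)}T^{(2)}\subseteq T^{(4)}$. This is the classical fact that a product of two commutators containing a triple commutator lies in the $T$-ideal generated by $[x_1,x_2,x_3,x_4]$. It is a $T$-ideal statement, so ideal closure is available. Since $T^{(4)}$ is a two-sided ideal, and $T^{(2)}$, $T^{(3)}$ are spanned by elements $u[a,b]v$ and $u[a,b,c]v$ (with $u,v$ possibly empty), it suffices to prove two memberships: $[a,b][c,d,e]\in T^{(4)}$ and $[c,d,e][a,b]\in T^{(4)}$ for arbitrary polynomials $a,\dots,e$. Indeed, a general product $u[a,b]v\,w[c,d,e]z$ can be rewritten by moving $vw$ past $[c,d,e]$ using $[vw,[c,d,e]]\in T^{(4)}$. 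This commutator lies in $T^{(4)}$ because it equals $-[c,d,e,vw]$. So the product is reduced modulo $T^{(4)}$ to $u[a,b][c,d,e]vwz$. Thus we may work with the generic multilinear elements $[x_1,x_2][x_3,x_4,x_5]$ and $[x_3,x_4,x_5][x_1,x_2]$.

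\textbf{Step 1.} Since $[[x_3,x_4,x_5],[x_1,x_2]]\in T^{(4)}$ (it is $-[x_1,x_2,[x_3,x_4,x_5]]$, and an inner-derivation expansion shows it is a combination of left-normed commutators of length at least $4$), the two products agree modulo $T^{(4)}$. So it suffices to treat $w=[x_1,x_2][x_3,x_4,x_5]$.

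\textbf{Step 2.} Use the Leibniz rule to write
\[
[x_1,x_2][y,x_5]=[\,[x_1,x_2]y,\,x_5]-[x_1,x_2,x_5]\,y, \qquad y=[x_3,x_4].
\]
This expresses $w$ through commutators $[u,x_5]$ with $u=[x_1,x_2][x_3,x_4]$, plus products of type $[x_1,x_2,x_5][x_3,x_4]$. The key known identity is that $[[x_1,x_2][x_3,x_4],x_5]$ and related elements reduce, modulo $T^{(4)}$, to a symmetric expression. Concretely, I will use the standard lemma (as in Latyshev/Volichenko) that
\[
[x_1,x_2][x_3,x_4,x_5]+[x_1,x_5][x_3,x_4,x_2]\in T^{(4)}.
\]
This is the skew-symmetry of $w$ in $x_2,x_5$, proved by expanding $[[x_1,x_2][x_3,x_4],x_5,\,\cdot\,]$-type elements. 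Combined with the Jacobi identity and the antisymmetry in $x_1,x_2$ and in $x_3,x_4$, $w$ becomes alternating in enough variables that a triangular system in the $S_5$-module spanned by the permuted $w$'s forces $c\,w\in T^{(4)}$ with $c\neq 0$. Characteristic zero then finishes the argument.

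\textbf{Main obstacle.} The main difficulty is producing the explicit relation. I would first try to compute $[x_1,x_2,[x_3,x_4],x_5]$ and $[[x_1,x_2][x_3,x_4],x_5]$-type elements, which lie in $T^{(4)}$ after substituting $x_i\mapsto$ products. A concrete source is the element $[x_1, x_2, x_3x_6, x_5]\in T^{(4)}$. Expanding it by Leibniz yields $w$-type terms plus terms already in $T^{(4)}$, and then solving the resulting linear system is a finite (computer-checkable) calculation like those in Lemmas~\ref{lem:product-commutators-central} and~\ref{lem:prefix-triple}.
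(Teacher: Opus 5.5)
Your preliminary reductions are sound. Since $T^{(4)}$ is an ideal and $[vw,[c,d,e]]=-[c,d,e,vw]\in T^{(4)}$, it suffices to treat $[x_1,x_2][x_3,x_4,x_5]$. The step $[[x_3,x_4,x_5],[x_1,x_2]]\in T^{(4)}$ is immediate, because this is simply an evaluation of $[y_1,y_2,y_3,y_4]$; no Jacobi expansion is needed. The relation you cite as a ``standard lemma'' is also correct, and it is exactly the paper's relation (1) in another form. Expanding $[x_3,x_4,x_2x_5,x_1]\in T^{(4)}$ (your suggested $[x_1,x_2,x_3x_6,x_5]$ after renaming) by the Leibniz rule gives $[x_2,x_1][x_3,x_4,x_5]+[x_3,x_4,x_2][x_5,x_1]\equiv 0$. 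Centrality of triple commutators modulo $T^{(4)}$ then turns this into your identity.

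The gap is the final step. It is the heart of the lemma, and you only assert it. Write $w(y_1,\dots,y_5)=[y_1,y_2][y_3,y_4,y_5]$. Your three symmetries give sign-alternation under the position swaps $(1\,2)$, $(3\,4)$, $(2\,5)$, and these generate only $S_{\{1,2,5\}}\times S_{\{3,4\}}$. The Jacobi identity cyclically permutes positions $3,4,5$, which leaves this subgroup. So the three Jacobi terms are not visibly $\pm w$, and no ``triangular system'' has actually been exhibited.

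The paper closes this with a second expansion. The element $[x_1,x_2x_4,x_3,x_5]\in T^{(4)}$ produces six products, four of which cancel in pairs by (1). What remains is $[x_1,x_2][x_4,x_3,x_5]\equiv-[x_1,x_4][x_2,x_3,x_5]$, i.e.\ $w$ is skew in positions $2,3$. Then $(1\,2),(2\,3),(2\,5),(3\,4)$ generate $S_5$, so $w$ is fully alternating modulo $T^{(4)}$, and Jacobi gives $3w\equiv0$ at once.

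Alternatively, your symmetries alone do suffice, but you must show it. Set $\Psi_{ab}=\operatorname{sgn}(\sigma)\,w(x_{\sigma(1)},\dots,x_{\sigma(5)})$ for any $\sigma$ with $\{\sigma(3),\sigma(4)\}=\{a,b\}$; this is well defined modulo $T^{(4)}$. Jacobi then says that $\Psi$ sums to zero over the three pairs inside any $3$-subset of $\{1,\dots,5\}$. Add the relations for $\{a,c,d\},\{a,c,e\},\{a,d,e\}$ and subtract the one for $\{c,d,e\}$; this gives $\Psi_{ac}+\Psi_{ad}+\Psi_{ae}=0$, and likewise $\Psi_{bc}+\Psi_{bd}+\Psi_{be}=0$. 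The relations for $\{a,b,c\},\{a,b,d\},\{a,b,e\}$ then give $3\Psi_{ab}=0$. One of these two arguments has to be supplied for the proposal to be a proof.
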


\begin{proof}
Let $f(x_1,x_2,x_3,x_4,x_5) = [x_1,x_2,x_3][x_4,x_5]$. Modulo $T^{(4)}$, it follows that
\begin{equation}\label{relation1}
0 \equiv [x_1,x_2,x_3x_4,x_5]
\equiv [x_1,x_2,x_3][x_4,x_5]
+[x_3,x_5][x_1,x_2,x_4].
\end{equation}
Since $[x_1,x_2,x_3]$ is central modulo $T^{(4)}$, we have $f(x_1,x_2,x_3,x_4,x_5) \equiv - f(x_1,x_2,x_4,x_3,x_5)$.

Now, expanding $[x_1,x_2x_4,x_3,x_5]$ via the Leibniz rule yields
\begin{align*}
0\equiv [x_1,x_2][x_4,x_3,x_5]
+[x_1,x_2,x_5][x_4,x_3]
+[x_1,x_2,x_3][x_4,x_5]\\
+[x_2,x_5][x_1,x_4,x_3]
+[x_2,x_3,x_5][x_1,x_4]
+[x_2,x_3][x_1,x_4,x_5].
\end{align*}
By \eqref{relation1}, we obtain
$[x_1,x_2][x_4,x_3,x_5]
\equiv -[x_2,x_3,x_5][x_1,x_4]
\pmod{T^{(4)}}$.
Combined with the centrality of $[x_1,x_2,x_3]$
modulo $T^{(4)}$, this leads to
$f(x_1,x_2,x_3,x_4,x_5)
\equiv -f(x_5,x_2,x_3,x_4,x_1)$.
Notice that the relations obtained above imply that $f$
is alternating modulo $T^{(4)}$.
On the other hand, the Jacobi identity yields 
$$
f(x_1,x_2,x_3,x_4,x_5)
+f(x_2,x_3,x_1,x_4,x_5)
+f(x_3,x_1,x_2,x_4,x_5)=0.
$$
Therefore, since $f$ is alternating modulo $T^{(4)}$, all three summands are equivalent, yielding $3f(x_1,x_2,x_3,x_4,x_5) \equiv 0 \pmod{T^{(4)}}$. Hence, $[x_1,x_2,x_3][x_4,x_5] \in T^{(4)}$.

Now, the identity $[x_6x_4,x_5]=x_6[x_4,x_5]+[x_6,x_5]x_4$ provides
$$
[x_1,x_2,x_3]x_6[x_4,x_5]
=
[x_1,x_2,x_3][x_6x_4,x_5]
-[x_1,x_2,x_3][x_6,x_5]x_4
\in T^{(4)}.
$$
Hence, $T^{(3)}T^{(2)}\subseteq T^{(4)}$. Finally, observe that
$$
[x_1,x_2]x_6[x_3,x_4,x_5]
\equiv
[x_3,x_4,x_5][x_1,x_2]x_6
\equiv0\pmod{T^{(4)}}.
$$
It follows that $T^{(2)}T^{(3)}\subseteq T^{(4)}$ and consequently $T^{(2)}T^{(3)}+T^{(3)}T^{(2)}\subseteq T^{(4)}$.

\end{proof}

We can now show that every element of $T^{(3)}$ is central on $A$.

\begin{lemma}\label{triple-ideal-central}
Let $A$ be an algebra such that $[x_1,x_2,x_1]\in\operatorname{Id}^z(A)$. Then
$T^{(3)}\subseteq\operatorname{Id}^z(A).$

\end{lemma}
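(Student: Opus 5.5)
By Lemma~\ref{conseqcomutadortriplo}, $[x_1,x_2,x_3]\in\Id^z(A)$. Since $\Id^z(A)$ is a $T$-space, this already gives $[f,g,h]\in\Id^z(A)$ for all polynomials $f,g,h$. Because $\Id^z(A)$ is a $T$-space and $\operatorname{char}F=0$, it suffices to show that every multilinear element of $T^{(3)}$ is central. As in the proof of Lemma~\ref{lem:T3-central}, $P_n\cap T^{(3)}$ is spanned by products $p\,c\,q$, where $c=[f,g,h]$ with nonempty words $f,g,h$, and $p,q$ are words that may be absent. The case where $p$ and $q$ are both absent is the preceding remark, so the work is in handling the outer factors.

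The key observation is that centrality of $[x_1,x_2,x_3]$ means $[x_1,x_2,x_3,x_4]\in\Id(A)$, that is, $T^{(4)}\subseteq\Id(A)$. I will show that every $pcq$ is congruent modulo $T^{(4)}$ to an element of $\Id^z(A)$. Since $T^{(4)}\subseteq\Id(A)\subseteq\Id^z(A)$, this gives $pcq\in\Id^z(A)$.

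First suppose $q$ is present. Write $cq=[c,q]+qc$. The term $[c,q]=[f,g,h,q]$ lies in $T^{(4)}$, so modulo $T^{(4)}$ we can move $q$ to the left of $c$. It therefore suffices to treat $pc$ with $p$ a nonempty word. For this I use the Leibniz rule:
\[
[pf,g,h]=\bigl[p[f,g]+[p,g]f,\,h\bigr]=p[f,g,h]+[p,h][f,g]+[p,g][f,h]+[p,g,h]f .
\]
The left-hand side is central, being an endomorphic image of $[x_1,x_2,x_3]$. The term $[p,g,h]f$ equals $f[p,g,h]+[p,g,h,f]$, which is central modulo $T^{(4)}$. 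Hence
\[
pc\equiv -[p,h][f,g]-[p,g][f,h]\pmod{\Id^z(A)}.
\]
The problem is thus reduced to showing that products of two commutators $[a,b][c,d]$ are central on $A$.

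This reduction is the step I expect to be the main obstacle. The plan is to show $[[x_1,x_2][x_3,x_4],x_5]\in T^{(4)}$, using the Leibniz rule together with Lemma~\ref{lem:T2T3}. Expanding gives
\[
[[x_1,x_2][x_3,x_4],x_5]=[x_1,x_2][x_3,x_4,x_5]+[x_1,x_2,x_5][x_3,x_4],
\]
and both summands lie in $T^{(2)}T^{(3)}+T^{(3)}T^{(2)}\subseteq T^{(4)}\subseteq\Id(A)$ by Lemma~\ref{lem:T2T3}. Therefore $[x_1,x_2][x_3,x_4]\in\Id^z(A)$, and so does every endomorphic image of it. Substituting this into the reduction shows $pc\in\Id^z(A)$. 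Since moving $q$ to the left only introduces terms in $T^{(4)}\subseteq\Id(A)$, it follows that $pcq\in\Id^z(A)$ for every spanning element. This gives $T^{(3)}\subseteq\Id^z(A)$.
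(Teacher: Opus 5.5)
Your overall structure is sound, and your Leibniz expansion of $[pf,g,h]$ is correct. However, the step that disposes of the term $[p,g,h]f$ does not work as written. You rewrite it as $f[p,g,h]+[p,g,h,f]$ and call it central modulo $T^{(4)}$. The second summand does lie in $T^{(4)}\subseteq\Id(A)$. The first summand, $f[p,g,h]$, is again a nonempty word times a triple commutator, which is exactly the kind of element $pc$ you are in the middle of proving central. So the argument is circular at that point, and no degree decreases that would turn it into an induction. Knowing that $[p,g,h]$ commutes with $f$ modulo $T^{(4)}$ is not the same as knowing that $[p,g,h]f$ is central: a product of a central element with an arbitrary element need not be central.

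The repair uses the lemma you already invoke at the end:
\[
\bigl[[p,g,h]f,x\bigr]=[p,g,h][f,x]+[p,g,h,x]f\in T^{(3)}T^{(2)}+T^{(4)}\subseteq T^{(4)}
\]
by Lemma~\ref{lem:T2T3}, so $[p,g,h]f\in\Id^z(A)$. Once Lemma~\ref{lem:T2T3} is used this way, the reduction to products of two commutators becomes unnecessary. Directly, $[pc,x]=p[c,x]+[p,x]c\in T^{(4)}+T^{(2)}T^{(3)}\subseteq T^{(4)}$, and your move of $q$ to the left, or a similar expansion, handles $cq$ and $pcq$. This is precisely the paper's proof. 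It checks the four prototypes $c$, $x_4c$, $cx_5$ and $x_4cx_5$ by expanding their commutators with $x_6$ and applying Lemma~\ref{lem:T2T3}. All the real content lies in that lemma; the expansion of $[pf,g,h]$ is a detour that adds nothing.
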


\begin{proof}
By Lemma~\ref{conseqcomutadortriplo},
$[x_1,x_2,x_3]$ belongs to $\Id^z(A)$. Hence
$[x_1,x_2,x_3,x_4]\in\Id(A)$ and therefore $T^{(4)}\subseteq\Id(A)$.

Let $c=[x_1,x_2,x_3]$. Since $\Id^z(A)$ is invariant under endomorphisms, it is enough to consider the polynomials
$c$, $x_4c$, $cx_5$ and $x_4cx_5$. The first one is central by
Lemma~\ref{conseqcomutadortriplo}. For the remaining cases, we have
$$
\begin{aligned}
[x_4c,x_6]
    &=x_4[c,x_6]+[x_4,x_6]c,\\
[cx_5,x_6]
    &=c[x_5,x_6]+[c,x_6]x_5,\\
[x_4cx_5,x_6]
    &=x_4c[x_5,x_6]+x_4[c,x_6]x_5+[x_4,x_6]cx_5.
\end{aligned}
$$
Since $[c,x_6]\in T^{(4)}$, Lemma~\ref{lem:T2T3} shows that every term on the right-hand side of these equalities belongs to $T^{(4)}\subseteq\Id(A)$. Consequently,
$x_4c$, $cx_5$ and $x_4cx_5$ belong to $\Id^z(A)$ and hence $T^{(3)}\subseteq\Id^z(A)$. \end{proof}

Comparing the central cocharacters of $A$ with the ordinary cocharacters of $\mathcal G$, we obtain the following result.

\begin{theorem}\label{central-triple-multiplicities}
Let $A$ be an associative algebra over a field $F$ of
characteristic zero. If
$[x_1,x_2,x_1]\in\operatorname{Id}^z(A)$, then
$m_\lambda^z(A)\leq1$ for every $n\geq1$ and every
partition $\lambda\vdash n$.
\end{theorem}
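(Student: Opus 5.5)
The plan mirrors the proof of Theorem~\ref{thm:square-central}: handle low degrees directly, then bound the central cocharacter of $A$ by the ordinary cocharacter of $\mathcal G$.

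\textbf{Low degrees.} For $n\in\{1,2\}$ there is nothing to prove, since $P_1$ and $P_2$ already have all multiplicities at most one. In degree three only $\lambda=(2,1)$ needs attention. Using
\[
[x_1,x_2,x_1]=[x_1,x_2]x_1-x_1[x_1,x_2]\in\Id^z(A),
\]
the criterion~\eqref{eq:degree-three-central} with $\alpha=1$, $\beta=-1$ gives $m_{(2,1)}^z(A)\le 1$. Alternatively, degree three is covered by the general argument below.

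\textbf{General degree.} By Lemma~\ref{triple-ideal-central}, $T^{(3)}\subseteq\Id^z(A)$. Since $\Id(\mathcal G)=T^{(3)}$ by Lemma~\ref{lem:grassmann-central}, we get
\[
P_n\cap\Id(\mathcal G)\subseteq P_n\cap\Id^z(A)
\]
for every $n$. This inclusion induces a surjection of $S_n$-modules $P_n(\mathcal G)\to P_n^z(A)$. Hence
\[
m_\lambda^z(A)\le m_\lambda(\mathcal G)\le 1,
\]
where the last inequality holds because $\chi_n(\mathcal G)=\sum_k\chi_{(k,1^{n-k})}$ is multiplicity free by Lemma~\ref{lem:grassmann-central}.

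There is no real obstacle here, since the substantive work is already contained in Lemmas~\ref{conseqcomutadortriplo}, \ref{lem:T2T3} and~\ref{triple-ideal-central}. The only point to check is that the inclusion of multilinear components, rather than of whole $T$-spaces, is enough to produce the surjection. It is, because both quotients are formed from $P_n$.
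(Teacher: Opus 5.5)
Your proof is correct and follows the paper's own argument: Lemma~\ref{triple-ideal-central} gives $\Id(\mathcal G)=T^{(3)}\subseteq\Id^z(A)$, and from this $m_\lambda^z(A)\le m_\lambda(\mathcal G)\le 1$. Your separate treatment of degrees $n\le 3$ is harmless but redundant, since the comparison with $\mathcal G$ already covers every degree.
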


\begin{proof}
Lemma \ref{triple-ideal-central} implies
$\Id(\mathcal G)=T^{(3)}\subseteq\Id^z(A)$.
This inclusion implies that
$P_n\cap\Id(\mathcal G)\subseteq P_n\cap\Id^z(A)$.
Consequently, $m_\lambda^z(A)\leq m_\lambda(\mathcal G)$,
for every $\lambda\vdash n$.
Finally, since Lemma \ref{lem:grassmann-central} establishes that
$m_\lambda(\mathcal G)\leq1$, we conclude that
$m_\lambda^z(A)\leq1$. \end{proof}

\subsection{The case $\alpha\beta(\alpha+\beta)(\alpha-\beta)\neq0$}

We now assume that
\[
\alpha[x_1,x_2]x_1+\beta x_1[x_1,x_2]\in\Id^z(A)
\quad \text{for} \quad
\alpha\beta(\alpha+\beta)(\alpha-\beta)\neq0.
\]
We shall compare the central cocharacters of $A$ with those of
$\mathcal G$ in degrees at least four.

Set $t=\beta/\alpha$, so that $t(t-1)(t+1)\neq0$.
Interchanging $x_1$ and $x_2$, multiplying by $-1/\alpha$ and
linearizing with respect to $x_2$, we obtain
\[
q_t(x_1,x_2,x_3)
=
[x_1,x_2]x_3+[x_1,x_3]x_2
+t\bigl(x_2[x_1,x_3]+x_3[x_1,x_2]\bigr)
\in\Id^z(A).
\]
We therefore set
\[
\mathcal I_t
=
\bigl\langle[q_t(x_1,x_2,x_3),x_4]\bigr\rangle_T
\subseteq\Id(A).
\]
The next two lemmas provide the central polynomials needed for
the comparison with $\mathcal G$.

\begin{lemma}\label{lem:generic-long-commutator} For every $t\notin \{0,1,-1\}$ we have $[[x_1,x_2x_3x_4],x_5]\in\mathcal I_t$.
\end{lemma}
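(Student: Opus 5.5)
The plan is to work inside the auxiliary space
\[
W_t=\{f\in F\langle X\rangle \mid [f,x_{m}]\in\mathcal I_t \text{ for a variable } x_m \text{ not occurring in } f\}.
\]
Since $\mathcal I_t$ is a $T$-ideal, $W_t$ is a $T$-space. It contains $q_t$ by definition of $\mathcal I_t$. It also contains $\mathcal I_t$ itself, because $\mathcal I_t$ is an ideal, so $[g,x]\in\mathcal I_t$ for every $g\in\mathcal I_t$. In particular $[W_t,x]\subseteq\mathcal I_t\subseteq W_t$, and the statement is equivalent to showing that $[x_1,x_2x_3x_4]\in W_t$. This reduces the problem to a linear-algebra question in the multilinear component $P_4$ modulo $P_4\cap W_t$, in the spirit of Lemma~\ref{lem:block-commutators-central}.

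First I rewrite the generator in a form adapted to the case $\alpha=\beta$. Put
\[
a(x_1,x_2,x_3)=[x_1,x_2]x_3+[x_1,x_3]x_2,\qquad b(x_1,x_2,x_3)=x_2[x_1,x_3]+x_3[x_1,x_2].
\]
Then $q_t=a+tb$, and the Leibniz rule gives $a+b=[x_1,x_2\circ x_3]$. Modulo $W_t$ we therefore have $b\equiv -t^{-1}a$ and $[x_1,x_2\circ x_3]\equiv (1-t^{-1})a$. Since $t\neq0$, this lets every occurrence of $b$ be traded for $a$.

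Next I produce degree-four consequences by substituting products into $q_t$. The substitutions to use are $q_t(x_1x_2,x_3,x_4)$, $q_t(x_1,x_2x_3,x_4)$, $q_t(x_1,x_2,x_3x_4)$ and their images under permutations of the variables. I also use the elements $[q_t(x_1,x_2,x_3),x_4]\in\mathcal I_t\subseteq W_t$. Each of these is expanded by the Leibniz rule into a combination of the normalized elements $x_ix_j[x_k,x_l]$, $x_i[x_k,x_l]x_j$ and $[x_k,x_l]x_ix_j$, together with the commutator part $[x_i,x_j,x_k,x_l]$. Working modulo $W_t$, this gives a homogeneous linear system in $P_4/(P_4\cap W_t)$. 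The target $[x_1,x_2x_3x_4]$, expanded by Leibniz as $x_2x_3[x_1,x_4]+x_2[x_1,x_3]x_4+[x_1,x_2]x_3x_4$, must be shown to lie in the span of the relations. To keep the system small, I will first apply the symmetrizer/alternator in the variables $x_2,x_3,x_4$, as in the proof of Lemma~\ref{lem:product-commutators-central}. This cuts the system down to isotypic components, most importantly the $(2,1)$-type and $(3)$-type pieces in those three variables.

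The main obstacle is actually finding an explicit combination. I expect the coefficient determinant of the reduced system to be a polynomial in $t$ whose roots lie in $\{0,1,-1\}$. These exceptional values match the cases already handled separately: $t=1$ in Lemma~\ref{lem:block-commutators-central}, where the conclusion holds in $W_t$ already and needs no commutator, and $t=0$ and $t=-1$, which give genuinely different varieties. My approach is to write $2[x_1,x_2x_3x_4]$ (or a suitable scalar multiple of it) as a combination $\sum c_i(t)\,q_t(u_i,v_i,w_i)$ plus elements of $\mathcal I_t$. The coefficients $c_i(t)$ are rational functions with denominators among $t$, $t-1$ and $t+1$, which I will determine by comparing coefficients of the normalized monomials. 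Clearing denominators and invoking $\operatorname{char}F=0$ then gives $[[x_1,x_2x_3x_4],x_5]\in\mathcal I_t$.
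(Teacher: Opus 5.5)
\textbf{Genuine gap.} Your reduction to $W_t$ and the rewriting $b\equiv -t^{-1}a$ are correct, but they are bookkeeping. The whole content of the lemma is that $[x_1,x_2x_3x_4]$ lies in the span of the degree-four consequences of $q_t$ (together with $P_4\cap\mathcal I_t$) whenever $t\notin\{0,\pm1\}$, and your proposal never establishes this. You say you ``expect'' a determinant with roots in $\{0,\pm1\}$ and that the coefficients $c_i(t)$ ``will'' have denominators among $t$, $t\pm1$. But no combination is exhibited, and no argument shows the system is solvable.

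The determinant heuristic is also the wrong criterion. The system is not square, and what must be shown is that one specific vector lies in a span, which is a rank comparison. Indeed, as you note yourself, for $t=1$ the target already lies in $\langle q_1\rangle^T$. So degeneration of some determinant at a given value of $t$ would say nothing by itself about membership.

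The missing idea is to control the whole $S_4$-submodule generated by the target. It suffices to show $c=[x_1,x_2x_3x_4]\in FS_4\tilde q_t$ with $\tilde q_t=q_t(x_1,x_2x_3,x_4)$; no elements of $\mathcal I_t$ and no other substitutions are needed.
\begin{enumerate}
\item Under $\sigma\mapsto x_{\sigma(1)}\cdots x_{\sigma(4)}$, $c$ corresponds to $1-\gamma$ with $\gamma=(1\,2\,3\,4)$. Hence $FS_4c\cong FS_4/\{a\mid a\gamma=a\}$.
\item That kernel is the trivial module induced from $\langle\gamma\rangle$, namely $S_{(4)}\oplus S_{(2,2)}\oplus S_{(2,1,1)}$. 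Therefore $FS_4c\cong 3S_{(3,1)}\oplus S_{(2,2)}\oplus 2S_{(2,1,1)}\oplus S_{(1^4)}$.
\item One checks $\tilde q_t\in FS_4c$ by writing it explicitly as $[x_1,x_2x_3x_4]+(1-t)[x_2,x_3x_4x_1]+(1-t)[x_3,x_4x_1x_2]+[x_1,x_4x_2x_3]+(1-t)[x_4,x_2x_3x_1]$.
\item Using essential idempotents $e_{T_\lambda}$, one shows that $FS_4\tilde q_t$ attains the full multiplicities $3,1,2,1$. This is exactly where the hypotheses $t\neq0,\pm1$ enter.
\item Equality of the two submodules gives $c\in FS_4\tilde q_t\subseteq\langle q_t\rangle^T$, and commuting with $x_5$ yields the lemma.
\end{enumerate}
Without this argument, or an explicitly displayed and verified linear combination, your proposal is a plan rather than a proof.
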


\begin{proof}
    Consider the multilinear polynomials $c = [x_1,x_2x_3x_4]$ and $\tilde{q_t}=q_t(x_1,x_2x_3,x_4)$. We shall prove that $FS_4\tilde{q_t} = FS_4c$. First, we show that $FS_4\tilde{q_t} \subseteq FS_4c$.
    
    Recall that the space $P_n$ is isomorphic to $FS_n$ as an $S_n$-module via the linear isomorphism
    $\sigma \mapsto x_{\sigma(1)}x_{\sigma(2)}\cdots x_{\sigma(n)}.$
    Considering $\gamma = (1\,2\,3\,4)$, it is clear that $FS_4c \cong FS_4(1-\gamma)$. 
    We now determine the dimension and the irreducible decomposition of $FS_4c$. Consider the $FS_4$-homomorphism
    $$ \varphi:FS_4\rightarrow FS_4, \qquad a\mapsto a(1-\gamma). $$ 
    Its image is $FS_4c$, whereas $\ker \varphi= \{a\in FS_4\mid a\gamma=a\}.$ Let $C_4=\langle\gamma\rangle$. Since $[S_4:C_4]=6,$ choose $g_1,\ldots,g_6\in S_4$ such that $S_4=\bigcup_{i=1}^6 g_iC_4.$  We claim that $\mathcal B=\left\{ g_i(1+\gamma+\gamma^2+\gamma^3) \;\middle|\; 1\leq i\leq 6 \right\}$ is a basis of $\ker \varphi$. It is clear that every element of $\mathcal B$ belongs to $\ker \varphi$. Now, let $a \in \ker \varphi$. Since $a\gamma=a$, comparing the coefficients gives $\alpha_\sigma=\alpha_{\sigma\gamma}$ for every $\sigma\in S_4$. Therefore, the coefficients of $a$ are constant on each coset $g_iC_4$, for $1\leq i\leq 6$. Hence,
\begin{align*}
a =\sum_{\sigma\in S_4}\alpha_\sigma\sigma = \sum_{i=1}^6\sum_{j=0}^3
\alpha_{g_i\gamma^j}g_i\gamma^j= 
\sum_{i=1}^6
\alpha_{g_i}g_i(1+\gamma+\gamma^2+\gamma^3).
\end{align*}
This proves that $\mathcal B$ spans $\ker \varphi$. Finally, since the cosets $g_iC_4$ are pairwise disjoint, the elements of $\mathcal B$ are linearly independent. Therefore, $\mathcal B$ is a basis of $\ker \varphi$ and consequently $\dim_F\ker \varphi=6.$

Let $F_{\mathrm{triv}}$ denote the trivial $FC_4$-module and recall that $F_{\mathrm{triv}} \uparrow S_4 =FS_4\otimes_{FC_4}F_{\mathrm{triv}}$. The basis $\mathcal B$ yields an $FS_4$-module isomorphism
$$
FS_4\otimes_{FC_4}F_{\mathrm{triv}}
\rightarrow\ker \varphi,
\qquad
g\otimes1\mapsto g(1+\gamma+\gamma^2+\gamma^3).
$$
Therefore, $ \ker \varphi \cong F_{\mathrm{triv}} \uparrow S_4$ as an $FS_4$-module and hence $\chi_{\ker \varphi}=\chi_{\text{triv}}\!\uparrow{S_4}.$ By Frobenius reciprocity, the multiplicity of $S_\lambda$ in
$\ker \varphi$ is
\begin{align*}
\dim_F (S_\lambda)^{C_4} = 
\frac{1}{4}
\left(
\chi_\lambda(id)
+2\chi_\lambda((1\,2\,3\,4))
+\chi_\lambda((1\,3)(2\,4))
\right).
\end{align*}
The character table of $S_4$ shows that $\ker \varphi \cong S_{(4)} \oplus S_{(2,2)} \oplus S_{(2,1,1)}. $ On the other hand, the regular $FS_4$-module decomposes as
\begin{equation}\label{equation_FS4}
FS_4\cong S_{(4)} \oplus 3S_{(3,1)} \oplus 2S_{(2,2)} \oplus 3S_{(2,1,1)} \oplus S_{(1^4)}.    
\end{equation}
Moreover, since $FS_4c=\operatorname{Im}\varphi \cong FS_4/\ker \varphi$, we conclude that
$FS_4c \cong 3S_{(3,1)} \oplus S_{(2,2)} \oplus 2S_{(2,1,1)} \oplus S_{(1^4)}.$

Now, for the inclusion $FS_4\tilde{q_t}\subseteq FS_4c$, it is sufficient to note that
\begin{align*}
\tilde{q_t}
={}&[x_1,x_2x_3x_4]
 +(1-t)[x_2,x_3x_4x_1]
 +(1-t)[x_3,x_4x_1x_2]
+[x_1,x_4x_2x_3]
 +(1-t)[x_4,x_2x_3x_1].
\end{align*}

Let $m_\lambda$ denote the multiplicity of $S_\lambda$ in $FS_4\widetilde q_t$. Since
$FS_4\widetilde q_t\subseteq FS_4c$, complete reducibility
and the above decomposition of $FS_4c$ give
\[
m_{(3,1)}\leq3,\qquad
m_{(2,2)}\leq1,\qquad
m_{(2,1,1)}\leq2,\qquad
m_{(1^4)}\leq1.
\]

We show that equality holds in all these inequalities. For each partition $\lambda\vdash4$, fix a standard tableau $T_\lambda$ and let
$$ e_{T_\lambda} = \left(\sum_{\rho\in R_{T_\lambda}}\rho\right) \left( \sum_{\tau\in C_{T_\lambda}} \operatorname{sgn}(\tau)\tau \right)$$
be the corresponding essential idempotent. Recall that, for every $FS_4$-module $M$,
$m_\lambda(M)=\dim_F e_{T_\lambda}M.$ We use this fact to compute the multiplicities of the irreducible constituents of $FS_4\tilde{q_t}$. First, consider $\lambda=(3,1)$ and take
$$
T_\lambda=
\begin{array}{|c|c|c|}
\hline
1&2&3\\ \hline
4&\multicolumn{2}{c}{}\\ \cline{1-1}
\end{array}.
$$
It follows that
$ e_{T_\lambda} = \left(\sum_{\rho\in S_{3}}\rho\right)(1-(14)).$ Set
$$
v_1=e_{T_\lambda}\tilde{q_t},\qquad
v_2=e_{T_\lambda}(12)\tilde{q_t},\qquad
v_3=e_{T_\lambda}(13)\tilde{q_t},
$$
and suppose that $\lambda_1v_1+\lambda_2v_2+\lambda_3v_3=0.$
Comparing the coefficients of $x_1x_2x_3x_4$,
$x_4x_1x_2x_3$ and $x_1x_2x_4x_3$ respectively, we obtain
$$3t\lambda_1+t\lambda_2=0, \quad
-3\lambda_1-\lambda_3=0 \quad \text{and} \quad (1-2t)\lambda_1-2t\lambda_2-\lambda_3=0.
$$
Since $t\neq0$, the first two equations give
$\lambda_2=\lambda_3=-3\lambda_1.$ Substituting these values into the third equation yields $4(t+1)\lambda_1=0.$ Since $t\neq-1$, we obtain $\lambda_1=\lambda_2=\lambda_3=0.$ Thus, $v_1,v_2,v_3$ are linearly independent and hence $m_{(3,1)}=3.$

Next, consider $\lambda=(2,2)$ and take
$$
T_\lambda=
\begin{array}{|c|c|}
\hline
1&2\\ \hline
3&4\\ \hline
\end{array}
$$
In this case, $e_{T_\lambda} = (1+(12))(1+(34))(1-(13))(1-(24)).$ Since the coefficient of $x_1x_2x_3x_4$ in $e_{T_\lambda}(12)\tilde{q_t}$ is $t+1$ and $t\neq-1$, we obtain $e_{T_\lambda}(12)\tilde{q_t}\neq0.$ Therefore, $m_{(2,2)}=1.$

Now, consider $\lambda=(2,1,1)$ and take
$$
T_\lambda=
\begin{array}{|c|c|}
\hline
1&2\\ \hline
3&\multicolumn{1}{c}{}\\ \cline{1-1}
4&\multicolumn{1}{c}{}\\ \cline{1-1}
\end{array}
$$
Let $v_1=e_{T_\lambda}(12)\tilde{q_t}$ and $v_2=e_{T_\lambda}(24)\tilde{q_t}$ and suppose that $\lambda_1v_1+\lambda_2v_2=0.$ Comparing the coefficients of $x_1x_2x_3x_4$ and $x_1x_3x_2x_4$ respectively, we have
$$
(t+1)\lambda_1=0 \quad\text{and}\quad (t-3)\lambda_1+(t-1)\lambda_2=0.
$$
Since $t\neq-1$ and $t\neq 1$, we obtain $\lambda_1=\lambda_2=0$. Thus, $v_1,v_2$ are
linearly independent, which yields
$m_{(2,1,1)}=2.$

Finally, consider $\lambda=(1^4)$. In this case,
$e_{T_\lambda}\tilde{q_t}=
2(t+1)\operatorname{St}_4(x_1,x_2,x_3,x_4),$ and since $t\neq-1$, this polynomial is nonzero. Therefore, $m_{(1^4)}=1.$

Since $FS_4\tilde{q_t}\cong 3S_{(3,1)} \oplus S_{(2,2)} \oplus 2S_{(2,1,1)} \oplus S_{(1^4)}$ and
$FS_4\tilde{q_t}\subseteq FS_4c$, we conclude that $FS_4\tilde{q_t}=FS_4c.$ In particular, there exist scalars $\lambda_\sigma\in F$, for $\sigma\in S_4$, such that
$c=\sum_{\sigma\in S_4}\lambda_\sigma\,\sigma \tilde{q_t}.$ It follows that
$$[c,x_5] = \sum_{\sigma \in S_4}\lambda_\sigma[\sigma \tilde{q_t},x_5].$$
Now, for each $\sigma\in S_4$, consider the endomorphism defined by
$x_1\mapsto x_{\sigma(1)}$,
$x_2\mapsto x_{\sigma(2)}x_{\sigma(3)}$,
$x_3\mapsto x_{\sigma(4)}$ and $
x_4\mapsto x_5,$ which gives
$[q_t(x_1,x_2,x_3),x_4]\mapsto[\sigma\tilde{q_t},x_5]$.
Recall that $\mathcal{I}_t$ is the $T$-ideal generated by
$[q_t(x_1,x_2,x_3),x_4]$. It follows that
$[\sigma\tilde{q_t},x_5]\in\mathcal{I}_t$ for every $\sigma\in S_4$.
Therefore, $[[x_1,x_2x_3x_4],x_5]=[c,x_5]\in\mathcal{I}_t,$ which completes the proof. \end{proof}

The preceding identity also yields the centrality of
$[x_1,x_2,x_3]x_4$ modulo $\mathcal I_t$.

\begin{lemma}\label{lem:generic-triple-context}
For every $t\notin \{0,1,-1\}$ we have  $[[x_1,x_2,x_3]x_4,x_5]\in\mathcal I_t$.
\end{lemma}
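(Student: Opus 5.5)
We want $[[x_1,x_2,x_3]x_4,x_5]\in\mathcal I_t$. By Lemma~\ref{lem:generic-long-commutator}, $[[x_1,x_2x_3x_4],x_5]\in\mathcal I_t$. Since $\mathcal I_t$ is a $T$-ideal, every endomorphic image of this polynomial also lies in $\mathcal I_t$. So every polynomial $[f,g]$ with $f,g$ monomials and $\deg g\geq 3$ is central modulo $\mathcal I_t$. Repeating the argument of Corollary~\ref{cor:commutators-central} verbatim, via $[f_1f_2,g_1g_2]=[f_1,f_2g_1g_2]+[f_2,g_1g_2f_1]$, the same holds whenever $\deg f+\deg g\geq4$. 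In particular
\[
[[u,v],x_5]\in\mathcal I_t \quad\text{whenever } \deg u+\deg v\geq 4 .
\]

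The plan is to express $[x_1,x_2,x_3]x_4$, modulo polynomials that are central modulo $\mathcal I_t$, as something we can control. Write $w=[x_1,x_2]$. Expanding $[x_1,x_2,x_3]x_4$ gives
\[
[x_1,x_2,x_3]x_4=[w,x_3x_4]-x_3[w,x_4].
\]
The term $[w,x_3x_4]$ is a commutator of total degree $4$, hence central modulo $\mathcal I_t$. It therefore suffices to show that $x_3[[x_1,x_2],x_4]$ is central modulo $\mathcal I_t$.

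Symmetrically, $x_3[w,x_4]=[x_3w,x_4]-[x_3,x_4]w$, and $[x_3w,x_4]$ is again a commutator of total degree $4$. This reduces the problem to showing that $[x_3,x_4][x_1,x_2]$ is central modulo $\mathcal I_t$. It also shows that $[x_1,x_2,x_3]x_4\equiv[x_3,x_4][x_1,x_2]$ modulo the central part. For this I would use $q_t$ directly, which is central modulo $\mathcal I_t$, together with its endomorphic images. Substitute $x_1\mapsto x_1$, $x_2\mapsto[x_3,x_4]$, $x_3\mapsto x_2$ and obtain
\[
[x_1,[x_3,x_4]]x_2+[x_1,x_2][x_3,x_4]+t\bigl([x_3,x_4][x_1,x_2]+x_2[x_1,[x_3,x_4]]\bigr).
\]
The terms $[x_1,[x_3,x_4]]x_2$ and $x_2[x_1,[x_3,x_4]]$ have the shape $[\,\cdot,\cdot,\cdot\,]x$ and $x[\,\cdot,\cdot,\cdot\,]$. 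By the reduction above, each of them is congruent to a product of two commutators modulo central elements. This should give a linear system, with coefficients polynomial in $t$, among the products $[x_a,x_b][x_c,x_d]$ modulo central elements. I also intend to feed into it the relation $[u,v][y,z]\equiv[y,z][u,v]$, which holds because $[[u,v],[y,z]]$ has degree $4$. Once that determinant is shown to be nonzero for $t\notin\{0,\pm1\}$, each product $[x_a,x_b][x_c,x_d]$ is central modulo $\mathcal I_t$, and the lemma follows.

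The main obstacle is this last step: setting up the linear system and checking that its determinant is nonzero exactly when $t(t^2-1)\neq0$. If the relations from $q_t$ alone are insufficient, I would fall back on the method of Lemma~\ref{lem:generic-long-commutator}. That means computing, inside $P_4$, the $FS_4$-module generated by the degree-four consequences of $q_t$ together with $[x_1,x_2x_3x_4]$, and showing it contains $[x_1,x_2,x_3]x_4$. This would be a multiplicity count via essential idempotents, using the conditions $t\neq0,\pm1$. Commuting with $x_5$ would then finish the proof.
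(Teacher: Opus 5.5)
You correctly reduce the problem to showing that $[x_3,x_4][x_1,x_2]$ is central modulo $\mathcal I_t$: you get $[x_1,x_2,x_3]x_4\equiv[x_3,x_4][x_1,x_2]$ modulo such central elements. The gap is that final step, and neither of your proposed methods can close it. Let $C=[F\langle X\rangle,F\langle X\rangle]$ be the span of all commutators. It is a $T$-space. It contains $[x_1,x_2x_3x_4]$, and it contains $q_t$, since $q_t$ is a linearization of $\alpha[x_1,x_2x_1]+\beta[x_1,x_1x_2]$ (note $[x_1,x_2]x_1=[x_1,x_2x_1]$ and $x_1[x_1,x_2]=[x_1,x_1x_2]$). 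Hence every degree-four relation you plan to use lies in $C$: substitutions of $q_t$, commutators of total degree four, and $[u,v][y,z]-[y,z][u,v]$. But $[x_3,x_4][x_1,x_2]\notin C$. Modulo $C$, i.e. in cyclic words, it equals $x_1x_2x_3x_4-x_1x_3x_4x_2-x_1x_2x_4x_3+x_1x_4x_3x_2\neq0$, and the same holds for $[x_1,x_2,x_3]x_4$. So no linear system built from such relations can isolate the product, whatever $t$ is. Indeed, the substitution you wrote collapses, by your own congruences, to $t[[x_3,x_4],[x_1,x_2]]\equiv0$, which carries no information. For the same reason your fallback claim is false: the $FS_4$-module generated by the degree-four consequences of $q_t$ and $[x_1,x_2x_3x_4]$ lies in $C$, so it does not contain $[x_1,x_2,x_3]x_4$. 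The needed centrality is genuinely a degree-five fact and must use the extra variable $x_5$.

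The missing step is short and does not involve $t$:
\[
[[x_1,x_2][x_3,x_4],x_5]=[[x_1,x_2],[x_3,x_4]x_5]+[[x_3,x_4],x_5[x_1,x_2]].
\]
By Jacobi, $[[y,z],P]=[[y,P],z]-[[z,P],y]$. For $P=[x_3,x_4]x_5$ or $P=x_5[x_1,x_2]$, each $[[y,P],z]$ is a difference of two permuted copies of $[[x_1,x_2x_3x_4],x_5]$, hence lies in $\mathcal I_t$ by Lemma~\ref{lem:generic-long-commutator}. After renaming variables, the same holds for $[x_3,x_4][x_1,x_2]$. Combined with your decomposition $[x_1,x_2,x_3]x_4=[w,x_3x_4]-[x_3w,x_4]+[x_3,x_4]w$, this completes the proof. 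This is exactly what the paper's ten-term identity encodes. It uses $[x_1,x_2,x_3]x_4=[x_1,x_2][x_3,x_4]-[x_3,[x_1,x_2]x_4]$ and expands $[[x_1,x_2][x_3,x_4],x_5]$ by Jacobi into copies of $[[x_1,x_2x_3x_4],x_5]$. All dependence on $t$ is thus confined to Lemma~\ref{lem:generic-long-commutator}.
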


\begin{proof}
Note that
$$
\begin{aligned}
[[x_1,x_2,x_3]x_4,x_5] &= [[x_1,x_3x_4x_5],x_2] - [[x_1,x_4x_3x_5],x_2] - [[x_2,x_3x_4x_5],x_1] + [[x_2,x_4x_3x_5],x_1] \\
& - [[x_3,x_1x_2x_4],x_5] + [[x_3,x_2x_1x_4],x_5] + [[x_3,x_5x_1x_2],x_4] - [[x_3,x_5x_2x_1],x_4]\\
& - [[x_4,x_5x_1x_2],x_3] + [[x_4,x_5x_2x_1],x_3].
\end{aligned}
$$

Therefore, by Lemma~\ref{lem:generic-long-commutator}, $[[x_1,x_2,x_3]x_4,x_5]\in \mathcal{I}_{t}$. \end{proof}

Let $$I
=
\left\langle
[x_1,x_2x_3x_4],
\,
[x_1,x_2,x_3]x_4
\right\rangle^T.$$ Since $\mathcal I_t\subseteq\Id(A)$,
Lemmas~\ref{lem:generic-long-commutator}
and~\ref{lem:generic-triple-context} give
$I\subseteq\Id^z(A)$.
Using the description
$\Id^z(\mathcal G)=T^{(3)}+\langle[x_1,x_2]\rangle^T$
from Lemma~\ref{lem:grassmann-central}, we obtain the following
inclusion.

\begin{proposition}\label{prop:generic-Grassmann-inclusion}
For every $n\geq4$, we have
$P_n\cap\operatorname{Id}^z(\mathcal G)
\subseteq P_n\cap\operatorname{Id}^z(A).$

\end{proposition}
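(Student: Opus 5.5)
We show that the multilinear part of $\operatorname{Id}^z(\mathcal G)=T^{(3)}+\langle[x_1,x_2]\rangle^T$ in degree $n\geq4$ lies in the $T$-space
\[
I=\left\langle [x_1,x_2x_3x_4],\,[x_1,x_2,x_3]x_4\right\rangle^T ,
\]
which is already known to be contained in $\operatorname{Id}^z(A)$ by Lemmas~\ref{lem:generic-long-commutator} and~\ref{lem:generic-triple-context}. Every element of $P_n\cap\operatorname{Id}^z(\mathcal G)$ is a linear combination of multilinear components of endomorphic images of $[x_1,x_2]$ and of polynomials of the form $p[f,g,h]q$, with $f,g,h$ nonempty words and $p,q$ possibly absent. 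Since $I$ is a $T$-space, it suffices to show that each such multilinear polynomial of degree $n\ge 4$ lies in $I$.

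\emph{Commutators.} Consider a multilinear $[u,v]$ with $u,v$ monomials and $\deg u+\deg v=n\geq4$. If $\deg v\geq3$, write $v=v_1v_2v_3$; then $[u,v]$ is an image of $[x_1,x_2x_3x_4]$. The case $\deg u\ge 3$ follows by antisymmetry. If $\deg u=\deg v=2$, use, as in Corollary~\ref{cor:commutators-central},
\[
[f_1f_2,g_1g_2]=[f_1,f_2g_1g_2]+[f_2,g_1g_2f_1],
\]
so both terms are images of $[x_1,x_2x_3x_4]$. Hence every degree $n\geq4$ commutator of monomials, and so every multilinear component of an image of $[x_1,x_2]$, lies in $I$. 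In particular $[w_1,w_2,w_3]\in I$ whenever the total degree is at least $4$, because $[w_1,w_2,w_3]=[[w_1,w_2],w_3]$ expands into commutators $[\text{monomial},w_3]$ of degree $n$.

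\emph{Elements of $T^{(3)}$.} Let $c=[f,g,h]$. If $q$ is present, then $cq$ is an endomorphic image of $[x_1,x_2,x_3]x_4$, so it lies in $I$. The key reduction concerns a nonempty prefix $p$: write
\[
pcq=[p,cq]+cqp ,
\]
or, if $q$ is absent, $pc=[p,c]+cp$. The commutator term has degree $n\geq4$ and lies in $I$ by the previous paragraph. The remaining term $cqp$ (or $cp$) is an image of $[x_1,x_2,x_3]x_4$, with $x_4\mapsto qp$. Finally, if both $p$ and $q$ are absent, then $c$ itself is a commutator of degree $n\ge4$, already handled.

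The only delicate point is this prefix case: $I$ is not an ideal, so multiplication on the left cannot be absorbed directly. The identity $pX=[p,X]+Xp$ moves the prefix to the right, where the generator $[x_1,x_2,x_3]x_4$ absorbs it. The resulting commutator $[p,X]$ then needs total degree at least four, which holds exactly because $n\geq4$. Since $I\subseteq\operatorname{Id}^z(A)$, this gives the claimed inclusion. It also explains why the statement is restricted to $n\geq4$: in degree three, the generators of $I$ are unavailable.
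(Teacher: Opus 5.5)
Your proof is correct and follows the paper's argument. You reduce to the $T$-space $I=\left\langle [x_1,x_2x_3x_4],\,[x_1,x_2,x_3]x_4\right\rangle^T$, show that multilinear commutators of total degree at least four are consequences of $[x_1,x_2x_3x_4]$, and move a nonempty prefix to the right via $p[f,g,h]q=[p,[f,g,h]q]+[f,g,h]qp$. The only cosmetic difference is in the commutator step: you use the case split of Corollary~\ref{cor:commutators-central}, whereas the paper applies $[x_1x_2,x_3]=[x_1,x_2x_3]+[x_2,x_3x_1]$ repeatedly to reach terms $[x_i,h]$ with $\deg h=n-1\geq3$.
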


\begin{proof}
By the previous discussion, it is sufficient to show that $P_n \cap \operatorname{Id}^z(\mathcal{G}) \subseteq P_n \cap I$.

First, note that $P_n \cap \langle [x_1, x_2] \rangle^T$ is spanned by multilinear commutators $[f, g]$, where $f$ and $g$ are nonempty monomials with $\deg f + \deg g = n \ge 4$. Using the identity $[x_1 x_2, x_3] = [x_1, x_2 x_3] + [x_2, x_3 x_1]$ repeatedly, any such commutator can be expressed as a linear combination of terms of the form $[x_i, h]$, where $h$ is a monomial of degree $n-1$. Since $n\geq4$, each such monomial $h$ has degree at least
three. Hence every term $[x_i,h]$ is a consequence of $[x_1,x_2x_3x_4]$. Therefore, $[f,g]\in I$. It follows that
$$
P_n \cap \langle [x_1, x_2] \rangle^T \subseteq P_n \cap  I.
$$

Next, we consider $P_n \cap T^{(3)}$. This space is spanned by polynomials of the form $p[f, g, h]q$, where $f, g, h$ are nonempty multilinear monomials and $p, q$ are possibly absent. 
If both $p$ and $q$ are absent, then
$[f,g,h]=[fg,h]-[gf,h]\in I$ by the first part of the proof. Moreover, if $p$ is empty and $q$ is nonempty, then $[f,g,h]q$ is an evaluation of $[x_1, x_2, x_3]x_4 \in  I$. 
Finally, if $p$ is nonempty, note that 
$$
p[f,g,h]q = [f,g,h]qp + [p, [f,g,h]q].
$$
Since $[p, [f,g,h]q]$ expands into a linear combination of commutators of degree $n \ge 4$, both terms belong to $ I$. Thus, $p[f,g,h]q\in I$. Therefore, for $n \geq 4$, we conclude $P_n \cap \operatorname{Id}^z(\mathcal{G}) \subseteq P_n \cap  I.$ \end{proof}

Combining this inclusion with the criterion in \eqref{eq:degree-three-central}, we obtain the desired bound on the central multiplicities.

\begin{theorem}\label{thm:generic-multiplicity-one}
Let $A$ be an associative algebra over a field $F$ of
characteristic zero. If $\alpha[x_1,x_2]x_1+\beta x_1[x_1,x_2]\in\operatorname{Id}^z(A)$ for some $\alpha,\beta\in F$ satisfying $\alpha\beta(\alpha+\beta)(\alpha-\beta)\neq0$,
then $m_\lambda^z(A)\leq1$ for every $n\geq1$
and every partition $\lambda\vdash n$.
\end{theorem}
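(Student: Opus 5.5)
The argument follows the pattern of Theorems~\ref{thm:square-central} and~\ref{central-triple-multiplicities}: we treat the low degrees directly and, in degrees $n\geq4$, compare the central cocharacter of $A$ with that of $\mathcal G$ through Proposition~\ref{prop:generic-Grassmann-inclusion}.

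\emph{Degrees $n\leq3$.} For $n\in\{1,2\}$ there is nothing to prove, since $P_1$ and $P_2$ are multiplicity-free and hence so are their quotients $P_1^z(A)$ and $P_2^z(A)$. For $n=3$, the only partition whose multiplicity in $P_3$ exceeds one is $\lambda=(2,1)$. The hypothesis
\[
\alpha[x_1,x_2]x_1+\beta x_1[x_1,x_2]\in\Id^z(A),\qquad (\alpha,\beta)\neq(0,0),
\]
gives $m_{(2,1)}^z(A)\leq1$ by the criterion~\eqref{eq:degree-three-central}. The other two constituents of $P_3$, namely $S_{(3)}$ and $S_{(1,1,1)}$, already occur in $P_3$ with multiplicity one.

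\emph{Degrees $n\geq4$.} Here the condition $\alpha\beta(\alpha+\beta)(\alpha-\beta)\neq0$ says exactly that $t=\beta/\alpha\notin\{0,1,-1\}$. This is the hypothesis of Lemmas~\ref{lem:generic-long-commutator} and~\ref{lem:generic-triple-context}, which therefore give
\[
I=\left\langle[x_1,x_2x_3x_4],\,[x_1,x_2,x_3]x_4\right\rangle^T\subseteq\Id^z(A),
\]
using $\mathcal I_t\subseteq\Id(A)$. Proposition~\ref{prop:generic-Grassmann-inclusion} then yields
\[
P_n\cap\Id^z(\mathcal G)\subseteq P_n\cap\Id^z(A)\qquad\text{for every } n\geq4.
\]
This inclusion induces a surjective $S_n$-module homomorphism $P_n^z(\mathcal G)\to P_n^z(A)$. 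By complete reducibility, $m_\lambda^z(A)\leq m_\lambda^z(\mathcal G)$ for every $\lambda\vdash n$. Lemma~\ref{lem:grassmann-central} shows that
\[
\chi_n^z(\mathcal G)=\sum_{j=0}^{\lfloor (n-1)/2\rfloor}\chi_{(n-2j,1^{2j})}
\]
is multiplicity-free, so $m_\lambda^z(A)\leq1$. Alternatively, one can compare with the ordinary cocharacter, using $\Id(\mathcal G)\subseteq\Id^z(\mathcal G)$ together with $m_\lambda(\mathcal G)\leq1$.

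\emph{Where the difficulty lies.} At this stage there is no real obstacle: all the substantive work is already contained in Lemma~\ref{lem:generic-long-commutator}, in the module computation showing $FS_4\tilde q_t=FS_4c$, and in Proposition~\ref{prop:generic-Grassmann-inclusion}. The one point that needs care is the passage from the inclusion of multilinear components to the inequality of multiplicities. This requires the quotient map to be a homomorphism of $S_n$-modules, which holds because both $\Id^z(\mathcal G)$ and $\Id^z(A)$ are $T$-spaces and are therefore stable under permutations of the variables.
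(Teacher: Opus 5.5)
Your proposal is correct and follows the paper's proof essentially verbatim. Degrees $n\leq 3$ are handled by the degree-three criterion, and for $n\geq 4$ the inclusion $P_n\cap\Id^z(\mathcal G)\subseteq P_n\cap\Id^z(A)$ (obtained via $t=\beta/\alpha\notin\{0,1,-1\}$ and the two preceding lemmas) yields $m_\lambda^z(A)\leq m_\lambda^z(\mathcal G)\leq 1$. Your alternative comparison with the ordinary cocharacter of $\mathcal G$ is also valid, but it is not needed.
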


\begin{proof}
For $n\in\{1,2,3\}$, the conclusion follows as in the previous cases. Moreover, for $n\geq 4$, by Proposition~\ref{prop:generic-Grassmann-inclusion}, we obtain $P_n\cap\operatorname{Id}^z(\mathcal G)
\subseteq P_n\cap\operatorname{Id}^z(A)$. Consequently, $m_\lambda^z(A)\leq m_\lambda^z(\mathcal G)\leq1$ for every $\lambda\vdash n$. \end{proof}

\subsection{The main result}

We now combine the preceding cases with the criterion in \eqref{eq:degree-three-central} to obtain a central analogue of the Ananin-Kemer theorem. The characterization below shows that the multiplicity-one condition for the entire sequence of central
cocharacters is determined in degree three: the single inequality $m_{(2,1)}^z(A)\leq1$ already forces all central cocharacter multiplicities to be bounded by one.

\begin{theorem}\label{thm:main-central-multiplicity-one}
Let $A$ be an associative algebra over a field $F$ of characteristic zero and let
\[
\chi_n^z(A)=\sum_{\lambda\vdash n}m_\lambda^z(A)\chi_\lambda
\]
be its $n$th central cocharacter. Then $m_\lambda^z(A)\leq1$
for every $n\geq1$ and every partition $\lambda\vdash n$ if and
only if there exist $\alpha,\beta\in F$, not both zero, such that $\alpha[x_1,x_2]x_1+\beta x_1[x_1,x_2]
\in\operatorname{Id}^z(A).$
\end{theorem}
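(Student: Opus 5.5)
The proof assembles the results already established in this section. We argue the two implications separately.

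For necessity, assume $m_\lambda^z(A)\leq1$ for all $n$ and $\lambda\vdash n$. In particular $m_{(2,1)}^z(A)\leq1$. By the discussion preceding~\eqref{eq:degree-three-central}, $e_{T_\lambda}P_3^z(A)$ is spanned by the images of $f$ and $g$, so these images are linearly dependent. Hence $\alpha f+\beta g\in\Id^z(A)$ for some $(\alpha,\beta)\neq(0,0)$. Specializing $x_3=x_1$ and dividing by $2$, which is allowed since $\operatorname{char}F=0$, gives $\alpha[x_1,x_2]x_1+\beta x_1[x_1,x_2]\in\Id^z(A)$. This is exactly criterion~\eqref{eq:degree-three-central}.

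For sufficiency, fix $(\alpha,\beta)\neq(0,0)$ with $\alpha[x_1,x_2]x_1+\beta x_1[x_1,x_2]\in\Id^z(A)$ and split according to the factorization of $\alpha\beta(\alpha+\beta)(\alpha-\beta)$.
\begin{itemize}
\item If $\alpha=0$, then $\beta\neq0$. Dividing by $-\beta$ gives $x_1[x_2,x_1]\in\Id^z(A)$, and Theorem~\ref{thm:alpha-zero-central} applies.
\item If $\beta=0$, then $\alpha\neq0$ and $[x_1,x_2]x_1\in\Id^z(A)$, so Theorem~\ref{thm:beta-zero-central} applies.
\item If $\alpha=\beta\neq0$, the polynomial is $\alpha[x_1^2,x_2]$, and Theorem~\ref{thm:square-central} applies.
\item If $\alpha=-\beta\neq0$, the polynomial is $\alpha\bigl([x_1,x_2]x_1-x_1[x_1,x_2]\bigr)=\alpha[x_1,x_2,x_1]$, and Theorem~\ref{central-triple-multiplicities} applies.
\item In the remaining case $\alpha\beta(\alpha+\beta)(\alpha-\beta)\neq0$, Theorem~\ref{thm:generic-multiplicity-one} applies directly.
\end{itemize}
In each case the conclusion is $m_\lambda^z(A)\leq1$ for every $n$ and every $\lambda\vdash n$.

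The cases overlap harmlessly, and together they cover every pair $(\alpha,\beta)\neq(0,0)$: if $\alpha\beta(\alpha+\beta)(\alpha-\beta)=0$, one of the four linear factors vanishes. There is no genuine obstacle at this stage, since all the substantive work lies in the earlier theorems. The only points needing care are the sign and scalar normalizations, which rely on $\Id^z(A)$ being a vector space, and the fact that $\alpha$ and $\beta$ cannot both vanish. The latter guarantees that in each special case the surviving coefficient is nonzero and can be divided out.
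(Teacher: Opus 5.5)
Your proposal is correct and follows the paper's proof step for step. Necessity comes from $m_{(2,1)}^z(A)\leq1$ via the degree-three criterion~\eqref{eq:degree-three-central}, and sufficiency is the same case split on which of $\alpha$, $\beta$, $\alpha-\beta$, $\alpha+\beta$ vanishes (or none), invoking Theorems~\ref{thm:alpha-zero-central}, \ref{thm:beta-zero-central}, \ref{thm:square-central}, \ref{central-triple-multiplicities} and~\ref{thm:generic-multiplicity-one}, with the sign normalizations (such as rewriting to $x_1[x_2,x_1]$ when $\alpha=0$) handled correctly.
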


\begin{proof}
Suppose first that $m_\lambda^z(A)\leq1$ for every $n\geq1$
and every partition $\lambda\vdash n$. In particular,
$m_{(2,1)}^z(A)\leq1$ and the existence of the required
central polynomial follows from the criterion
in~\eqref{eq:degree-three-central}. Conversely, suppose that
$\alpha[x_1,x_2]x_1+\beta x_1[x_1,x_2]
\in\operatorname{Id}^z(A)$,
where $(\alpha,\beta)\neq(0,0)$.

If $\alpha=0$, then $\beta\neq0$ and
$x_1[x_1,x_2]\in\operatorname{Id}^z(A)$.
The conclusion follows from Theorem~\ref{thm:alpha-zero-central}.
If $\beta=0$, then $\alpha\neq0$ and
$[x_1,x_2]x_1\in\operatorname{Id}^z(A)$.
In this case, Theorem~\ref{thm:beta-zero-central} gives the result.

Assume now that $\alpha\beta\neq0$. If $\alpha=\beta$, then
$[x_1^2,x_2]\in\operatorname{Id}^z(A)$ and the conclusion
follows from Theorem~\ref{thm:square-central}.
If $\alpha=-\beta$, then
$[x_1,x_2,x_1]\in\operatorname{Id}^z(A)$,
and Theorem~\ref{central-triple-multiplicities} gives the result.

The remaining case
$\alpha\beta(\alpha+\beta)(\alpha-\beta)\neq0$
is covered by Theorem~\ref{thm:generic-multiplicity-one}.\end{proof}

The following example shows that all central cocharacter multiplicities of an algebra may be bounded by one even when some ordinary cocharacter multiplicity is greater than one.

\begin{example}
 According to 
Lemma~\ref{prop:central-polynomials-G-A1star}, all central cocharacter multiplicities of $\mathcal G\oplus A_1^*$ are bounded by one. On the other hand, it is straightforward to see that $m_{(2,1)}(\mathcal G\oplus A_1^*)=2$.
\end{example}

\vspace{0.2cm}
 \textbf{Declaration of generative AI and AI-assisted technologies in the manuscript preparation process}
\vspace{0.2cm}

During the preparation of this work, the authors used ChatGPT (OpenAI) in order to help identify relevant references, improve the language and readability of the manuscript, and assist in checking proofs and algebraic computations. After using this tool/service, the authors reviewed and edited the content as needed and take full responsibility for the content of the published article.

\end{document}